\newtheorem{thm}{Theorem}[subsection]
\newtheorem{lem}[thm]{Lemma}
\newtheorem{cor}[thm]{Corollary}
\newtheorem{rmk}[thm]{Remark}
\newtheorem{ntn}[thm]{Notation}
\newtheorem{dfn}[thm]{Definition}
\newcommand{\R}{\mathbbm{R}}
\newcommand{\Z}{\mathbbm{Z}}
\newcommand{\filt}{\mathcal}
\newcommand{\indic}{\mathbbm1}
\newcommand{\N}{\mathbbm N}
\newcommand{\Nnull}{{\mathbbm N_0}}
\renewcommand{\P}{\mathbbm{P}}
\newcommand{\calZ}{\mathcal Z}
\newdimen\wcx
\newsavebox\wccbox
\newsavebox\wcabox
\def\widecheck#1{\mathchoice{
\sbox\wccbox{\ensuremath{\displaystyle#1}}%
\sbox\wcabox{\raisebox{1pt}{\smash{\rotatebox[origin=c]{180}{$\displaystyle\widehat{\mbox{\vrule height 0pt width \wd\wccbox}}$}}}}%
\wcx=\ht\wccbox%
\divide\wcx by 4%
\advance\wcx by \wd\wccbox%
\advance\wcx by -\wd\wcabox%
\divide\wcx by 2%
\hbox to 0pt{\hskip\wcx\raisebox{\ht\wccbox}{\usebox\wcabox}\hss}%
\usebox\wccbox}{%
\sbox\wccbox{\ensuremath{\textstyle#1}}%
\sbox\wcabox{\raisebox{0.5pt}{\smash{\rotatebox[origin=c]{180}{$\textstyle\widehat{\mbox{\vrule height 0pt width \wd\wccbox}}$}}}}%
\wcx=\ht\wccbox%
\divide\wcx by 4%
\advance\wcx by \wd\wccbox%
\advance\wcx by -\wd\wcabox%
\divide\wcx by 2%
\hbox to 0pt{\hskip\wcx\raisebox{\ht\wccbox}{\usebox\wcabox}\hss}%
\sbox\wccbox{\ensuremath{\textstyle#1}}%
\usebox\wccbox}{%
\sbox\wccbox{\ensuremath{\scriptstyle#1}}%
\sbox\wcabox{\raisebox{0.3pt}{\smash{\rotatebox[origin=c]{180}{$\scriptstyle\widehat{\mbox{\vrule height 0pt width \wd\wccbox}}$}}}}%
\wcx=\ht\wccbox%
\divide\wcx by 4%
\advance\wcx by \wd\wccbox%
\advance\wcx by -\wd\wcabox%
\divide\wcx by 2%
\hbox to 0pt{\hskip\wcx\raisebox{\ht\wccbox}{\usebox\wcabox}\hss}%
\usebox\wccbox}{%
\sbox\wccbox{\ensuremath{\scriptscriptstyle#1}}%
\sbox\wcabox{\raisebox{0.2pt}{\smash{\rotatebox[origin=c]{180}{$\scriptscriptstyle\widehat{\mbox{\vrule height 0pt width \wd\wccbox}}$}}}}%
\wcx=\ht\wccbox%
\divide\wcx by 4%
\advance\wcx by \wd\wccbox%
\advance\wcx by -\wd\wcabox%
\divide\wcx by 2%
\hbox to 0pt{\hskip\wcx\raisebox{\ht\wccbox}{\usebox\wcabox}\hss}%
\usebox\wccbox}
}
\newlength{\ruck}
\newlength{\vorr}
\DeclareMathOperator*{\bigcupdot}{\mathchoice{%
			\setlength{\ruck}{\widthof{\LARGE$\cup$}*\real{-0.5}+\widthof{\LARGE$^\cdot$}*\real{-0.5}}%
			\setlength{\vorr}{\ruck*\real{-1}+\widthof{\LARGE$^\cdot$}*\real{-1}}%
			\raisebox{-0.5ex}{\hbox{\LARGE{$\cup\hspace{\ruck}^\cdot\hspace{\vorr}$}}}}{%
			\setlength{\ruck}{\widthof{\Large$\cup$}*\real{-0.5}+\widthof{\Large$^\cdot$}*\real{-0.5}}%
			\setlength{\vorr}{\ruck*\real{-1}+\widthof{\Large$^\cdot$}*\real{-1}}%
			\vphantom{\sum}\smash{\raisebox{-0.35ex}{\hbox{\Large{$\cup\hspace{\ruck}^\cdot\hspace{\vorr}$}}}}}{%
			\setlength{\ruck}{\widthof{\normalsize$\cup$}*\real{-0.5}+\widthof{\normalsize$^\cdot$}*\real{-0.5}}%
			\setlength{\vorr}{\ruck*\real{-1}+\widthof{\normalsize$^\cdot$}*\real{-1}}%
			\vphantom{\sum}\smash{\raisebox{-0.2ex}{\hbox{\normalsize{$\cup\hspace{\ruck}^\cdot\hspace{\vorr}$}}}}}{%
			\setlength{\ruck}{\widthof{\footnotesize$\cup$}*\real{-0.5}+\widthof{\footnotesize$^\cdot$}*\real{-0.5}}%
			\setlength{\vorr}{\ruck*\real{-1}+\widthof{\footnotesize$^\cdot$}*\real{-1}}%
			\vphantom{\sum}\smash{\raisebox{-0.2ex}{\hbox{\footnotesize{$\cup\hspace{\ruck}^\cdot\hspace{\vorr}$}}}}}%
}%
\newlength{\pfeilbreite}
\newcommand{\Bijmapp}{\setlength{\pfeilbreite}{\widthof{\normalsize$\nearrow$}}\nearrow\hspace{-\pfeilbreite}\swarrow\!}%
\newcommand{\Bijmap}{\setlength{\pfeilbreite}{\widthof{\normalsize$\nearrow$}*\real{0.8}}\nearrow\hspace{-\pfeilbreite}\swarrow}%
\newcommand{\RL}[2][]{#1\lfloor\frac{#2}2#1\rfloor}
\newcommand{\FF}[2][]{#1\lfloor#2\rfloor}
\newcommand{\SL}[2][]{#2\bmod2}
\DeclareMathOperator{\UR}{\Rsh\!}
\newcommand{\plannedwidth}{\beta}
\newcommand{\ROO}{\lambda}
\DeclareMathOperator{\levelof}{\ell}
\newcommand{\lunten}{\underline \levelof}
\newcommand{\loben}{\overline \levelof}
\newcommand{\closure}{\overline}
\newcommand{\configx}{\bar{\mathbf x}}
\newcommand{\configX}{\mathbf x}
\newcommand{\configy}{\bar{\mathbf y}}
\DeclareMathOperator{\Btwn}{B'twn}
\DeclareMathOperator{\Srnd}{S'rnd}
\DeclareMathOperator{\IG}{InitGrid}
\DeclareMathOperator{\Lane}{Lane}
\DeclareMathOperator{\SG}{SG}
\DeclareMathOperator{\ABC}{ABC}
\begin{document}
%
\author{Hadrian Heil\footnote{supported by the German--Israeli--Foundation, grant number I-974-152.6/2007}}
\title{A Stationary, Mixing and perturbative\\ Counterexample to the $0$-$1$-law\\ for Random Walk in Random Environment\\ in Two Dimensions}
\maketitle
\begin{abstract}
We construct a two-dimensional counterexample of a random walk in random environment (RWRE). The environment is stationary, mixing and $\varepsilon$--perturbative, and the corresponding RWRE has non-trivial probability to wander off to the upper right. This is in contrast to the $0$-$1$-law that holds for i.i.d.\ environments.
\end{abstract}
\section{Random walk in random environment}\label{randwalk}
We start by fixing the notation and the basic notions of the model. 

We work in the $d$-dimensional space $\Z^d$, $d\ge1$. $\Nnull:=\{0,1,2,\dots\}$ and $\N:=\{1,2,\dots\}$ stand for the natural numbers.

We will count dimensions from $0$ to $d-1$; so, we write $u=(u_0,u_1,\dots,u_{d-1})\in\Z^d$, and denote by $e_0,\dots e_{d-1}$ the canonical unit vectors in $\Z^d$. This nonstandard--notation will simplify things later. For two vectors $v,w\in\Z^d$, $v\cdot w$ denotes the scalar product.

For any real number $r\in\R$, we will be using the floor function $\lfloor r\rfloor:=\max\{m\in\Nnull:m\le r\}$ and for any natural number $l\in\Nnull$ the modulo operation $\SL{l}:=\indic_{l\text{ is impair}}\in\{0,1\}$.

If $\P$ is a probability measure, with the convenient notational abuse common in mathematical physics, we write ``$\P$'' for the expectation operator as well. 

Define 
\begin{equation}
 S^d:=\Big\{\varpi\in[0,1]^{\{\pm e_j,0\le j< d\}}:\smashoperator[r]{\sum_{e\in\{\pm e_j,0\le j<d\}}}\quad\varpi(e)=1\Big\},\ d\in\N,
\end{equation}
the set of nearest neighbour transition probabilities on $\Z^d$. We call a family $\omega=(\omega_u)_{u\in\Z^d}$ of $S^d$-valued random variables on an appropriate probability space $(\Omega,\filt A,P)$ a \emph{random environment} on $\Z^d$.

One might ask for a random environment to satisfy, with $0\le\kappa<1/2$ some \emph{ellipticity constant}, the condition
\begin{equation}
 P\big(\omega_u(e)\in(\kappa,1-\kappa)\big)=1\text{ for all }u\in\Z^d,e\in\{\pm e_j,0\le j<d\}.\label{ellipticity}
\end{equation}
If \eqref{ellipticity} is satisfied with $\kappa=0$, the environment is called \emph{elliptic}, and if it is even satisfied with some $\kappa>0$, \emph{uniformly elliptic}.

A morally even stronger notion of homogeneity is reached when one pushes $\kappa$ towards $\frac1{2d}$. For $\varepsilon>0$, $\omega$ is called \emph{$\varepsilon$--perturbative} if 
\begin{equation}
 P\big(\omega_u(e)\in[1/2d-\varepsilon,1/2d+\varepsilon]\big)=1\text{ for all }u\in\Z^d,\ e\in\{\pm e_j,0\le j<d\}.
\end{equation}

We use the term \emph{totally ergodic} for ``ergodic with respect to any shift''.

Take a starting point $v\in\Z^d$. To a random environment $\omega$ on $(\Omega,\filt A,P)$, we associate the random probability measure $P_v^\omega$, which, together with the $\Z^d$-valued random variables $(X_t)_{t\in\Nnull}$, establishes the \emph{random walk in random environment} $(P,P_v^\omega,(X_t)_{t\in\Nnull})$. It is defined to satisfy the Markov-property and 
\begin{gather}
 P_v^\omega(X_0=v)=1, \label{Pomega(X)}\\ 
 P_v^\omega(X_{t+1}=X_t+e|X_t=u)=\omega_u(e),\ e\in\{\pm e_j,0\le j\le d-1\},\ u\in\Z^d.
\end{gather}
In \cite{Kalikow81}, Kalikow considered questions of recurrence and transience of this model, and proved that for uniformly elliptic i.i.d.--environments, 
\begin{equation}
 PP_0^\omega(X_t\cdot v\text{ changes sign infinitely often})\in\{0,1\},\ v\in\Z^d.\label{changesign}
\end{equation}
He also raised the question whether in $d=2$, it holds that
\begin{equation}
 PP_0^\omega(X_t\cdot v\xrightarrow[t\to\infty]{}\infty)\in\{0,1\},\ v\in\R^d\setminus\{0\}.\label{Kalikowsquestion}
\end{equation}
Sznitman and Zerner highlighted in \cite{SznitmanZerner99} that Kalikow's question \eqref{Kalikowsquestion} is valid in any dimension $d\ge2$. They also pointed out that \eqref{changesign} implies
\begin{equation}
 P\big(P_0^\omega(X_t\cdot v\text{ is transient})\big)\in\{0,1\},\ v\in\Z^d.
\end{equation}
The term \emph{Kalikow's $0$--$1$--law} has since been established for this assertion. 

For $d=2$, Zerner and Merkl answer Kalikow's question (positively) for elliptic i.i.d.--envi\-ron\-ments in \cite{ZernerMerkl01}; an improved version of the proof is given in \cite{Zerner07}. Holmes and Salisbury treat the same questions without the assumption of ellipticity in \cite{HolmesSalisbury11}. 

The necessity of the i.i.d.--assumption is assessed in \cite{ZernerMerkl01} by means of an example for $d=2$ of an elliptic, ergodic and stationary environment that features
\begin{equation}
 PP_0^\omega(X_t\cdot v\xrightarrow[t\to\infty]{}\infty)\not\in\{0,1\}\text{ for some }v\in\Z^d.\label{notin}
\end{equation}
\cite{Zerner07} gives a similar example with an even totally ergodic environment.

As for $d\ge3$, Bramson, Zeitouni and Zerner \cite{BramsonZeitouniZerner06} have a uniformly elliptic, stationary, totally ergodic, and even mixing example of an environment satisfying \eqref{notin}. 

In the present article, we construct an environment with similar properties for dimension $d=2$. Our main theorem is indeed: 
\begin{thm}\thlabel{mainth}
 For any $\varepsilon>0$, there is an $\varepsilon$--perturbative, stationary, mixing random environment $\omega=(\omega_u)_{u\in\Z^2}$ with associated probability measure $P$ such that for the associated random walk $((X_t),P_0^\omega)$, it holds that
\begin{equation}
 PP_0^\omega\big(X_t\cdot\vec1\xrightarrow[t\to\infty]{}\infty\big)>0 \text{ as well as }PP_0^\omega\big(X_t\cdot\vec1\xrightarrow[t\to\infty]{}-\infty\big)>0.
\end{equation}
Here, $\vec1$ denotes the vector $(1,1)$.
\end{thm}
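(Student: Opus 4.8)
The plan is to build the environment in a hierarchical, self-similar fashion out of large ``blocks'' at infinitely many scales, each designed to create a net drift in the $\vec1$ direction over a prescribed region, while keeping every single transition probability within $[1/4-\varepsilon,1/4+\varepsilon]$. The starting observation is that a perturbative environment on $\Z^2$ looks locally like simple random walk, which in two dimensions is recurrent and diffusive; to beat the $0$-$1$-law we must arrange long-range correlations that gently but persistently push the walk along $(1,1)$ on a positive-probability event, without ever violating ellipticity or perturbativity, and without destroying mixing. I would therefore first design, at a single scale, a deterministic ``channel'' pattern of perturbations: inside a large box, tilt the local transition probabilities by order $\varepsilon$ so that the walk, conditioned to be in the box, has a ballistic drift with a positive component along $\vec1$. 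By a standard quantitative estimate (Azuma/Hoeffding applied to the martingale part, plus a crude bound on exit through the ``wrong'' faces) one shows that with probability bounded below, SRW-scale fluctuations do not overwhelm the induced drift, so the walk traverses the channel in the intended direction.

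The second and central step is the multiscale construction: place channels of scale $n$ densely enough that, wherever the walk is, it is with good probability about to enter a scale-$n$ channel that carries it further in the $\vec1$ direction; then nest these so that a scale-$(n+1)$ channel is itself tiled by scale-$n$ channels, with the perturbation ``budget'' $\varepsilon$ split geometrically across scales (say $\varepsilon_n = \varepsilon\, 2^{-n}$, so $\sum_n \varepsilon_n < \varepsilon$). The randomness enters by randomly translating, and possibly randomly orienting, the channel pattern at each scale independently, which is what will give stationarity and mixing: since the scale-$n$ pattern has correlation length of order (scale $n$), and the perturbation it contributes decays geometrically, the total environment is stationary under $\Z^2$-shifts and mixing (one gets a quantitative mixing rate by truncating at a finite number of scales and using independence across scales). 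One then runs a renormalization/percolation-type argument: call a scale-$n$ channel ``good'' if its internal channels of scale $n-1$ are mostly good and correctly aligned; good channels occur with probability close to $1$ and their goodness at different locations is finitely dependent, so a Liggett--Schonmann--Stacey-type comparison with Bernoulli percolation shows that with positive probability the origin lies at the start of an infinite nested chain of good channels leading to the upper right. On that event, a Borel--Cantelli argument over the scales — the walk successfully crosses the scale-$n$ channel for all large $n$, each crossing contributing a $\vec1$-displacement that grows with $n$ — forces $X_t\cdot\vec1\to\infty$ with positive annealed probability. The claim about $-\infty$ then follows by the symmetry $\omega \mapsto$ (reflected environment), since the construction can be made invariant in distribution under $u\mapsto -u$, giving a positive-probability event on which $X_t\cdot\vec1\to-\infty$ as well.

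The main obstacle I anticipate is reconciling three competing demands simultaneously: perturbativity forces the per-step drift to be tiny, recurrence of $2$-dimensional SRW means a tiny drift is easily washed out by $\sqrt t$ fluctuations, and mixing forbids the kind of rigid infinite-range structure that would make the drift ``stick''. The resolution has to be geometric: the channels must be long and thin and arranged so that the walk is \emph{funnelled} — after the diffusive fluctuations within one channel, the walk is with high probability deposited at the mouth of the next channel in the chain, so errors do not accumulate. Getting the quantitative bookkeeping right — choosing the aspect ratios, the scale ratio, the sequence $\varepsilon_n$, and the ``goodness'' thresholds so that (i) the good-channel probability at scale $n$ tends to $1$ fast enough for the percolation comparison and the Borel--Cantelli step to close, while (ii) the crossing probability stays bounded below uniformly in the scale, and (iii) the mixing rate is genuinely obtained — is where essentially all the work lies; the rest (stationarity, the symmetry argument for the $-\infty$ statement, verifying the perturbativity bound) is comparatively routine once the hierarchy is in place.
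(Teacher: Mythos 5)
Your proposal takes a genuinely different route. The paper does \emph{not} split the perturbation budget across scales: the same $\varepsilon$-sized kernel is placed on every street regardless of its superlevel, and this is compatible with $\varepsilon$-perturbativity because the streets form a \emph{partition} of $\Z^2$ — at each point only the highest-level street through it contributes. Your nested tilings stack perturbations from all scales at every point, which forces $\varepsilon_n=\varepsilon 2^{-n}$; note then that $\sum_{k>n}\varepsilon_k=\varepsilon_n$, so a misaligned coarse scale can exactly cancel the drift of scale $n$, and your ``goodness'' notion must bear that load. The paper also uses no percolation or renormalization at all: the construction is deliberately anchored at the origin (the initial grid $\IG$ is built from axis-processes ``seen from $0$''), so the chain of ever-wider streets emanating near $0$ in direction $\vec1$ is obtained almost surely by a one-shot Borel--Cantelli argument on the axis points $G_m^j$ (\thref{corev}, \thref{evstreetsjoin}); your random-translation design makes the origin generic, so you need the LSS comparison to find a good chain through it. The trade-off reverses for stationarity: your environment is stationary by fiat (independent random translations at each scale), while the paper works very hard for it through the ``responsibility''/``bijmap'' machinery of Subsection~\ref{stationarity}. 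The mixing proofs also differ structurally: the paper \emph{cuts} — with probability tending to $1$ a sufficiently high-level street separates the two boxes, and conditionally on that street they are \emph{exactly} independent — whereas your truncate-and-decay argument bounds only the amplitude of the residual dependence, not its presence, and has to be carried out for a continuum-valued field $\omega_u=\tfrac14+\sum_n\delta_n(u)$ rather than the finite-valued $\SG$; this is the step I would press hardest. Finally, be aware that the paper's balance between width, length and drift uses factorial widths $\plannedwidth_m=m!^2$ against lengths at most $(m+1)!^{2\alpha}$, a far steeper hierarchy than anything geometric, and that steepness does real work in making the escape probabilities of Subsection~\ref{largeindeed} summable; with a geometric budget $\varepsilon_n$ you must check that widths $W_n$, lengths $L_n$ can simultaneously give $\varepsilon_n^2L_n\to\infty$, $W_n^2/L_n\to\infty$, and a consistent nesting, which is tight.
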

A preprint by Guo \cite{guo} is concerned with the limiting velocity of the random walk in random environment on the events $\{X_t\cdot v\xrightarrow[t\to\infty]{}\dagger\infty\}$, $\dagger\in\{+,-\}$, $v\in\R^d$, in dimensions $d\ge2$, in the case where the random environment satisfies uniform ellipticity and a certain strong mixing condition which holds in Gibbsian environments, for instance.

\begin{proof}[Proof of \thref{mainth}, and organisation of the article]
 In Section \ref{randenv}, we construct an object called streetgrid which we use to define the actual random environment in Subsection \ref{transprob}. We prove the streetgrid to be stationary and mixing in the Subsections \ref{stationarity} and \ref{mixing}. These properties are inherited in the definition of the random environment. 
 
 In Subsection \ref{large}, we show that there are areas growing in the direction of $\vec1$ that are in some sense large. This has the consequence, via the placement of the transition probabilities, that the random walk has positive probability of never leaving these areas, while wandering off to infinity in the direction of $\vec1$. This is shown in Subsection \ref{largeindeed}. The same arguments could be repeated for $-\vec1$, which finishes the proof. 
\end{proof}

We should want to indicate some of the sources of inspiration that contributed to this article. The ideas of conducting the random walk to infinity on a ``treelike structure'' of ``not too slowly growing roads leading to infinity'' has been applied in \cite{BramsonZeitouniZerner06}. As for how to construct such a structure in dimension $d=2$, Häggström and Mester \cite{HaggstromMester09} had the idea of ever larger, ever rarer streets joining each other. By using Poisson processes of different intensities as the underlying structure instead of their``windows'' of fixed length, we were able to avoid some of the rigidity of their model and to make assertions on mixing, at the price of developing a completely new construction. 
\section{Construction of a random environment}\label{randenv}
\subsection{Notation}
\subsubsection{Boxes}
Recall the convention to write $u=(u_0,u_1)\in\Z^2$.
We call a \emph{box} any subset $B$ of $\Z^2$ that can be expressed as 
\begin{equation}
B=\{b_0,\dots,b_0'\}\times\{b_1,\dots,b_1'\}\text{ for some }b_j,b_j'\in\Z\text{ with }b_j\le b_j',\ j\in\{0,1\}.\label{boxfaces}
\end{equation}

For a box $B$, we define the \emph{emplacement of the faces of $B$} as
\begin{equation}
 b_j(B):=b_j,\quad b_j'(B):=b_j',\quad j\in\{0,1\}, \label{emplacements}
\end{equation}
where $b_j,b_j'$, $j\in\{0,1\}$, are taken from \eqref{boxfaces}.

For $v,w\in\Z^2$ we define the \emph{box between $v$ and $w$} as
\begin{equation}
\Btwn(v,w):=\Big\{\min\{v_0,w_0\},\dots,\max\{v_0,w_0\}\Big\}\times\Big\{\min\{v_1,w_1\},\dots,\max\{v_1,w_1\}\Big\}.\label{boxbetween}
\end{equation}

The (outer) \emph{boundary of a box $B$} may be defined as 
\begin{equation}
\partial B:=\{u\in\Z^2:d(u,B)=1\}; \label{boundary}
\end{equation}
here, $d(\cdot,\cdot)$ means the 1-metric. It is convenient to define as well the \emph{closure of $B$}, which is
\begin{equation}
 \closure B:=B\cup\partial B;\label{closure}
\end{equation}
the \emph{upper right corner $\UR B$} of a box $B$ is
\begin{equation}
\UR B:=(b_0'(B),b_1'(B)).\label{upperrightcorner}
\end{equation}
\subsubsection{Streets and streetgrid, and blocks}
We call a number $m\in\Nnull$ a \emph{superlevel}, and $k\in\{0,1\}$ a \emph{sublevel}. The mapping $(m,k)\mapsto2m+k:\ \Nnull\times \{0,1\}\rightarrow\Nnull$ is bijective, and this number is called the corresponding \emph{level}. Given any level $l\in\Nnull$, we can obviously reconstitute superlevel and sublevel using the inverse function, $(\RL{l},l\mod 2)$.

If a level has somehow been assigned to some object, we will speak of the superlevel and the sublevel of that object as well.

Given a level $l\in\Nnull$ and a function $F\in\Nnull^\mathbbm D$, $\mathbbm D\subseteq\Z^2$, a box $B\subseteq\mathbbm D$ is called a \emph{street of level $l$ w.r.t.\ $F$} if
\begin{equation}
F_u=l\textnormal{ for all }u\in B\textnormal{, and }F_u\neq l\text{ for all }u\in\partial B\cap\mathbbm D.\label{fieldstreet}
\end{equation}
We call it a \emph{field w.r.t.\ $F$} if it is a street of level $0$ w.r.t.\ $F$. When it is obvious or not important which level and function are meant, we will simply speak of ``street'' and ``field''.

We say $F$ is a \emph{streetgrid} if $\mathbbm D$ is the union of streets and fields with respect to $F$, i.e.
\begin{equation}
\mathbbm D=\bigcupdot_{l\in\Nnull}\smashoperator[r]{\bigcupdot_{\substack{B\text{ street of }\\\text{level }l\text{ w.r.t.\ }F}}}B.\label{streetgrid}
\end{equation}

Given a box $B$ contained in the domain of a streetgrid $F$, we define the \emph{level of the box $B$ w.r.t.\ $F$} as 
\begin{equation}
 \levelof(B)=\levelof^F(B):=\max_{u\in B}F_u.\label{levelofbox}
\end{equation}
Note that if the box $B$ is a street, the two definitions of ``level of the box $B$'' and ``level of the street $B$'' coincide.

For $B\subseteq\Z^2$ a box such that $\closure B\subseteq\mathbbm D$ the domain of $F$, we say \emph{$B$ is a block w.r.t.\ $F$} if all points $u\in\partial B$ are elements of exactly four different streets w.r.t.\ $F$, which are all of level greater than $\levelof^F(B)$. 

The \emph{upper and lower levels of the block $B$} are defined respectively as 
\begin{equation}
 \loben^F(B):=\max_{u\in\partial B}F_u\quad\text{ and }\quad\lunten^F(B):=\min_{u\in\partial B}F_u.\label{upperlower}
\end{equation}
$\lunten^\cdot(B)$ will be crucial in determining streets of which levels might be present if we have only information about $\partial B$ the boundary of $B$, and $\loben^\cdot(B)$ will constitute a lower bound to all levels that are not present in $\closure B$. 

Given a streetgrid $F$ and $u\in\Z^2$, we define $\Srnd_u^F$ to be the \emph{street or field around $u$}; to be precise, 
\begin{equation}
 \Srnd_u^F\text{ is defined to be the unique street or field }B\text{ w.r.t.\ }F\text{ such that }u\in B.
\end{equation}
\subsection{Construction of the streetgrid}
\subsubsection{Parameters and randomness used in the construction}
\begin{equation}
 \ROO_m:=(m+1)!^{-2}\text{ and }\plannedwidth_m:=m!^2,\ m\in\Nnull,\label{occurrenceandwidth}
\end{equation}
are called the \emph{rate of occurrence of streets at superlevel $m$} and the \emph{planned widths of the streets at superlevel $m$}, respectively.

We define 
\begin{equation}
 \calZ:=\Z\times\Nnull\times\Z^2,
\end{equation}
and, on some appropriate probability space $(\Omega,\filt F,P)$, a family of independent random variables 
\begin{equation}
\mathbf X:=\big(\mathbf X(x,l,w)\big)_{(x,l,w)\in\calZ},\label{X}
\end{equation}
which are to be Bernoulli-distributed with parameters $\ROO_{\RL{l}}$.

To understand the meaning of the index-set $\calZ$, we need to read it backwards. Every point in $\Z^2$ gets for every level in $\Nnull$ a Bernoulli-process $\{0,1\}^\Z$.

Having the necessary terms and definitions as well as the random ingredients at hand, we can start constructing the environment, beginning with a streetgrid. This will be done in two steps. Starting at the origin, we begin with narrow streets and make our way towards infinity by ever wider ones. This leaves wide areas of fields that will then be filled in the opposite direction with ever narrowing streets. 
\subsubsection{The initial grid}
We could put the random ingredient $\mathbf X$ directly into our construction, which will be built gradually in several definitions. We prefer however to write down these definitions as functions on $\{0,1\}^\calZ$, and to finally evaluate them at the random place $\mathbf X$. Notationwise, we will drop the dependence on $\mathbf x\in\{0,1\}^\calZ$ after the first appearence, though. Please note that the definitions may, for some $\mathbf x\in\{0,1\}^\calZ$, not make any sense; whenever there is some doubt on how $\mathbf x$ should look like, any typical realization $\mathbf x$ of $\mathbf X$ will do.

In a first step, we define processes that, roughly speaking, show where streets would be if each coordinate existed on its own. For each coordinate direction $j\in\{0,1\}$ and every superlevel $m\in\Nnull$, we attach to the left of every point highlighted as $1$ by the process $\mathbf x(\cdot,2m+j,0)$ an interval with the width $\plannedwidth_m$ of the respective superlevel $m$. Then, for any point, we take the maximum level of all streets the point lies in; that is, in the case of overlapping intervals of different levels, the higher level prevails:
{\allowdisplaybreaks[0]
\begin{align}
\MoveEqLeft W_x^j(\mathbf x):=2\max\big\{m\in\Nnull:\exists y\in\Z: x\le y<x+\plannedwidth_m,\ \mathbf x(y,2m+j,0)=1\big\}+j,\label{W}\\
&\qquad\qquad\qquad\qquad\qquad\qquad\qquad\qquad\qquad\qquad\ j\in\{0,1\},\ x\in\Z,\ \mathbf x\in\{0,1\}^\calZ.
\end{align}
}
We need to make sure $W_x^j(\mathbf X)$ is $P$--a.s.\ finite for all $j\in\{0,1\}$ and all $x\in \Z$. For $m\in\N$, $x\in\Z$, it holds that  
 \begin{align}
  \MoveEqLeft P\big(\exists\ y\in\Z:\ 0\le y-x<\beta_m,\ \mathbf X(y,2m+j,0)=1\big)\\
  &= P\big(\#\{y\in\Z:\ 0\le y-x<\beta_m,\ \mathbf X(y,2m+j,0)=1\}\ge1\big)\\
  &\le P\big(\#\{y\in\Z:\ 0\le y-x<\beta_m,\ \mathbf X(y,2m+j,0)=1\}\big)\\
  &=\smashoperator[r]{\sum_{y=x}^{x+\beta_m-1}}P\big(\mathbf X(y,2m+j,0)=1\big)=\beta_m\lambda_m=\frac{m!^2}{(m+1)!^2}=\frac1{(m+1)^2}.
 \end{align}
 With the Borel--Cantelli--lemma, we conclude that there are $P$--a.s.\ only finitely many $m\in\N$ satisfying the condition of the maximum in \eqref{W}, which hence is $P$--a.s.\ finite.

The dependence on $\mathbf x$ will de dropped for the next few definitions, even though it of course persists. 

The function $W_x^j$ will be further transformed by removing the outer intervals of smaller value in
\begin{equation}
V_x^j:=W_x^j\indic_{W_x^j=(\max_{0\le y\le x}W_y^j\vee\max_{x\le y\le 0}W_y^j)},\ j\in\{0,1\},\ x\in\Z.\label{funnelW}
\end{equation}
Note that the maximum over an empty set is to be read as $-\infty$.

The transition from $W_\cdot^0$ to $V_\cdot^0$ is visualized in Figure \ref{WV}.
\begin{rmk}\thlabel{WMI}
 A monotonically increasing function $f:\Nnull\to\R$ satifies $f_x=\max_{0\le y\le x}f_y$, $x\in\Nnull$. $(V_x^j)_{x\in\Nnull}$, $j\in\{0,1\}$ are not monotonically increasing, but ``weakly monotonically increasing, seen from $0$'' in the sense that they still satisfy
\begin{equation}
 V_x^j\in\Big\{0,\max_{0\le y\le x}V_y^j\Big\},\ x\in\Nnull,\ j\in\{0,1\},
\end{equation}
and a similar assertion for negative $x$.
\end{rmk}
\begin{figure}
\begin{center}
\begin{tikzpicture}
  \draw[very thin,<->](-7,0)--(7,0);
  \foreach \yy in{0,1,2}\draw[very thin](-0.2,\yy)--node[anchor=north west]{$\yy$}(0.2,\yy);
  \draw[very thin,->](0,-0.3)--(0,2.5);
  \foreach \xx in{-5,5}\draw[very thin](\xx,-0.2)--node[anchor=north west]{$\xx0$}(\xx,0.2);
  \begin{scope}[yshift=-4cm]
    \draw[very thin,<->](-7,0)--(7,0);
    \foreach \yy in{0,1,2}\draw[very thin](-0.2,\yy)--node[anchor=north west]{$\yy$}(0.2,\yy);
    \draw[very thin,->](0,-0.3)--(0,2.5);
    \foreach \xx in{-5,5}\draw[very thin](\xx,-0.2)--node[anchor=north west]{$\xx0$}(\xx,0.2);
  \end{scope}
  \begin{scope}
  \clip(-6.8,-4.6)rectangle(6.8,3.1);
  \foreach \xx in {-5.5}\fill[black!45!white](\xx,0)rectangle+(3.6,3);
  \foreach \xx in {-5.5}\fill[black!45!white](\xx,-4)rectangle+(3.6,3);

  \foreach \xx in {-3.9,2.7,6.3}\fill[black!35!white](\xx,0)rectangle+(0.4,2);
  \foreach \xx in {2.7,6.3}\fill[black!35!white](\xx,-4)rectangle+(0.4,2);

  \foreach \xx in {-6.7,-6.6,-5.6,-5.3,-5.0,-4.5,-4.3,-4.0,-3.5,-3.4,-2.7,-2.6,-2.4,-2.2,-0.3,0.5,0.8,1.2,1.5,2.1,2.7,2.9,3.1,3.5,4.0,5.1,5.4,5.8,6.1,6.2,6.3}\fill[black!15!white](\xx,0)rectangle+(0.1,1);
  \foreach \xx in {-0.3,0.5,0.8,1.2,1.5,2.1}\fill[black!15!white](\xx,-4)rectangle+(0.1,1);

  \foreach \xx in {-5.5}\draw[very thick](\xx,3) -- +(3.6,0);
  \foreach \xx in {-5.5}\draw[very thick](\xx,-1) -- +(3.6,0);

  \foreach \xx in {2.7,6.3}\draw[very thick](\xx,2) -- +(0.4,0);
  \foreach \xx in {2.7,6.3}\draw[very thick](\xx,-2) -- +(0.4,0);

  \foreach \xx in {-6.7,-6.6,-5.6,-0.3,0.5,0.8,1.2,1.5,2.1,3.1,3.5,4.0,5.1,5.4,5.8,6.1,6.2}\draw[very thick](\xx,1) -- +(0.1,0);
  \foreach \xx in {-0.3,0.5,0.8,1.2,1.5,2.1}\draw[very thick](\xx,-3) -- +(0.1,0);

  \foreach \xx in {-6.7,-6.6,-5.6,-0.3,0.5,0.8,1.2,1.5,2.1,3.1,3.5,4.0,5.1,5.4,5.8,6.1,6.2}\draw[very thick](\xx,1) -- +(0.1,0);
  \foreach \xx in {-0.3,0.5,0.8,1.2,1.5,2.1}\draw[very thick](\xx,-3) -- +(0.1,0);

  \foreach \xa/\xe in {-6.8/-6.7,-6.5/-5.6,-1.9/-0.3,-0.2/0.5,0.6/0.8,0.9/1.2,1.3/1.5,1.6/2.1,2.2/2.7,3.2/3.5,3.6/4.0,4.1/5.1,5.2/5.4,5.5/5.8,5.9/6.1,6.7/6.8}\draw[very thick](\xa,0) -- (\xe,0);
  \foreach \xa/\xe in {-6.8/-5.5,-1.9/-0.3,-0.2/0.5,0.6/0.8,0.9/1.2,1.3/1.5,1.6/2.1,2.2/2.7,3.1/6.3,6.7/6.8}\draw[very thick](\xa,-4) -- (\xe,-4);
  \end{scope}
\end{tikzpicture}
\end{center}
\caption{Simulation of a realization of $W_x^0(\mathbf X)$ and $V_x^0$, $x\in\Z$, represented by the thick line. The rectangles indicate the intervals attached to the points highlighted by the Bernoulli-processes of different intensities. Although in this picture the domain of the two functions looks continuous, they are defined to have domain $\Z$.}\label{WV}
\end{figure}
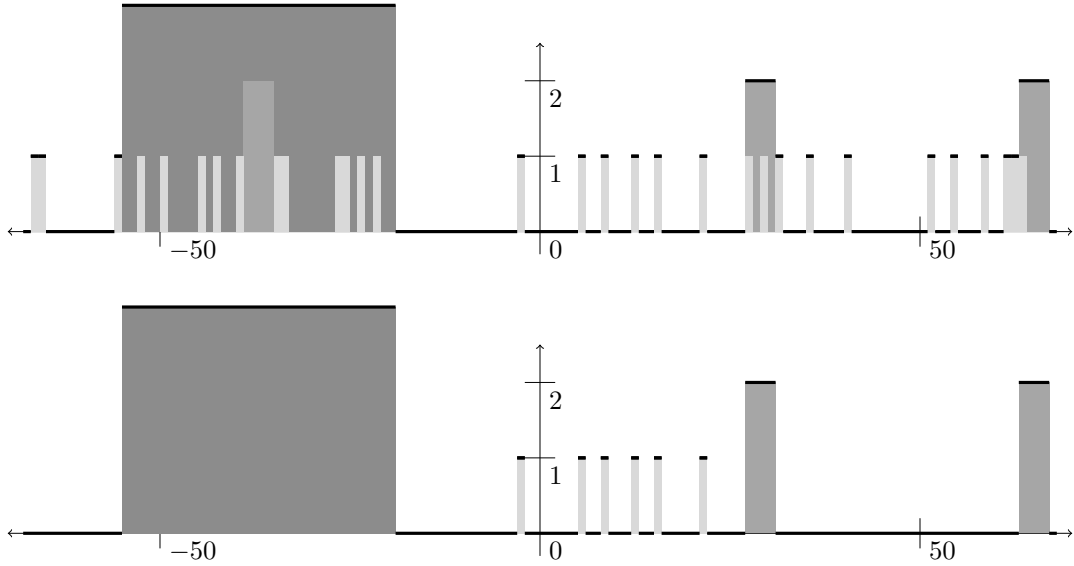
With the following definition, we begin our two--dimensional construction. Any point $u=(u_0,u_1)\in\Z^2$ gets assigned a level by 
\begin{equation}
 \IG^{\mathbf x}(u):=\big(V_{u_0}^0\vee V_{u_1}^1\big)\indic_{V_{u_0}^0\vee V_{u_1}^1\ge\max_{j\in\{0,1\}}\left(\max_{0\le x<u_j}V_x^j\vee\max_{u_j<x\le0}V_x^j\right)}.\label{funnelLu}
\end{equation}
In words, the point $u$ gets assigned the maximum of the two $V_{u_0}^0$ and $V_{u_1}^1$ provided this maximum is larger than any of the $V_x^j$ for $x$ between $0$ and $u_j$, with $j\in\{0,1\}$. Thus, $\IG$ satisfies a two-dimensional analogue of the heuristical notion of ``weakly monotonically increasing seen from $0$'' mentioned in \thref{WMI}. 

Note that $\IG$ is only the \emph{initial streetgrid}, and w.r.t.\ this $\IG$, large fields remain. 

We write $\IG(u):=\IG^{\mathbf X}(u)$; a simulation of $\IG$ is shown in Figure \ref{IG}.
\begin{figure}
\begin{center}
\begin{tikzpicture}
  \begin{scope}
    \clip(-6.8,-8)rectangle(6.8,8);
    \begin{scope}[black!75!white]
      \clip(-10.0,-10.0)rectangle(10.0,10.0);
      \foreach \x in {8.8} \fill(-10,\x)rectangle+(20,3.6);
      \end{scope}
    \begin{scope}[black!65!white]
      \clip(-10.0,-10.0)rectangle(10.0,8.8);
      \foreach \x in {-5.5} \fill(\x,-10)rectangle+(3.6,20);
      \end{scope}
    \begin{scope}[black!55!white]
      \clip(-1.9,-10.0)rectangle(10.0,8.8);
      \foreach \x in {-9.1,-6.4,-5.3,-2.6,2.5,3.4,3.8,6.3,7.5} \fill(-10,\x)rectangle+(20,0.4);
      \end{scope}
    \begin{scope}[black!45!white]
      \clip(-1.9,-2.2)rectangle(10.0,2.5);
      \foreach \x in {-3.9,2.7,6.3,9.3} \fill(\x,-10)rectangle+(0.4,20);
      \end{scope}
    \begin{scope}[black!35!white]
      \clip(-1.9,-2.2)rectangle(2.7,2.5);
      \foreach \x in {-9.6,-9.5,-9.4,-9.1,-7.8,-7.5,-5.8,-5.7,-5.4,-5.2,-5.1,-4.9,-4.7,-4.2,-4.0,-3.9,-3.1,-2.8,-2.5,-2.3,-1.7,-1.5,-1.3,-1.1,-0.8,-0.7,-0.6,-0.4,0.3,0.6,1.0,1.1,1.2,1.5,2.0,2.4,3.9,4.4,4.5,4.6,4.7,5.0,5.1,5.4,5.6,6.6,6.8,8.1,8.2,8.3,8.4,8.8,9.0,9.1,9.9} \fill(-10,\x)rectangle+(20,0.1);
      \end{scope}
    \begin{scope}[black!25!white]
      \clip(-1.9,-0.3)rectangle(2.7,0.3);
      \foreach \x in {-9.7,-9.5,-9.0,-8.8,-8.7,-7.9,-7.8,-7.2,-6.7,-6.6,-5.6,-5.3,-5.0,-4.5,-4.3,-4.0,-3.5,-3.4,-2.7,-2.6,-2.4,-2.2,-0.3,0.5,0.8,1.2,1.5,2.1,2.7,2.9,3.1,3.5,4.0,5.1,5.4,5.8,6.1,6.2,6.3,6.9,7.0,7.3,7.5,7.6,7.8,8.0,8.1,8.4,9.0,9.1,9.3,9.5,9.6} \fill(\x,-10)rectangle+(0.1,20);
      \end{scope}
    \fill[black!15!white](-0.2,-0.3)rectangle(0.5,0.3);
  \end{scope}
\end{tikzpicture}
\end{center}
\caption{Simulation of $\IG=\IG^{\mathbf X}$. Again, the domain of $\IG$ is not continuous, but $\Z^2$. $V_\cdot^1$ is the same as in Figure \ref{WV}.}\label{IG}
\end{figure}
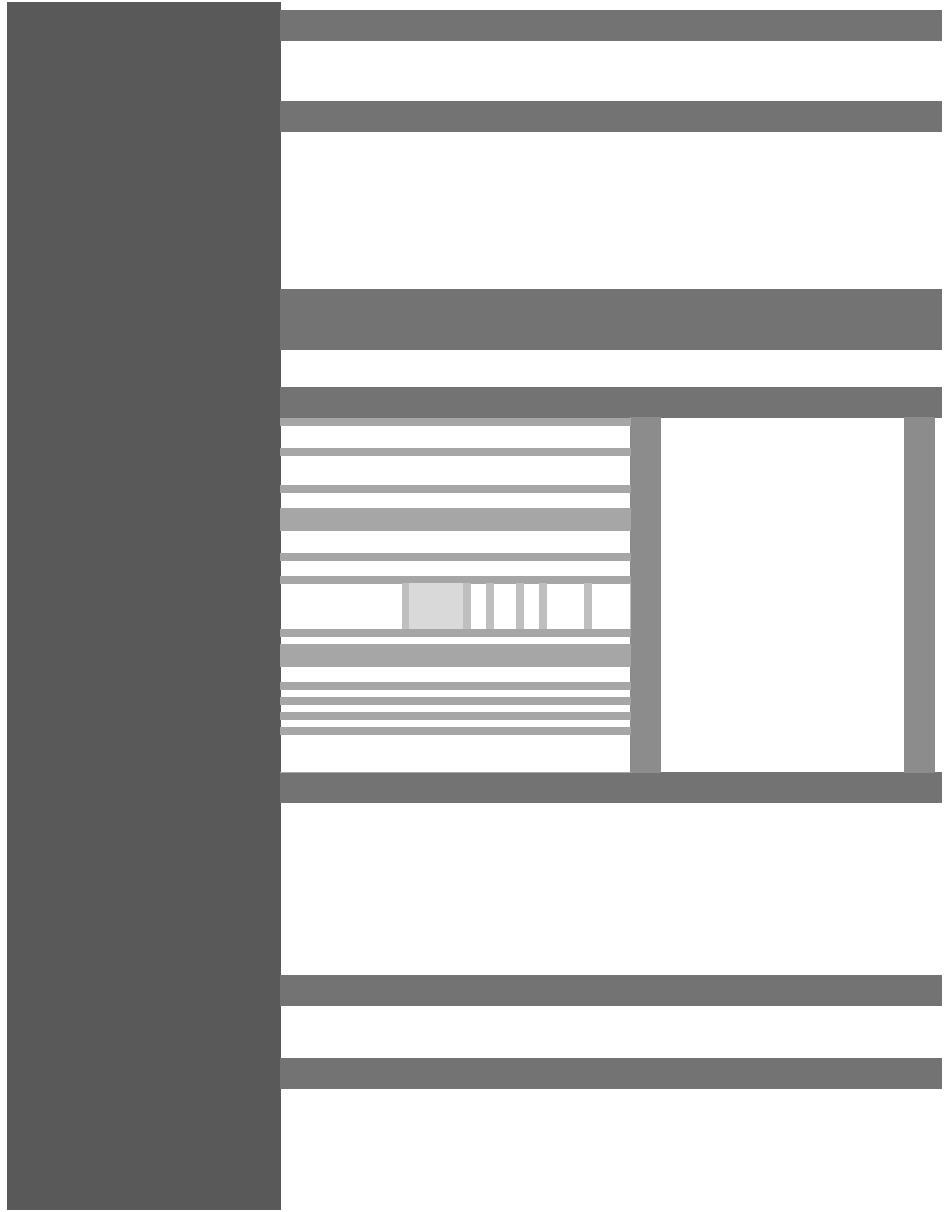
\begin{lem}\thlabel{InitgridStreetgrid}
 $\IG^{\mathbf X}(\cdot)$ is $P$--a.s.\ a streetgrid.
\end{lem}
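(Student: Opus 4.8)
The goal is to show that $P$-almost surely, $\IG^{\mathbf X}$ is a streetgrid in the sense of \eqref{streetgrid}, i.e.\ that $\Z^2$ decomposes into a disjoint union of streets and fields with respect to $\IG$. The plan is to work on the almost-sure event where all the $W_x^j(\mathbf X)$ (hence all $V_x^j$ and $\IG(u)$) are finite; this event has full measure by the Borel--Cantelli argument already carried out in the excerpt. On this event I will show that every point $u\in\Z^2$ lies in a box $B$ satisfying the street/field condition \eqref{fieldstreet} for level $l=\IG(u)$, and that two such boxes for the same or different levels are either equal or disjoint; disjointness of streets of \emph{different} levels is immediate since the level of a point is well-defined, so the real content is that the maximal constant-level region around $u$ is in fact a \emph{box}, i.e.\ a product of two intervals.

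The key structural step is to understand the level sets of $\IG$. First I would analyse the one-dimensional functions $V_\cdot^j$: by \thref{WMI} each $V_\cdot^j$ is ``weakly monotonically increasing seen from $0$'', and I would check that for each level value $\ell$, the set $\{x: V_x^j = \ell\}$ is a finite union of integer intervals, and moreover that on $\{x:V_x^j\ne 0, V_x^j\le \text{(running max)}\}$ the indicator in \eqref{funnelW} has already deleted the ``shadowed'' pieces, so that the positive level sets of $V^j$ are intervals that grow (weakly) as one moves away from $0$. Then I would feed this into the definition \eqref{funnelLu}: the value $\IG(u)$ is $V_{u_0}^0\vee V_{u_1}^1$ cut off by a monotonicity indicator in both coordinates. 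I claim that for a point $u$ with $\IG(u)=l$, the street/field around it is exactly $\Srnd_u^{\IG} = I_0\times I_1$ where, writing $a=V_{u_0}^0$, $b=V_{u_1}^1$ with $a\vee b = l$: if $a>b$ then $I_0=\{x: V_x^0=l\text{ and the monotonicity indicator holds}\}$ is the maximal interval of first coordinates around $u_0$ with $V^0=l$ not shadowed, and $I_1$ is the maximal interval of second coordinates $x$ around $u_1$ with $V_x^1\le l$ and the monotonicity condition satisfied; symmetrically if $b>a$; and the tie and the field ($l=0$) cases are handled by the same bookkeeping. The point is that the two defining conditions in \eqref{funnelLu} each factor through a single coordinate, so the level set is genuinely a product set, and the ``weakly increasing seen from $0$'' property guarantees each factor is a single interval rather than a disjoint union. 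One then verifies \eqref{fieldstreet}: on $\partial B$ one coordinate has just stepped out of its interval, so either some $V^j$ has jumped \emph{up} (strictly, by at least one level, since $V^j$ only takes finitely many values near $0$ and jumps are to strictly larger levels away from $0$) or the monotonicity indicator has switched off — in the first case $\IG>l$ there, in the second $\IG=0<l$ or again $\IG$ takes the new larger value; in all cases $\IG\ne l$ on $\partial B$. Finally, disjointness/covering: every $u$ is in such a $B$, and if $u$ lies in two boxes of the same level $l$ their union is again a constant-$l$ product box (intervals around $u_0$ resp.\ $u_1$ with the stated properties are unique maximal), so the decomposition \eqref{streetgrid} holds.

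I would organize the write-up as: (i) restrict to the full-measure event of finiteness; (ii) a lemma on the one-dimensional level sets of $V^j_\cdot$ (finitely many values, positive level sets are intervals growing away from $0$, controlled by \thref{WMI}); (iii) the product-structure claim for level sets of $\IG$, proved by separating the two coordinate-wise conditions in \eqref{funnelLu}; (iv) checking the boundary condition in \eqref{fieldstreet}; (v) concluding \eqref{streetgrid} from uniqueness of the maximal constant-level product box around each point.

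\textbf{Main obstacle.} The delicate point is precisely the product structure of the level sets of $\IG$ together with the boundary condition: the cutoff indicator in \eqref{funnelLu} compares $V_{u_0}^0\vee V_{u_1}^1$ against a quantity depending on \emph{both} coordinates' running maxima, so one must carefully argue that, for fixed level $l$, this still carves out a rectangle — using that along the boundary of a candidate box at least one coordinate witnesses either a strict upward jump of its $V^j$ (to a level $>l$, never an intermediate value, by the discreteness coming from finiteness) or the failure of the monotonicity constraint. Making this case analysis airtight — in particular ruling out ``diagonal'' boundary points where both coordinates change behaviour simultaneously and could conceivably leave $\IG=l$ — is where the real work lies; everything else (finiteness, disjointness for distinct levels, covering) is essentially bookkeeping on top of \thref{WMI} and the Borel--Cantelli estimate already established.
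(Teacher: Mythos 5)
Your overall strategy — restrict to the full--measure event of finiteness, show the level sets of $\IG$ are product boxes by exploiting the ``weakly monotonically increasing seen from $0$'' structure of $V^0_\cdot,V^1_\cdot$, and then check the boundary condition of \eqref{fieldstreet} — is the same as the paper's. The paper also derives an explicit pointwise formula for $\IG$ on the first quadrant (a three--way case split depending on whether $V^0_{u_0}$ dominates $V^1_\cdot$ up to $u_1$, the symmetric case, or else $0$) and then identifies the constant--level regions as rectangles emanating perpendicularly from the coordinate axes, analysing one quadrant at a time and patching at the axes.

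However, you leave a genuine gap exactly at the point you flag as the ``main obstacle'': ruling out diagonal boundary behaviour and making the case analysis airtight. The way the paper closes this is a specific observation you don't invoke: since levels are encoded as $2m+j$ with $j$ the coordinate index, $V^0_\cdot$ and $V^1_\cdot$ have \emph{disjoint nonzero codomains} (even versus odd). Consequently, for any $u$ with $\IG(u)=l\neq0$ there is no ambiguity about which of the two processes $V^0$, $V^1$ supplied the value $l$, and the ``tie'' case $V^0_{u_0}=V^1_{u_1}\neq0$ you wave at simply cannot occur. This is precisely what makes the blockings sharp: when a point's level comes from, say, $V^0_{u_0}$, the region in the $e_1$ direction is bounded exactly where $V^1$ first strictly exceeds that level, and one never lands on an intermediate or equal level from the other coordinate. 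Your claim that ``the two defining conditions in \eqref{funnelLu} each factor through a single coordinate'' is also not literally true — the cutoff indicator compares $V^0_{u_0}\vee V^1_{u_1}$ against running maxima in \emph{both} coordinates — so the product structure of the level set must be obtained through this case split and the parity argument, not by a direct factorisation. Once you add the disjoint--codomain observation, the plan you sketch does go through, and for the field case ($l=0$) one checks separately, as in the paper, that such regions are bounded on all four sides by streets of distinct levels.
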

\begin{proof}
 We need to show that $\Z^2$ is a patchwork of streets and fields w.r.t.\ $\IG^{\mathbf X}(\cdot)$ as in \eqref{streetgrid}. We will concentrate on the first quadrant, referring to analogy for the other ones.
 
 Define
 \begin{equation}
  Y_m^j:=\min\{x\in\Nnull|V_x^j> m\},\ m\in\Nnull,\ j\in\{0,1\}.\label{Ymj}
 \end{equation}
 On the coordinate axes, we have that
 \begin{align}
  \IG(0)&=\max_{j\in\{0,1\}}V_0^j,\\
  \IG(xe_0)&=\IG(0)\text{ for all }0\le x<Y_{\RL{\IG(0)}}^0,\\
  \IG(ye_1)&=\IG(0)\text{ for all }0\le y<Y_{\RL{\IG(0)}}^1,\\
  \IG(xe_0)&=V_x^0\text{ for all }x\ge Y_{\RL{\IG(0)}}^0,\\
  \IG(ye_1)&=V_y^1\text{ for all }y\ge Y_{\RL{\IG(0)}}^1.
 \end{align}
 On the first quadrant, it holds that 
 \begin{equation}
  \IG\big((x,y)\big)=\IG(0)\text{ for all }0\le x<Y_{\RL{\IG(0)}}^0,\ 0\le y<Y_{\RL{\IG(0)}}^1,
 \end{equation}
 and more generally, 
 \begin{equation}
  \IG\big((x,y)\big)=\begin{cases}
                     V_x^0&\text{if $V_x^0>V_z^1$ for all }0\le z\le y,\\
                     V_y^1&\text{if $V_y^1>V_z^0$ for all }0\le z\le x,\\
                     0&\text{else.}
                    \end{cases}
 \end{equation}
 If one takes this equation for fixed, say, $x$ with $V_x^0\neq0$ and lets run $y$ from $0$ to infinity, one gets the value $\IG((x,y))=V_x^0=\IG(xe_0)$ for all $y<\min\{y\in\Nnull|V_y^1>V_x^0\}$; in other words, until from the other coordinate, one gets blocked.  Because $V_\cdot^0$ and $V_\cdot^1$ have disjoint codomains (except for $0$, which they have in common), these blockings are sharp in the sense that one can always tell whether a point has got its value (different from $0$) from $V_\cdot^0$ or $V_\cdot^1$.

 Also, the other way around, if some point $(x,y)\in\Z^2$ has got its initial--grid--value from, say, $V_x^0$, then fixing $x$ and letting $z$ run from $y$ to $0$ yields
 \begin{equation}
  \IG\big((x,z)\big)=\IG\big((x,y)\big)\text{ for all }y\ge z\ge 0.
 \end{equation}
 Combining the arguments of the last two paragraphs, one can see that all points $u\in\Z^2$ satisfying $\IG(u)=2m+j\neq0$ lie in areas of constant $\IG$-value outgoing perpendicularily from the $j$-th coordinate axis. Each such area continues until it gets blocked by some area coming from the other coordinate axis. The areas are of rectangular shape, and $P$--a.s.\ finite. 

This applies as well to the areas where the initial grid equals $0$. These are indeed surrounded by four streets of different levels, so that they are fields. 
 
 Finally, we need not only to pay attention at the the four quadrants individually, but at the transition between them as well. Indeed, the streetgrid--property holds because between adjacent quadrants, the same $V_\cdot^j$, $j\in\{0,1\}$ influences the construction of the streets. 
\end{proof}
\begin{rmk}\thlabel{informalreponsibility}
 We will be saying ``\,$0$ is responsible in $\IG$ for the emplacement of streets of level $l$ on $D$'' for any block $D$ w.r.t.\ $\IG$ containing the origin and any level $\levelof^{\IG}(D)\le l<\lunten^{\IG}(D)$.
\end{rmk}
$\levelof^{\IG}(D)$ is the highest level of any streets placed on $D$. The  emplacement of these streets has been provided by the random ingredient $\mathbf X$ evaluated at points $(\cdot,l,0)$, so it is sound to say $0$ is responsible. 

How about the levels $\levelof^{\IG}(D)<l<\lunten^{\IG}(D)$? No street of these levels exists in $D$. But this absence of streets was stipulated by an absence of $1$s in the random ingredient $\mathbf X$ at points $(\cdot,l,0)$, $\levelof^{\IG}(D)<l<\lunten^{\IG}(D)$. So it is legitimate to say $0$ is responsible for those levels as well. 

We will extend the notion of responsibility in \thref{responsibility}. 
\subsubsection{Asphalting of the remaining fields}
After constructing $\IG^{\mathbf x}(u)$, we continue by iteratively putting the missing streets on the remaining fields. Let us describe informally how we proceed. 

The streets that are not fields w.r.t.\ $\IG^{\mathbf x}$ are to remain untouched. We want to work exclusively on the fields. 

By \thref{InitgridStreetgrid}, any field $B$ w.r.t.\ $\IG^{\mathbf x}$ is surrounded by four streets. The minimum of their level minus one is the level of the first streets that should be put on $B$. Determining the level of the streets to put is hence the first step. 

Then, we need to know the place where we put these streets. To each field $B$ will be assigned an own process resembling the one in \eqref{W}; this time however, only one level at a time is taken into account. The random ingredient of this process will be the Bernoulli process associated to the upper right corner of $B$ and the respective level.

Now, when the streets are put on the fields, smaller fields are created; on these, we put streets of the next lower level, and so on. 

Now, back to rigid definitions. First, we define a dummy and the starting point of the iteration,
\begin{equation}
 L_u^0:\equiv0,\quad L_u^1:=\IG^{\mathbf x}(u),\ u\in\Z^2.
\end{equation}
For $i\ge1$, and $B$ a field with respect to the $i$-th iteration step $L_\cdot^i$, we associate a level to $B$ by 
\begin{equation}
 l^i(B):=\begin{cases}
           \min_{v\in\partial B}L_v^i-1&\text{if $B$ is not a field with respect to }L_\cdot^{i-1}\\
           l^{i-1}(B)-1&\text{if it is.}
         \end{cases}
\end{equation}
This is the level of the streets that are going to be placed on $B$. The first line of the definition is used at the first iteration step, and also the default for the following steps; only if there has no street been put on a field in the last step, the second line makes sure that in the current step, the same level is not used again.

We provide the emplacement in $B$ for the new streets of level $l$ (we exceptionally remind the dependence on $\mathbf x$) by
\begin{equation}
 W_x^{l,B}(\mathbf x):=l\indic_{\exists y\in\Z:\ x\le y<x+\plannedwidth_{\RL{l}},\mathbf x(y,l,\UR B)=1},l\in\Nnull,\ x\in\Z.
\end{equation}
Given $l$, the indicator function checks whether at the point $x$, there is a street of level $l$ induced by the Bernoulli process at the upper right corner of the field. 

The streets are placed on the field $B$ using
\begin{equation}
 L_u^{l,B}:=W_{u_{\SL{l}}}^{l,B}\indic_{u\in B},\ u\in\Z^2.
\end{equation}
The sublevel $\SL{l}$ is taking care of the (vertical or horizontal) orientation of the streets. 

We need to do this setting of streets in every field, and set the whole iteration step as  
\begin{equation}
 L_u^i:=L_u^{i-1}+\smashoperator{\sum_{\substack{B\text{ field}\\\text{w.r.t.\ }L_\cdot^{i-1}}}}L_u^{l^i(B),B}. 
\end{equation}
We have put, on every field w.r.t.\ $L_\cdot^{i-1}$, streets of ``one level lower''.

The process $L_\cdot^i=L_\cdot^i(\mathbf x)$ converges pointwise with $i\rightarrow\infty$ for $P$--almost any realization $\mathbf x$ of $\mathbf X$: for any field w.r.t.\ $\IG^{\mathbf x}(\cdot)$, at some iteration, the level $1$ (with superlevel $0$) is reached and the remaining sub-fields are entirely filled with streets of level $1$. Another way of seeing the convergence is by remarking that for every point $u\in\Z^2$, the sequence $(L_u^i)_{i\in\N}$ is monotonically increasing and bounded. The limes will be called the \emph{final streetgrid} $\SG(\mathbf x)=(\SG(\mathbf x)_u)_{u\in\Z^2}$ and we write $\SG=(\SG_u)_{u\in\Z^2}:=(\SG(\mathbf X)_u)_{u\in\Z^2}$.

Based on the earlier simulation of the initial grid, a simulation of the final streetgrid can be found in Figure \ref{SG}.
\input{FigureC}
\begin{lem}
 $\SG(\mathbf X)$ is $P$--a.s.\ a streetgrid.
\end{lem}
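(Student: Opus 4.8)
The plan is to show by induction that every iterate $L^i_\cdot$ is a streetgrid, and then to transfer this to $\SG=\lim_iL^i$ using that $(L^i_u)_i$ is nondecreasing and that the iteration eventually fills every field of $\IG$ completely. Two simple facts drive everything. (i) Once a point carries a positive value it keeps it: if $L^{i-1}_u\neq0$ then $u$ lies in no field w.r.t.\ $L^{i-1}$ (a field carries the value $0$ on all of its points), hence $L^{l^i(B),B}_u=W^{l^i(B),B}_{u_{\SL{l^i(B)}}}\indic_{u\in B}=0$ for every field $B$ of $L^{i-1}$, so $L^i_u=L^{i-1}_u$; consequently $(L^i_u)_i$ is eventually constant, equal to $\SG_u$. (ii) The streets drawn on a field $B$ at step $i$ all have level $l^i(B)$, which is one below the minimal value present on $\partial B$, and along any nested chain of (sub-)fields this level drops by exactly one at each step; in particular no level ever drawn inside $B$ attains a value already present on $\partial B$.

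For the induction the base case $L^1=\IG$ is \thref{InitgridStreetgrid}. Assume $L^{i-1}$ is a streetgrid. Its streets of positive level are untouched by step $i$ and remain streets of $L^i$ of the same level by (i). A field $B$ of $L^{i-1}$ is transformed as follows: with $l:=l^i(B)$ and $j:=\SL{l}$, the set $\{u\in B:L^i_u=l\}=\{u\in B:W^{l,B}_{u_j}=l\}$ is, in the coordinate $j$, a union of intervals; decomposing it into its maximal intervals writes it as a disjoint union of boxes (``bands''), and the complementary part of $B$ is another disjoint union of boxes. One then checks the boundary conditions: the boundary of a band meets $B$ only in $j$-slices where $W^{l,B}=0$ and meets $\partial B$ only in streets of level $>l$, so each band is a street of level $l$ w.r.t.\ $L^i$; dually, each complementary box carries the value $0$ and has $L^i\neq0$ on its boundary, hence is a field. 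Thus $L^i$ is again a streetgrid; inspecting the boundary of a complementary box moreover gives that its minimal boundary value is exactly $l$, which is the ``drops by exactly one'' clause in (ii).

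To pass to the limit, fix a field $F$ of $\IG$. By (ii) the levels drawn inside $F$ run through $\lunten^{\IG}(F)-1,\lunten^{\IG}(F)-2,\dots$; when level $1$ is reached, the governing Bernoulli variables $\mathbf X(\cdot,1,\cdot)$ have parameter $\ROO_{\RL1}=\ROO_0=1$, so $P$-a.s.\ (simultaneously for the countably many fields and sub-fields involved) they equal $1$ identically, whence $W^{1,\cdot}\equiv1$ and the remaining sub-fields are filled entirely; since $\lunten^{\IG}(F)<\infty$ ($F$ being a.s.\ finite) this happens after finitely many steps. Hence $\{u\in\Z^2:L^i_u>0\}\uparrow\Z^2$ a.s., so $\SG$ has no fields, and for $u$ fixed and $i$ so large that $L^i_u=\SG_u$ the street $B$ of $u$ in the streetgrid $L^i$ is also a street of level $L^i_u$ w.r.t.\ $\SG$: no further street is ever drawn on $B$ by (i), and each boundary point of $B$ either already carries a larger value (which only grows) or lies in an adjacent field all of whose future levels are $<L^i_u$ by (ii), so $\SG\neq L^i_u$ on $\partial B$. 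These streets exhaust $\Z^2$ and are pairwise disjoint (distinct streets of a streetgrid cannot overlap, since a common point would, via a lattice path inside one of them, force a boundary point of the other to carry its level), so they partition $\Z^2$; i.e.\ $\SG$ is a streetgrid, all exceptional events invoked being countable unions of $P$-null sets.

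I expect the most delicate step to be the inductive bookkeeping of the second paragraph: checking uniformly — and in particular along $\partial F$, where $F$ may straddle two of the four quadrants from which $\IG$ was built — that the maximal-interval bands really are streets and the gaps really are fields, so that the implication ``$L^{i-1}$ a streetgrid $\Rightarrow L^i$ a streetgrid'' holds verbatim.
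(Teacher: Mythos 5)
Your proof is correct and takes essentially the same route as the paper's own (very terse) proof: the paper simply records that each iteration modifies only fields by laying down streets that span the field in one coordinate direction and are of strictly lower level than all surrounding streets, and that the sequence $L^i$ is locally eventually constant, which is precisely the induction and limit argument you spell out in detail. The justification for local stabilization you give — reaching superlevel $0$ with occurrence rate $\ROO_0=1$ — is also the paper's, stated in the paragraph preceding the lemma rather than inside its proof. As a small remark, the quadrant-straddling worry you flag at the end is really a concern for \thref{InitgridStreetgrid}, not for your inductive step: once you know the fields of $L^{i-1}$ are boxes bounded by higher-level streets, the band decomposition of a field is a purely one-dimensional matter in the coordinate $\SL{l^i(B)}$, independent of the quadrant structure used to build $\IG$.
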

\begin{proof}
 Each iteration step $L^i$ is: to obtain $L^i$, only the fields of $L^{i-1}$ are changed, and on these fields are placed streets extending in one coordinate direction up to the boundary of the field they are placed on. These streets are of strictly lower level than all surrounding streets. 

 Because the passage to the limit is of the type where for any finite region, the sequence is from some point on constant and  equal to the limiting object, $\SG$ is a streetgrid as well. 
\end{proof}
We turn again towards the concept of responsibility. This time, we give a precise definition, and then explain how it relates to our construction of the streetgrid.
\begin{dfn}\thlabel{responsibility}
 Take a streetgrid $g$. For any block $D$ w.r.t.\ $g$ and any $\levelof^g(D)\le l<\lunten^g(D)$, there is a unique $w\in\Z^2$ of which we say that it is \emph{in $g$ responsible for the emplacement of streets of level $l$ in $D$}. It is given by $w=0$ if $0\in D$, and $w=\UR D$ if $0\not\in D$.  
\end{dfn}
For $g=\IG$, this definition exactly reflects \thref{informalreponsibility}. The streets already present in $\IG$ are carried over to $\SG$, so it is reasonable to say $0$ is responsible for these in $\SG$ as well. 

The responsibility of points $w\neq0$ can be understood as follows: Any field $D$ w.r.t.\ $\IG$ does not contain the origin. It is also a block and will remain a block in the course of the construction. 

The first iteration step is about placing streets of level $l=\lunten^{\IG}(D)-1$ on $D$. The randomness for their emplacement comes from the Bernoulli process $\mathbf X((\cdot,l,\UR D))$. This is why $\UR D$ should be considered responsible for this block and level. 

If no streets of level $l$ are placed (because the Bernoulli process is $0$ in the relevant range), $\UR D$ is responsible for the subsequent lower levels as well, until streets is placed. The level of these streets will later turn out to be the level $\levelof^{\SG}(D)$ of the block $D$. 

By the placement of these streets, smaller fields are created, and it is \emph{their} upper right corner that provides the randomness via $\mathbf X$. These upper right corners are hence the places that are responsible for the streets of these lower levels, on these smaller fields (which again are and remain blocks). 

The next level shows that there is no conflict of responsibility.
\begin{lem}\thlabel{responsibleforoneblock}
 Take a streetgrid $g$. If $w\in\Z^2$ is responsible in $g$ for the emplacement of streets of level $l$ in $D$, where $l\in\N$ is a level and $D$ some block w.r.t.\ $g$, then $w$ is \emph{not} responsible in $g$ for the emplacement of streets of level $l$ in $B$, where $B\neq D$ is some other block w.r.t.\ $g$.
\end{lem}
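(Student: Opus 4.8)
The plan is to reduce the statement to a single geometric uniqueness fact and then read the lemma off from Definition~\thref{responsibility}. The fact I would establish first is: for a streetgrid $g$, a level $l\in\N$, and two blocks $D,B$ w.r.t.\ $g$ with $\levelof^g(D)\le l<\lunten^g(D)$ and $\levelof^g(B)\le l<\lunten^g(B)$, if $D\cap B\neq\emptyset$ then $D=B$. Granting this, the lemma is immediate. Suppose $w$ were responsible in $g$ for streets of level $l$ both in $D$ and in a block $B$. If $w=0$, then Definition~\thref{responsibility} applied to $D$ forces $0\in D$ --- in the sub-case ``$0\notin D$ and $w=\UR D$'' we would have $\UR D=0\in D$ anyway, since $\UR D$ is always a point of $D$ --- and the same reasoning gives $0\in B$, so $0\in D\cap B$. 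If $w\neq0$, then Definition~\thref{responsibility} forces $0\notin D$ with $w=\UR D$ and $0\notin B$ with $w=\UR B$, hence $\UR D=w=\UR B$, a point of both $D$ and $B$. In either case $D\cap B\neq\emptyset$, so the uniqueness fact yields $D=B$, contradicting $B\neq D$.

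To prove the uniqueness fact, write $D=\{a_0,\dots,a_0'\}\times\{a_1,\dots,a_1'\}$ and $B=\{b_0,\dots,b_0'\}\times\{b_1,\dots,b_1'\}$ as in \eqref{boxfaces}. By \eqref{levelofbox} every $u\in D$ satisfies $g_u\le\levelof^g(D)\le l$, and by \eqref{upperlower} every $u\in\partial D$ satisfies $g_u\ge\lunten^g(D)>l$; likewise for $B$. Assume $D\neq B$ and fix $p\in D\cap B$, so that $p_j$ lies in $\{a_j,\dots,a_j'\}\cap\{b_j,\dots,b_j'\}$ for $j\in\{0,1\}$. Since $D\neq B$, some of the four emplacements differs between $D$ and $B$; after relabelling by one of the symmetries of $\Z^2$ fixing the origin (reflections in the axes, the coordinate interchange) and possibly exchanging $D$ and $B$, all of which preserve the inequalities above, we may assume $a_0'<b_0'$. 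Set $u:=(a_0'+1,p_1)$. Then the $1$-distance of $u$ to $D$ equals $1$, realised at $(a_0',p_1)\in D$, so $u\in\partial D$ and $g_u>l$. But $b_0\le p_0\le a_0'<b_0'$ gives $a_0'+1\in\{b_0,\dots,b_0'\}$, and $p_1\in\{b_1,\dots,b_1'\}$, so $u\in B$ and $g_u\le l$: a contradiction. (Had a lower emplacement been the one differing, say $a_0>b_0$, the same argument works with $u:=(a_0-1,p_1)$.) Thus $D=B$.

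The work here is entirely bookkeeping; the only point requiring care is the dovetailing of the two membership checks for the witness point $u$: it must lie \emph{on} the outer boundary of one box and \emph{inside} the other, and the strict inequality $a_0'<b_0'$ --- equivalently $a_0'+1\le b_0'$ for integers --- is exactly what makes both hold. One also has to be sure the symmetry reductions genuinely cover all eight a-priori cases (two coordinate directions, upper or lower face, either box protruding). No property of the particular grids $\IG$ or $\SG$ is needed, so the argument is purely combinatorial and applies to an arbitrary streetgrid.
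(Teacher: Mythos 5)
Your proposal is correct and takes essentially the same approach as the paper's proof: both first deduce $D\cap B\neq\emptyset$ from \thref{responsibility} and then obtain a contradiction from the level inequalities at a point lying simultaneously on the boundary of one block and inside the other. The paper simply asserts the existence of such a point (``without loss of generality, $\partial B\cap D\neq\emptyset$'', whence $\levelof^g(D)\ge\lunten^g(B)>l\ge\levelof^g(D)$), whereas you construct the witness point explicitly; the extra bookkeeping is harmless and fills in a step the paper leaves implicit.
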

\begin{proof}
 Suppose $w$ is responsible in $g$ for the emplacement of streets of level $l$ in both $D$ and $B$, where both $D$ and $B$ are blocks w.r.t.\ $g$, but $D\neq B$. A first deduction is that either both $B$ and $D$ must contain the origin, or share the same upper right corner $w=\UR B=\UR D$. In either case, $B\cap D\neq\emptyset$. 

 As $B\neq D$, this implies that, without loss of generality, $\partial B\cap D\neq\emptyset$. Hence, $\levelof^g(D)\ge\lunten^g(B)$. This is a contradiction to that $w$ was to be responsible for the same level in $B$ and $D$.  
\end{proof}
\subsection{Transition probabilities for the random environment}\label{transprob}
In order to determine where what transition kernels will be placed, we cut down the streets of the streetgrid to lanes using the following definition:
\begin{dfn}
For $\diamondsuit,\heartsuit\in\{+,-\}$, $B$ a street w.r.t.\ $\SG(\mathbf X)$ of superlevel $m:=\RL{\levelof^{\SG}(B)}\ge2$ and sublevel $k:=\SL{\levelof^{\SG}(B)}$, we define the \emph{lanes}
\begin{equation}
 \Lane_{\diamondsuit,\heartsuit}^{\SG}(B):=\begin{cases}
                             \{u\in B:b_k(B)\phantom{+\frac{\plannedwidth_m}4\ }\le u_k<b_k(B)+\frac{\plannedwidth_m}4\}&\text{if } \diamondsuit=+,\heartsuit=+;\\
                             \{u\in B:b_k(B)+\frac{\plannedwidth_m}4\le u_k<b_k(B)+\frac{\plannedwidth_m}2\}&\text{if } \diamondsuit=+,\heartsuit=-;\\
                             \{u\in B:b_k'(B)-\frac{\plannedwidth_m}2<u_k\le b_k'(B)-\frac{\plannedwidth_m}4\}&\text{if } \diamondsuit=-,\heartsuit=-;\\
                             \{u\in B:b_k'(B)-\frac{\plannedwidth_m}4<u_k\le b_k'(B)\}&\text{if } \diamondsuit=-,\heartsuit=+.
                            \end{cases}
\end{equation}
The definition of $b_\cdot(B)$ and $b_\cdot'(B)$ was given in \eqref{emplacements}.
\end{dfn}
Note that there might be some non--empty space between the two middle lanes $\Lane_{+,-}^{\SG}(B)$ and $\Lane_{-,-}^{\SG}(B)$.

We want to place the transition probabilities in a way that on the lanes with ``$+$'' as first index, the random walk feels a drift northwards or eastwards (if the sublevel of the street is $0$ or $1$, respectively), and on the lanes with ``$-$'' as first index, it feels a drift to the south or the west. The distinction between $+$ and $-$ in the second index is then used to provide a drift to the area where two lanes of the same street with the same first index meet. 

\begin{dfn}\thlabel{environment}
With $\diamondsuit,\heartsuit\in\{+,-\}$, we define 
 \begin{align}
  \omega_{\diamondsuit,\heartsuit}&:\{\pm e_0,\pm e_1\}\rightarrow[\frac14-\varepsilon,\frac14+\varepsilon],\\
  \omega_{\diamondsuit,\heartsuit}(\dagger e_0)&:=\frac14+(\dagger(\diamondsuit(\heartsuit \varepsilon))),\\
  \omega_{\diamondsuit,\heartsuit}(\dagger e_1)&:=\frac14+(\dagger(\diamondsuit \varepsilon)),\ \dagger\in\{+,-\},\\
\shortintertext{and}
  \omega_{\frac14}(e)&:=\frac14,\ e\in\{\pm e_0,\pm e_1\}.
 \end{align}
\end{dfn}
We will also be using the notation $\omega_\nearrow=\omega_{+,+}$ and visualize this local transition probability either by 
\begin{tikzpicture}
 \def\uebergang{\filldraw[fill=black](0,0)circle(0.02cm);\draw[<->](-0.1,0)--(0.2,0);\draw[<->](0,-0.1)--(0,0.2);}
 \uebergang
\end{tikzpicture} or $\nearrow$. See also Figure \ref{transitionsfigure}.

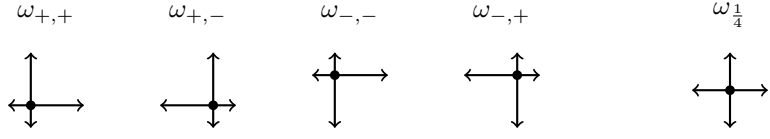
\begin{figure}[tbh]
\begin{center}
\begin{tikzpicture}[thick]
 \def\uebergang{\filldraw[fill=black](-0.2,-0.2)circle(0.05cm);\draw[<->](-0.5,-0.2)--(0.5,-0.2);\draw[<->](-0.2,-0.5)--(-0.2,0.5);}
 \foreach\xx/\aa/\bb/\rota in {0/+/+/0,1/+/-/90,2/-/-/270,3/-/+/180}
 {
  \draw(\xx*2,1)node{$\omega_{\aa,\bb}$};
  \begin{scope}[shift={(\xx*2,0)},rotate=\rota]\uebergang\end{scope}
 }
 \begin{scope}[xshift=9cm]
  \draw (0,1)node{$\omega_{\frac14}$};
  \filldraw[fill=black](0,0)circle(0.05cm);
  \draw[<->](-0.5,0)--(0.5,0);
  \draw[<->](0,-0.5)--(0,0.5);
 \end{scope}
 \end{tikzpicture}
 \caption{The transition probability kernels $\omega_{\cdot,\cdot}^j$. The lengths of the arrows are not to scale.}\label{transitionsfigure}
 \end{center}
\end{figure}
We will need a reflection matrix, namely
\begin{equation}
 R:=\left(\begin{array}{cc}0&1\\1&0\end{array}\right),
\end{equation}
to place the transition probabilities we just defined on the streets. 
\begin{dfn}
Given the streetgrid $\SG(\mathbf X)$, the transition probability kernels of the environment at place $u\in\Z^2$ will be defined as follows. If $u\in B$ a street w.r.t.\ $\SG(\mathbf X)$ such that $\RL{\levelof(B)}\ge2$ and $b'_{\SL{\levelof(B)}}(B)-b_{\SL{\levelof(B)}}(B)+1\ge\plannedwidth_{\RL{\levelof(B)}}$, set
\begin{equation}
 \omega_u=\omega_u(\mathbf X):=\begin{cases}
            \omega_{\diamondsuit,\heartsuit}&\text{if }u\in \Lane_{\diamondsuit,\heartsuit}^{\SG}(B),\ \diamondsuit,\heartsuit\in\{+,-\},\ \SL{\levelof(B)}=0,\\
            \omega_{\diamondsuit,\heartsuit}\circ R&\text{if }u\in \Lane_{\diamondsuit,\heartsuit}^{\SG}(B),\ \diamondsuit,\heartsuit\in\{+,-\},\ \SL{\levelof(B)}=1,\\
            \omega_{\frac14},&\text{else.}
           \end{cases}
\end{equation}
If $u\in B$ any other street, set $\omega_u:=\omega_{\frac14}$.

Here, $\omega_{\diamondsuit,\heartsuit}\circ R(e)=\omega_{\diamondsuit,\heartsuit}(Re)$, $e\in\{\pm e_0,\pm e_1\}$.
\end{dfn}
A visualization of the lanes and the different corresponding transition probabilities can be found in Figure \ref{lanesfigure}.

\def\uebergang[#1]#2{\begin{tikzpicture}[rotate=#1,xscale=#2, scale=0.7]\draw[<->](-0.19,0)--(0.75,0);\draw[<->](0,-0.19)--(0,0.75); \filldraw[fill=black](0,0)circle(0.05cm);\end{tikzpicture}}
\begin{figure}[htb]
\begin{center}
\begin{tikzpicture}
\draw[white] (1.5,0)-- node[anchor=east,xshift=-0.3cm,black]{$\Lane_{+,+}^{\SG}(B)$} node[anchor=west,black]{\uebergang[270]{-1}}
      (1.5,1)-- node[anchor=east,xshift=-0.3cm,black]{$\Lane_{+,-}^{\SG}(B)$} node[anchor=west,black]{\uebergang[180]{-1}}
      (1.5,2)--                                     node[anchor=west,black]{\begin{tikzpicture}[scale=0.7]\filldraw[fill=black](1,1)circle(0.05cm);\draw[<->]   (0.5,1)--(1.5,1);\draw[<->](1,0.5)--(1,1.5);\end{tikzpicture}}
      (1.5,3.5)--  node[anchor=east,xshift=-0.3cm,black]{$\Lane_{-,-}^{\SG}(B)$} node[anchor=west,black]{\uebergang[0]{-1}}
      (1.5,4.5)--  node[anchor=east,xshift=-0.3cm,black]{$\Lane_{-,+}^{\SG}(B)$} node[anchor=west,black]{\uebergang[90]{-1}} (1.5,5.5);
\draw[white] (3,0) -- node[anchor=south,black]{$\Lane_{+,+}^{\SG}(B')$}node[anchor=south,yshift=2cm,black]{\uebergang[0]{1}}
                    (5.5,0) --node[anchor=south,black]{$\Lane_{+,-}^{\SG}(B')$}node[anchor=south,yshift=2cm,black]{\uebergang[90]{1}}
                      (8,0) -- node[anchor=south,black]{$\Lane_{-,-}^{\SG}(B')$}node[anchor=south,yshift=2cm,black]{\uebergang[270]{1}}
                    (10.5,0) -- node[anchor=south,black]{$\Lane_{-,+}^{\SG}(B')$}node[anchor=south,yshift=2cm,black]{\uebergang[180]{1}} (13,0);
\foreach \shifter in {0cm,5.5cm}\draw[yshift=\shifter,thick](-1,0)--(3,0);
\foreach \shifter in {2cm,3.5cm}\draw[yshift=\shifter, very thin](-1,0)--(3,0);
\foreach \shifter in {1cm,4.5cm}\draw[yshift=\shifter,dashed] (-1,0)--(3,0);
\foreach \shifter in {0cm,10cm}\draw[xshift=\shifter,thick](3,-0.5)--(3,6);
\foreach \shifter in {5cm}\draw[xshift=\shifter,very thin](3,-0.5)--(3,6);
\foreach \shifter in {2.5cm,7.5cm}\draw[xshift=\shifter,dashed](3,-0.5)--(3,6);
\draw(0,5)node[anchor=south,yshift=0.3cm,fill=white]{Sublevel $1$};
\draw(8,5)node[anchor=south,yshift=0.3cm,fill=white]{Sublevel $0$};
\draw(-1,0)node[fill=white]{$B$};
\draw(13,0)node[fill=white]{$B'$};
\end{tikzpicture}
 \end{center}
 \caption{The horizontal street $B$ (of sublevel $1$) joining the vertical street $B'$ (of sublevel $0$). The street $B$ to the left is wider than its planned width, so that there is some space between the lanes $\Lane_{+,-}^{\SG}(B)$ and $\Lane_{-,-}^{\SG}(B)$. The width of the streets is not to scale: $B'$ ought to be much wider.}\label{lanesfigure}
\end{figure}
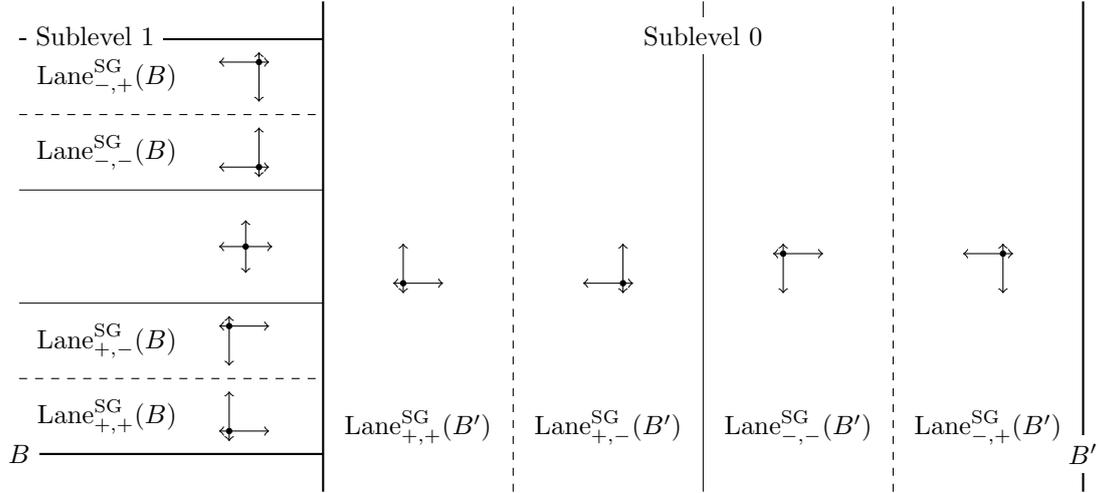
\section{Properties of $\IG$ and $\SG$}
\subsection{Heuristial approach}
 Let us describe a very simple model of a random walk in a non--random environment. Define the environment $\varpi$ by setting
 \begin{align}
  \varpi_u&\begin{cases}\omega_\searrow&\text{for all }u\in\Z^2\text{ such that }u_1\ge0,\\
                        \omega_\nearrow&\text{for all }u\in\Z^2\text{ such that }u_1<0.
               \end{cases}
 \end{align}
 That is, the random walk is subject to a uniform drift in direction of $e_0$ and towards the zeroth coordinate axis. It is easy to prove by standard martingale methods and the Borel--Cantelli--Lemma that the associated random walk in random environment $(X_n)_n$ starting at $0$ has positive probability never to leave the set $\{x\in\Z^2|x_0\ge0,|x_1|\le\sqrt{x_0}\}$, while following the first coordinate axis to infinity. 

 The morality of this example is that a random walk with uniform drift along a line and with a drift pushing it back towards that line has positive probability to never be further away from the line than the square root of the travelled length. 

 We will prove that $P$--almost surely, somewhere, there is a street w.r.t.\ $\IG^{\mathbf X}$ on the first coordinate axis satisfying the following: if one walks down that street (northwards) until one hits a perpendicular street, walks eastwards on that new one until the next perpendicular street, starts walking northwards again, and so on; if one does so, then:
 \begin{itemize}
  \item at the end of one street, one always encounters one of the next higher level;
  \item the width of these streets grows nicely, 
  \item the streets are not too long. 
 \end{itemize}
 Also, there will always be a drift pushing forward and to the middle of the two lanes with first index ``+'' in these streets. 

 The idea is that, when walking like described above, the width of the street the walker is in as a function of the distance travelled is larger than the square root $(\cdot)^{1/2}$; this is in analogy to the above example.

 An average--case--analysis shows heuristically why this is the case. 

 The streets of superlevel $m$ have a planned width of $\plannedwidth_m$ and, on average, a length of less than $\frac1{\ROO_{m+1}}$. The somewhat worst case for the random walk is if it has to go through the whole length of every street. The width of the $n$--th street the random walk visits is $\plannedwidth_n=n!^2$. The distance travelled is of the order of 
 \begin{equation}
  \sum_{i=1}^n\frac1{\ROO_{i+1}}=\sum_{i=1}^n(i+2)!^2\le2(n+2)!^2.
 \end{equation}
 This shows that the square root of the travelled distance is of slower growth than the width of the streets, leaving enough room to the random walk for fluctuations without leaving the sequence of streets. 
  
 The exact proof stretches over the whole subsection, but the most pertinent statements can be found in \thref{corev,widthandlength,evstreetsjoin}.
 \begin{rmk}
  The statements above are even true with \emph{any} root $(\cdot)^{1/\alpha}$, $\alpha>1$, instead of the square root $(\cdot)^{1/2}$. Hence we need to fix the exponent. 
 \end{rmk}
 \begin{dfn}
  \begin{equation}
   \text{Set $\alpha>1$ for the rest of the article.}\label{alpha}
  \end{equation}
 \end{dfn}
\subsection{The way to infinity is eventually large}\label{large}
 We start the proof of the above claims with a seemingly technical \thnameref{G} and \thnameref{threeevents}.

 \begin{dfn}\thlabel{G}
  The point inducing the (lowest part of the) first street of superlevel $m\in\Nnull$ on the $j$-th coordinate axis is defined as 
  \begin{equation}
   G_m^j:=\min\{x\in\Nnull:\mathbf X(x,2m+j,0)=1\},\ j\in\{0,1\}.\label{Gmj}
  \end{equation}
 \end{dfn}
 In view of the following Lemma, let us recall from \eqref{occurrenceandwidth} the parameters $\ROO_m:=(m+1)!^{-2}$ and $\plannedwidth_m:=m!^2,\ m\in\Nnull$.
 \begin{lem}\thlabel{threeevents}
  The following events all happen $P$--almost surely only finitely often (in $m$): 
  \begin{gather}
   \{G_{m-1}^0\ge G_m^0-\plannedwidth_m+1\};\quad\{G_{m-1}^1-\plannedwidth_{m-1}+\plannedwidth_m\ge G_m^1-\plannedwidth_m+1\};\\
   \{G_m^j>(\ROO_m)^{-\alpha}\},\ j\in\{0,1\}.
  \end{gather}
 \end{lem}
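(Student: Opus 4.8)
The plan is to treat the three families of events separately, all via Borel--Cantelli, after establishing two easy distributional facts about the random variables $G_m^j$. First, recall from \thref{G} that $G_m^j=\min\{x\in\Nnull:\mathbf X(x,2m+j,0)=1\}$; since $(\mathbf X(x,2m+j,0))_{x\in\Nnull}$ are i.i.d.\ Bernoulli with parameter $\ROO_m=(m+1)!^{-2}$, the variable $G_m^j$ is geometric, so $P(G_m^j\ge t)=(1-\ROO_m)^{\lceil t\rceil}$ for $t\ge 0$, and in particular $P(G_m^j=0)=\ROO_m$. Moreover, for distinct $m$ the variables $G_m^0,G_m^1$ use disjoint blocks of the independent family $\mathbf X$, which I will use to simplify the first two events.

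For the third family $\{G_m^j>(\ROO_m)^{-\alpha}\}$: using the geometric tail bound, $P(G_m^j>(\ROO_m)^{-\alpha})=(1-\ROO_m)^{\lceil(\ROO_m)^{-\alpha}\rceil}\le\exp(-\ROO_m(\ROO_m)^{-\alpha})=\exp(-(\ROO_m)^{1-\alpha})$. Since $\ROO_m=(m+1)!^{-2}\to0$ and $\alpha>1$, the exponent $(\ROO_m)^{1-\alpha}=(m+1)!^{2(\alpha-1)}\to\infty$ superexponentially, so $\sum_m P(G_m^j>(\ROO_m)^{-\alpha})<\infty$ for each $j$, and Borel--Cantelli gives the claim.

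For the first event $\{G_{m-1}^0\ge G_m^0-\plannedwidth_m+1\}$: I would bound its probability by splitting on the value of $G_m^0$. Observe $G_m^0$ depends only on $(\mathbf X(\cdot,2m,0))$ and $G_{m-1}^0$ only on $(\mathbf X(\cdot,2m-2,0))$, so they are independent. Conditioning on $G_m^0=g$, the event becomes $\{G_{m-1}^0\ge g-\plannedwidth_m+1\}$, which has probability $(1-\ROO_{m-1})^{(g-\plannedwidth_m+1)^+}$. To make this summable I combine it with the already-controlled event from the third family: on the complement of $\{G_m^0>(\ROO_m)^{-\alpha}\}$ we have $g\le(\ROO_m)^{-\alpha}$, so it suffices to show $\sum_m (1-\ROO_{m-1})^{((\ROO_m)^{-\alpha}-\plannedwidth_m+1)^+}$... this is the delicate point, because $(\ROO_m)^{-\alpha}=(m+1)!^{2\alpha}$ while $\ROO_{m-1}=m!^{-2}$, so the exponent of $(1-\ROO_{m-1})$ is of order $(m+1)!^{2\alpha}$ and the base differs from $1$ by $m!^{-2}$, giving a bound $\exp(-m!^{-2}(m+1)!^{2\alpha})=\exp(-(m+1)^{2\alpha}m!^{2\alpha-2})$, which is summable since $\alpha>1$. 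Hence $P(\{G_{m-1}^0\ge G_m^0-\plannedwidth_m+1\}\setminus\{G_m^0>(\ROO_m)^{-\alpha}\})$ is summable, and together with the third family (already shown to occur finitely often) Borel--Cantelli finishes this event. The second event, $\{G_{m-1}^1-\plannedwidth_{m-1}+\plannedwidth_m\ge G_m^1-\plannedwidth_m+1\}$, is handled identically: it reads $G_{m-1}^1\ge G_m^1-2\plannedwidth_m+\plannedwidth_{m-1}+1$, and since $\plannedwidth_{m-1}=(m-1)!^2$ and $\plannedwidth_m=m!^2$ are negligible compared to $(\ROO_m)^{-\alpha}$, the same splitting on $\{G_m^1>(\ROO_m)^{-\alpha}\}$ and the same exponential estimate apply verbatim.

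The main obstacle is the bookkeeping in the first two events: one must not try to estimate the probability of the difference-event directly (the support of $G_m^j$ is unbounded, so the tail of $G_{m-1}^j$ is not automatically small), but rather intersect with the good event $\{G_m^j\le(\ROO_m)^{-\alpha}\}$ first; the whole argument then hinges on the crude but sufficient inequality $\ROO_{m-1}\cdot(\ROO_m)^{-\alpha}=(m+1)^{2\alpha}m!^{2\alpha-2}\to\infty$, valid precisely because $\alpha>1$, which is why the exponent was fixed in \eqref{alpha}. Everything else is a routine geometric-tail computation plus three applications of Borel--Cantelli.
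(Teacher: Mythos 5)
Your treatment of the third family is correct and mirrors the paper's argument. For the first two families, however, there is a genuine gap. The conditional probability $(1-\ROO_{m-1})^{(g-\plannedwidth_m+1)^+}$ is a \emph{decreasing} function of $g$, and it equals $1$ for all $g<\plannedwidth_m$. Restricting to the complement of $\{G_m^0>(\ROO_m)^{-\alpha}\}$ gives an \emph{upper} bound $g\le(\ROO_m)^{-\alpha}$, which makes the conditional probability \emph{larger}, not smaller. The quantity $(1-\ROO_{m-1})^{((\ROO_m)^{-\alpha}-\plannedwidth_m+1)^+}$ that you propose to sum is therefore the \emph{infimum} of the conditional probability over the truncated range of $g$, not an upper bound for the truncated event probability, and the step ``so it suffices to show'' is unjustified. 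To see that it is the bound itself that fails, and not merely the phrasing: the truncated probability already contains the contribution of $\{G_m^0<\plannedwidth_m\}$, on which the conditional probability equals $1$, and
\begin{equation}
 P(G_m^0<\plannedwidth_m)=1-(1-\ROO_m)^{\plannedwidth_m}\sim\plannedwidth_m\ROO_m=(m+1)^{-2},
\end{equation}
which is summable but only polynomially small, far above the superexponentially small quantity your bound would give.

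The paper avoids any truncation and evaluates the geometric series in full:
\begin{equation}
 P(G_{m-1}^0\ge G_m^0-\plannedwidth_m+1)\le(1-\ROO_{m-1})^{-\plannedwidth_m+1}\big((m+1)^2+1-\ROO_{m-1}\big)^{-1}.
\end{equation}
The first factor converges (to $e$), precisely because $\ROO_{m-1}\plannedwidth_m=1$, and the second factor is $O(m^{-2})$, giving summability. In particular, $\alpha>1$ plays no role whatsoever in the first two families; the decisive smallness is $\ROO_m/\ROO_{m-1}=(m+1)^{-2}$, which comes out of the normalization of the geometric series. Your closing heuristic, that everything hinges on $\ROO_{m-1}(\ROO_m)^{-\alpha}\to\infty$, is a red herring for these two events---$\alpha$ is needed only for the third.
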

 \begin{proof}
  The $G_m^j$, $m\in\Nnull,$ $j\in\{0,1\}$, are geometrically distributed, independent random variables with success probability $\ROO_m=\frac1{(m+1)!^2}$. We can calculate, for the first event, 
  \begin{align}
   \MoveEqLeft P(G_{m-1}^0\ge G_m^0-\plannedwidth_m+1)\\
   &=\sum_{x\in\Nnull}P(G_{m-1}^0\ge x-\plannedwidth_m+1)P(G_m^0=x)\\
   &=\sum_{x\in\Nnull}(1-\ROO_{m-1})^{x-\beta_m+1}(1-\ROO_m)^x\ROO_m\\
   &=(1-\ROO_{m-1})^{-\plannedwidth_m+1}\ROO_m\sum_{x\in\Nnull}[(1-\ROO_{m-1})(1-\ROO_m)]^x\\
   &=(1-\ROO_{m-1})^{-\plannedwidth_m+1}\frac{\ROO_m}{\ROO_{m-1}+\ROO_m-\ROO_{m-1}\ROO_m}\\
   &=(1-\ROO_{m-1})^{-\plannedwidth_m+1}\Big(\frac{\ROO_{m-1}}{\ROO_m}+1-\ROO_{m-1}\Big)^{-1}\\
   &=(1-\ROO_{m-1})^{-\plannedwidth_m+1}\big((m+1)^2+1-\ROO_{m-1}\big)^{-1},\ m\in\N.
  \end{align}
 We see that the first term converges, while the second one is summable, so that we can conclude using the Borel-Cantelli-Lemma.

 The probability of the second event computes just the same way, only the limit of the leading term is some other constant. 

 For the last event, we observe that 
 \begin{equation}
  P\big(G_m^j>(\ROO_m)^{-\alpha}\big)=(1-\ROO_m)^{\FF{\ROO_m^{-\alpha}+1}}=\Big[\Big(1-\frac1{(m+1)!^2}\Big)^{(m+1)!^2}\Big]^{\frac{\FF{(m+1)!^{2\alpha}}+1}{(m+1)!^{2}}}\sim e^{-(m+1)!^{2\alpha-2}}
 \end{equation}
 is indeed summable as well.
\end{proof}
\begin{cor}\thlabel{corev}
 The event 
 \begin{equation}
  \big\{G_{m-1}^0+\plannedwidth_m\le G_m^0\le(\ROO_m)^{-\alpha}\big\}\cap\big\{G_{m-1}^1+2\plannedwidth_m-\plannedwidth_{m-1}\le G_m^1\le(\ROO_m)^{-\alpha}\big\}\label{eventually}
 \end{equation}
 holds $P$--a.s.\ eventually. Hence, $P$--a.s.,
 \begin{equation}
  M(\mathbf X):=\min\bigg\{m'\ge5\big|\omega\in\smashoperator[l]{\bigcap_{m=m'}^\infty}\splitfrac{\{G_{m-1}^0+\plannedwidth_m\le G_m^0\le\ROO_m^{-\alpha}\}}{\cap\{G_{m-1}^1+2\plannedwidth_m-\plannedwidth_{m-1}\le G_m^1\le\ROO_m^{-\alpha}\}}\bigg\}<\infty.\label{dfnM}
 \end{equation}
\end{cor}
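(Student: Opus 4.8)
The plan is to deduce the Corollary directly from \thref{threeevents} by observing that the event in \eqref{eventually} is (up to a harmless reindexing) the complement of a finite union of the events shown there to occur only finitely often. First I would note that the event
\[
 \big\{G_{m-1}^0+\plannedwidth_m\le G_m^0\big\}
\]
is exactly the complement of $\{G_{m-1}^0\ge G_m^0-\plannedwidth_m+1\}$, since for integer-valued random variables the negation of $G_{m-1}^0\ge G_m^0-\plannedwidth_m+1$ is $G_{m-1}^0\le G_m^0-\plannedwidth_m$. Likewise, using $\plannedwidth_{m-1}-\plannedwidth_m+\plannedwidth_m=\plannedwidth_{m-1}$, one rewrites the second event of \thref{threeevents} so that its complement is $\{G_{m-1}^1+2\plannedwidth_m-\plannedwidth_{m-1}\le G_m^1\}$; here one just has to move $\plannedwidth_m-\plannedwidth_{m-1}$ carefully from one side to the other, keeping in mind the ``$+1$'' coming from the strict-versus-weak inequality swap for integers. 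Finally, $\{G_m^j\le(\ROO_m)^{-\alpha}\}$ is precisely the complement of $\{G_m^j>(\ROO_m)^{-\alpha}\}$ for $j\in\{0,1\}$.

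Next I would invoke \thref{threeevents}: each of the four events (the two ``$G_{m-1}$-spacing'' events and the two ``$G_m^j$ too large'' events for $j=0,1$) happens $P$--a.s.\ only finitely often in $m$. A finite union of events each occurring only finitely often still occurs only finitely often, hence $P$--a.s.\ there is a (random) $m'$ beyond which none of the four events occurs; for all $m\ge m'$ the intersection in \eqref{eventually} holds. This is the assertion that \eqref{eventually} holds $P$--a.s.\ eventually.

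For the second claim, the definition of $M(\mathbf X)$ in \eqref{dfnM} is simply the least $m'\ge5$ such that \eqref{eventually} holds for every $m\ge m'$; the ``eventually'' statement just proved guarantees that on a set of full $P$--measure such an $m'$ exists, so the minimum is taken over a nonempty set and $M(\mathbf X)<\infty$ $P$--a.s. The lower bound $m'\ge5$ in the definition is immaterial for finiteness, since if \eqref{eventually} holds from some $m_0$ on it also holds from $\max\{m_0,5\}$ on.

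I do not expect any real obstacle here; the only point demanding a little care is the bookkeeping with the floor/strict-inequality ``$+1$'' terms and the shift by $\plannedwidth_m-\plannedwidth_{m-1}$ when matching the event in \eqref{eventually} to the complements of the events in \thref{threeevents}. Once that identification is made cleanly, the Corollary is an immediate consequence of a finite union of null-probability ``infinitely often'' events being null.
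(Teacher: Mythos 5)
Your proposal is correct and is exactly the intended argument; the paper gives no explicit proof because it regards \eqref{eventually} as the (integer-valued) complement-intersection of the events in \thref{threeevents}, followed by the standard observation that a finite union of almost-surely-finitely-often events is itself almost surely finitely often, and finiteness of $M(\mathbf X)$ follows. Your bookkeeping of the ``$+1$'' shifts when passing from $\ge$ to $\le$ and the remark about the immaterial cutoff $m'\ge5$ are both accurate.
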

$M=M(\mathbf X)$ is the superlevel from which the event defined in \eqref{eventually} always holds. The restriction to $m'\ge5$ is made so that we do not have to worry about whether we can divide streets into four lanes, and subdivide lanes in four equal parts: already $\plannedwidth_4=576=36*16$.

There is a picture relating the terms of the event \eqref{eventually} in Figure \ref{eventuallyfigure}.
\begin{figure}[htb]
 \begin{center} 
  \begin{tikzpicture}
 \begin{scope}
  \clip(-0.3,-0.3)rectangle(9.5,8.5);
  \filldraw[black!10!white](-0.3,-0.3)rectangle(3,1);
  \filldraw[black!20!white](3,-0.3)rectangle(4,1);
  \filldraw[black!30!white](-0.3,1)rectangle(6,2);
  \filldraw[black!40!white](6,-0.3)rectangle(9,5);
  \filldraw[black!50!white](-0.3,5)rectangle(10,8);
  \foreach\xx in {3,4}\draw(\xx,-0.3)--(\xx,1);
  \foreach\xx in {6,9}\draw(\xx,-0.3)--(\xx,5);
  \foreach\yy in {1,2}\draw(-0.3,\yy)--(6,\yy);
  \foreach\yy in {5,8}\draw(-0.3,\yy)--(10,\yy);
 \end{scope}
 \draw[very thin,<->](-0.3,0)--(9.5,0);
 \draw[very thin,<->](0,-0.3)--(0,8.5);
 \draw(0,0)node[anchor=north east]{$0$};
 \draw(4,0)node[anchor=north,yshift=-0.3cm]{$G_{m-1}^0$};
 \draw(8,0)node[anchor=north,yshift=-0.3cm]{$G_m^0$};
 \draw(0,2)node[anchor=east,xshift=-0.3cm]{$G_{m-1}^1$};
 \draw(0,7)node[anchor=east,xshift=-0.3cm]{$G_m^1$};
 \draw[thick,decorate,decoration={brace,mirror,raise=1pt}](4,1)--node[anchor=north,below=3pt]{$\ge0$}(6,1);%
 \draw[thick,decorate,decoration={brace,raise=1pt}](6,4)--node[anchor=east,left=3pt]{$\ge0$}(6,5);%
 \draw[thick,decorate,decoration={brace,mirror,raise=1pt}](0,-1)--node[anchor=north,below=3pt]{$\le(\ROO_m)^{-\alpha}$}(9,-1);%
 \draw[thick,decorate,decoration={brace,raise=1pt}](-1.5,0)--node[anchor=east,left=3pt]{$\le(\ROO_m)^{-\alpha}$}(-1.5,8);%
 \draw[|<->|](6,0.3)--node[anchor=south]{$\plannedwidth_m$}(9,0.3);
 \draw[|<->|](6.1,1)--node[anchor=west]{$\plannedwidth_m$}(6.1,4);
 \draw[|<->|](5.7,1)--node[anchor=east]{$\plannedwidth_{m-1}$}(5.7,2);
 \draw[|<->|](3,5)--node[anchor=east]{$\plannedwidth_m$}(3,8);
\end{tikzpicture}
 \end{center}
\caption{The implications of the event in \protect\eqref{eventually}.}\label{eventuallyfigure}
\end{figure}
\begin{lem}\thlabel{firstaxisright}
 It holds $P$--a.s.\ that for all $x\in\N$ and all $m'>m\ge M$,
 \begin{equation}
  \IG^{\mathbf X}(xe_0)\neq2m+1\text{ and }\IG^{\mathbf X}(G_m^0e_0)\neq 2m'.
 \end{equation}
\end{lem}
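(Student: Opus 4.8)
The plan is to unpack the definitions of $W^j_x$ and $V^j_x$ on the coordinate axes and to translate the event in \thref{corev} into statements about where streets of various superlevels can and cannot appear. First I would recall that, by \eqref{W}, for $x\in\Nnull$ we have $W_x^j=2\max\{m:\exists y,\ x\le y<x+\plannedwidth_m,\ \mathbf x(y,2m+j,0)=1\}+j$, so the sublevel of $W_x^j$ is always $j$; in particular $W_x^0$ is always even. The passage from $W$ to $V$ in \eqref{funnelW} only ever replaces $W_x^j$ by $0$, never by a value of a different sublevel, so $V_x^0$ is always even as well. By the first two displayed equalities in the proof of \thref{InitgridStreetgrid}, on the zeroth coordinate axis $\IG(xe_0)\in\{\IG(0)\}\cup\{V_x^0:x\ge Y^0_{\RL{\IG(0)}}\}$, and every value occurring there is either $\IG(0)$ (which, as a value of some $V^j$, has sublevel $0$ or $1$) or some $V_x^0$, which has sublevel $0$. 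Hence to get $\IG(xe_0)=2m+1$ we would need $\IG(0)=2m+1$, i.e.\ the value at the origin comes from $V^1$ with superlevel $m$; I would rule this out for $m\ge M\ge5$ by noting that $\IG(0)=\max_j V_0^j$ and that, on the event in \eqref{eventually}, the first $1$ of $\mathbf X(\cdot,2m+j,0)$ on $\Nnull$ occurs at $G_m^j\ge G_{m-1}^j+\plannedwidth_m>0$ for every $m\ge M$, so no interval of superlevel $\ge M$ starting at a point $\ge0$ covers the origin; thus $\IG(0)<2M$, and for $m\ge M$ we cannot have $\IG(xe_0)=2m+1$ for any $x\in\N$. (A small point to check: whether $\IG(0)$ could pick up a high superlevel from a point $y<0$; but on the axis $x\ge0$ only the processes restricted to $\Nnull$ matter for $W^0_x$ when $x\ge0$, and the "seen from $0$" monotonicity in \thref{WMI} together with $V_0^j\le\IG(0)<2M$ handles this.)

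For the second assertion, $\IG(G_m^0 e_0)\neq 2m'$ for $m'>m\ge M$, I would argue directly from the structure of $W^0$ and $V^0$ near the point $G_m^0$. By definition $G_m^0$ is the first $1$ of the superlevel-$m$ process on $\Nnull$, so the interval of width $\plannedwidth_m$ attached at $G_m^0$ forces $W^0_{G_m^0}\ge 2m$. The only way to have $\IG(G_m^0e_0)=2m'$ with $m'>m$ is that some interval of superlevel $m'$ covers the point $G_m^0$, i.e.\ there is $y$ with $G_m^0-\plannedwidth_{m'}<y\le G_m^0$ and $\mathbf X(y,2m'+0,0)=1$. Since $m'>m\ge M$, on the event \eqref{eventually} the first such $1$ on $\Nnull$ is at $G_{m'}^0\ge G_{m}^0+\sum_{i=m+1}^{m'}\plannedwidth_i\ge G_m^0+\plannedwidth_{m'}$ (using $G_{i-1}^0+\plannedwidth_i\le G_i^0$ telescoped, and $\plannedwidth_i$ increasing), hence $y\ge G_{m'}^0>G_m^0$, contradicting $y\le G_m^0$; and no interval of superlevel $m'$ can reach $G_m^0$ from a point $y<0$ either, because $G^0_{m'}>0$. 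Therefore $W^0_{G_m^0}$ — and hence, since $V$ only lowers values, $V^0_{G_m^0}$ and $\IG(G_m^0e_0)$ — cannot equal $2m'$. Here I should be slightly careful that passing to $\IG$ (through $V$ and through the two-dimensional maximum/indicator in \eqref{funnelLu}) could in principle return $0$ rather than $2m'$, but $0\neq 2m'$, so that only helps.

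The main obstacle I anticipate is bookkeeping around the boundary cases: making sure that the inequalities from \eqref{eventually}, which are stated for the axis processes $\mathbf X(\cdot,2m+j,0)$, really do preclude a \emph{higher} superlevel street covering the relevant lattice point, and in particular handling intervals attached at negative coordinates and the interaction with the "weakly monotone seen from $0$" truncation in \eqref{funnelW}–\eqref{funnelLu}. Once one checks that on the first quadrant (and on the axis) the value $\IG(xe_0)$ is genuinely determined by $V^0$ restricted to $[0,x]$ together with $\IG(0)$, as extracted from the proof of \thref{InitgridStreetgrid}, the sublevel-parity argument and the width/length telescoping give both claims. I would close by remarking that $M\ge5$ is used only to keep all $\plannedwidth_m$ comfortably large, and that the symmetric statements for the first coordinate axis (with $\plannedwidth_{m-1}$ corrections as in the second event of \thref{threeevents}) follow verbatim.
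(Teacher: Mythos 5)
Your proposal is correct and takes essentially the same route as the paper's proof: both use the $P$--a.s.\ inequalities behind $M$ from \thref{corev} to bound the $G_m^j$ from below, deducing that for $m\ge M$ the first sublevel-$1$ street of superlevel $m$ (lying in $\{u_1\ge G_m^1-\plannedwidth_m+1\ge1\}$) never reaches the $e_0$--axis, and that no vertical street of superlevel $m'>m$ reaches $G_m^0e_0$. One bookkeeping slip in your second paragraph: by \eqref{W}, a superlevel-$m'$ interval covers $G_m^0$ iff there is $y$ with $G_m^0\le y<G_m^0+\plannedwidth_{m'}$ and $\mathbf X(y,2m',0)=1$ --- the interval of width $\plannedwidth_{m'}$ is attached to the \emph{left} of the highlighted point $y$ --- not $G_m^0-\plannedwidth_{m'}<y\le G_m^0$ as you wrote; with the direction corrected the negative-$y$ case you worry about never arises (since $y\ge G_m^0>0$), and the telescoped bound $G_{m'}^0\ge G_m^0+\plannedwidth_{m'}$ closes the argument exactly as in the paper.
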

\begin{proof}
 Take any $m\ge M$. We have
 \begin{equation}
  G_m^1\ge G_{m-1}^1+2\plannedwidth_m-\plannedwidth_{m-1}\ge2\plannedwidth_m-\plannedwidth_{m-1}\ge\plannedwidth_m.
 \end{equation}
 $\mathbf X(G_m^1,2m+1,0)=1$ induces a (part of a) street of superlevel $m$, with planned width $\beta_m$. Thus, this street of sublevel $1$ does not reach the zeroth axis. 

 Now take $m'>m$. We know that $G_{m'}^0\ge G_{m'-1}^0+\plannedwidth_{m'}\ge G_m^0+\plannedwidth_{m'}$. This shows that any vertical street of higher level does not reach $G_m^0e_0$.
\end{proof}
\begin{lem}\thlabel{evlevelright}
 \begin{equation}
  \SG_{G_m^je_j}=2m+j,\ m\ge M,\ j\in\{0,1\}.
 \end{equation}
\end{lem}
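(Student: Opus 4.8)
\emph{Approach.} The plan is to reduce the claim to a statement about the initial grid. In the asphalting step only points lying in \emph{fields} (streets of level $0$) of $\IG^{\mathbf X}$ ever get modified, all other streets remaining untouched; each iterate $L^i$ agrees with $\IG^{\mathbf X}$ on every point $u$ with $\IG^{\mathbf X}(u)\neq0$, because such a $u$ lies in no field and hence contributes nothing to $\sum_{B\text{ field}}L^{l^i(B),B}$. Since $2m+j\neq0$, the point $G_m^je_j$ lies in a genuine street of $\IG^{\mathbf X}$, whose level equals its common $\IG^{\mathbf X}$-value; therefore $\SG_{G_m^je_j}=\IG^{\mathbf X}(G_m^je_j)$, and it suffices to prove $\IG^{\mathbf X}(G_m^je_j)=2m+j$ for $m\ge M$, $j\in\{0,1\}$.

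\emph{One-dimensional computation.} First I would pin down $W_\cdot^j$ near $G_m^j$. The $1$ at $(G_m^j,2m+j,0)$ gives $W_{G_m^j}^j\ge2m+j$. The spacing estimate contained in the event of \thref{corev} (which holds for all $m\ge M$) gives, for every $m'>m$,
\begin{equation}
 G_{m'}^j\ \ge\ G_{m'-1}^j+\plannedwidth_{m'}\ \ge\ G_m^j+\plannedwidth_{m'}
\end{equation}
(for $j=1$ one uses $2\plannedwidth_{m'}-\plannedwidth_{m'-1}\ge\plannedwidth_{m'}$ in the first step, together with monotonicity of $m''\mapsto G^j_{m''}$ for $m''\ge M$). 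As $G_{m'}^j$ is the leftmost $1$ at level $2m'+j$, no window $\{x,\dots,x+\plannedwidth_{m'}-1\}$ with $0\le x\le G_m^j$ contains such a $1$; hence $W_x^j\le2m+j$ for all $0\le x\le G_m^j$. In particular $W_{G_m^j}^j=2m+j$, and since $G_m^j\ge\plannedwidth_m>0$ the second maximum in \eqref{funnelW} is over the empty set, so the running-maximum condition defining $V^j$ is met at $G_m^j$ and $V_{G_m^j}^j=2m+j$; moreover $V_x^j\le W_x^j\le2m+j$ for all $0\le x<G_m^j$.

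\emph{Feeding this into $\IG$.} It remains to evaluate \eqref{funnelLu} at $u=G_m^je_j$, i.e.\ $u_j=G_m^j$, $u_{1-j}=0$. For the transverse coordinate $V_0^{1-j}=W_0^{1-j}$, and $W_0^{1-j}$ has superlevel at most $M-1$: a superlevel $m'\ge M$ would require a $1$ at level $2m'+(1-j)$ inside $\{0,\dots,\plannedwidth_{m'}-1\}$, contradicting $G_{m'}^{1-j}\ge\plannedwidth_{m'}$. Hence $V_0^{1-j}\le2M-1<2m+j$, so $V_{u_0}^0\vee V_{u_1}^1=2m+j$. In the indicator of \eqref{funnelLu} the $j'=1-j$ term is a maximum over an empty index set (because $u_{1-j}=0$), hence $-\infty$, while the $j'=j$ term is $\max_{0\le x<G_m^j}V_x^j\le2m+j$ by the bound just obtained. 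Thus the indicator equals $1$ and $\IG^{\mathbf X}(G_m^je_j)=2m+j$, which finishes the proof.

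\emph{Main obstacle.} The only genuinely delicate point is the uniform window bound $W_x^j\le2m+j$ for $0\le x\le G_m^j$: it says exactly that no street of higher level can ``reach back'' to the axis point $G_m^je_j$, and it is where the argument leans entirely on the spacing estimate of \thref{corev}. Everything else is bookkeeping with the maxima in \eqref{funnelW} and \eqref{funnelLu} and the fact that the transverse coordinate of $G_m^je_j$ is $0$.
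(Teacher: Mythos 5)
Your proof is correct. At a high level it takes the same route as the paper: establish $\IG^{\mathbf X}(G_m^je_j)=2m+j$ via the one--dimensional processes $W^j$ and $V^j$ together with the spacing estimate of \thref{corev}, and then transfer to $\SG$ by observing that the asphalting step never touches a point with nonzero $\IG$-value. The organizational difference is that the paper dispatches the upper bound $\IG^{\mathbf X}(G_m^je_j)\le 2m+j$ by appealing to the separately proved \thref{firstaxisright} (ruling out the two possible ``overshoot'' cases $2m'+1$ with $m'\ge m$ and $2m'$ with $m'>m$), whereas you inline the same spacing computation directly: you show $W_x^j\le 2m+j$ for all $0\le x\le G_m^j$, which simultaneously pins down $W_{G_m^j}^j=2m+j$ and controls $\max_{0\le x<G_m^j}V_x^j$. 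That makes your version self-contained (no dependence on \thref{firstaxisright}) and also explicitly verifies both the indicator in \eqref{funnelW} at $x=G_m^j$ and the indicator in \eqref{funnelLu} at $u=G_m^je_j$, neither of which the paper's proof spells out. In short: same mechanism, but a cleaner decomposition that fills in the two implicit indicator checks. A minor stylistic remark: your opening sentence (``Since $2m+j\neq0$, the point $G_m^je_j$ lies in a genuine street of $\IG^{\mathbf X}$'') reads as if the reduction already used $\IG^{\mathbf X}(G_m^je_j)=2m+j$; it would be cleaner to state the reduction as the implication ``if $\IG^{\mathbf X}(u)\neq 0$ then $\SG_u=\IG^{\mathbf X}(u)$'' and apply it only after the $\IG$-value is computed, which is of course what you actually do.
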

\begin{proof}
 We prove the case $j=0$. 

 Take $m\ge M$. As $G_m^0$ is a natural number such that $\mathbf X\big(G_m^0,2m,0\big)=1$, we have $W_{G_m^0}^0(\mathbf X)\ge2m$. $(G_n^0)_{n\ge M}$ is an increasing sequence. Thus, it holds that $V_{G_m^0}^0\ge2m$. This implies that 
 \begin{equation}
  \IG^{\mathbf X}(G_m^0e_0)\ge2m.\label{greaterstillpossible}
 \end{equation}
 The case ``$>$'' subdivides into the two 
 \begin{itemize}
  \item $\IG^{\mathbf X}(G_m^0e_0)=2m'+1$ for some $m'\ge m$,
  \item $\IG^{\mathbf X}(G_m^0e_0)=2m'$ for some $m'>m$,
 \end{itemize}
 which both are excluded by \thref{firstaxisright}. Hence, equality holds in \eqref{greaterstillpossible}. $G_m^j$ depends only on $\mathbf X(\cdot,\cdot,0)$. As all streets that are not fields w.r.t.\ $\IG$ remain untouched in the construction of the final streetgrid, the equality holds for $\SG_{G_m^0e_0}$ as well.

 Similar observations can be made for points of the form $G_m^1e_1$, $m\ge M$, using an adapted version of \thref{firstaxisright}.
\end{proof}
\begin{dfn}\thlabel{Srndindep}
 Set
 \begin{equation}
  B_m^j:=\Srnd^{\IG}(G_m^je_j)=\Srnd^{\SG}(G_m^je_j),\ m\ge M,\ j\in\{0,1\}.
 \end{equation}
\end{dfn}
\begin{cor}\thlabel{widthandlength}
 For all $m\ge M$,the width $b_j'(B_m^j)-b_j(B_m^j)+1$ of $B_m^j$ is larger than or equal to $\plannedwidth_m$, while the length of the intersection of $B_m^j$ and the first quadrant, $(\UR B_m^j)_i$, $i\neq j$, $i,j\in\{0,1\}$, satisfies
 \begin{equation}
  \ROO_{m+1}^{-\alpha}\ge\begin{cases}
                          (\UR B_m^0)_1\\
                          (\UR B_m^1)_0.
                         \end{cases}
 \end{equation}
\end{cor}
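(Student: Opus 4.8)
The plan is to unwind the definitions of $B_m^j$ and trace through how the street around $G_m^je_j$ was placed. I will only discuss $j=0$; the case $j=1$ follows by the obvious modification (using the second, rather than the first, component in all the estimates and the adapted version of \thref{firstaxisright}).

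First I would recall from \thref{evlevelright} that $\SG_{G_m^0e_0}=2m$, i.e.\ the street-or-field $B_m^0$ around $G_m^0e_0$ is a genuine street of level $2m$ (superlevel $m$, sublevel $0$), and — since streets that are not fields w.r.t.\ $\IG$ are left untouched in the asphalting — it is already a street of level $2m$ w.r.t.\ $\IG^{\mathbf X}$. Next, by the structure of $\IG$ established in the proof of \thref{InitgridStreetgrid}, a point $u$ with $\IG(u)=2m$ lies in a rectangular area that received its value from $V^0_{u_0}$; here $V^0_{G_m^0}=2m$ because $\mathbf X(G_m^0,2m,0)=1$ attaches an interval of width exactly $\plannedwidth_m$ to the left of $G_m^0$, and by \eqref{eventually} (valid for $m\ge M$) this interval is not overwritten by a higher-level interval on the stretch $[G_{m-1}^0,G_m^0]$, since $G_m^0\ge G_{m-1}^0+\plannedwidth_m$ forces the previous superlevel-$m$ point to be far enough to the left, and intervals of superlevel $>m$ start only at $G_{m'}^0\ge G_m^0+\plannedwidth_{m'}>G_m^0$. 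Hence the set $\{x:V^0_x=2m\}$ contains a block of at least $\plannedwidth_m$ consecutive integers ending at $G_m^0$, which gives the width bound $b_0'(B_m^0)-b_0(B_m^0)+1\ge\plannedwidth_m$.

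For the length bound I would argue that in the first quadrant the street $B_m^0$ extends in the $e_1$-direction from the zeroth axis upward until it is "blocked" by a street coming from the $e_1$-axis, and the first such blocking street of level $>2m$ is the one induced at $G^1_{m}e_1$ (using that, by \thref{firstaxisright} and \thref{evlevelright}, no street of odd level $2m+1$ or of level $2m'>2m$ obstructs $G_m^0e_0$, and that by \eqref{eventually} the street of superlevel $m$ on the $e_1$-axis sits at height $G_m^1$ with $G_m^1\le\ROO_m^{-\alpha}$, while lower superlevels $m'<m$ produce streets of level $<2m$ which do not block). Therefore $(\UR B_m^0)_1\le G_m^1\le\ROO_m^{-\alpha}\le\ROO_{m+1}^{-\alpha}$, the last inequality because $\ROO_{m+1}<\ROO_m$. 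This is the desired estimate.

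The main obstacle is the length bound, i.e.\ making precise which street actually blocks $B_m^0$ from above: one has to rule out both that a lower-level street ($m'<m$) could still form a boundary street of $B_m^0$ on the $e_1$-side (it cannot, since blocks in $\IG$ are surrounded by streets of strictly higher level, and any superlevel-$m'$ street with $m'<m$ has level $<2m$), and that the superlevel-$m$ street from the $e_1$-axis might start below $G_m^1$ because of an overlap with the superlevel-$(m-1)$ interval; this is exactly what the term $G_{m-1}^1+2\plannedwidth_m-\plannedwidth_{m-1}\le G_m^1$ in \eqref{eventually} was designed to prevent, and I would invoke it here. Everything else is routine bookkeeping with the definitions \eqref{W}, \eqref{funnelW}, \eqref{funnelLu} and the arithmetic $\ROO_{m+1}\le\ROO_m$.
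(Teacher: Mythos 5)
Your proof is correct and takes essentially the same route as the paper: the paper's own proof of this corollary is a one-line pointer to ``the same type of arguments as in \thref{firstaxisright,evlevelright}'' plus the bounds from \eqref{eventually}, and you have filled in precisely those arguments, tracing $V^0_\cdot$ through the interval $[G_m^0-\plannedwidth_m+1,G_m^0]$ for the width and identifying the blocking street at $G_m^1e_1$ (respectively $G_{m+1}^0e_0$) for the length, finishing with $\ROO_m^{-\alpha}\le\ROO_{m+1}^{-\alpha}$. No gap; the only thing worth flagging is that the $j=0$ and $j=1$ cases are not quite mirror images (the former gives the stronger bound $\ROO_m^{-\alpha}$, the latter exactly $\ROO_{m+1}^{-\alpha}$), which your appeal to the ``obvious modification'' quietly subsumes.
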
 
\begin{proof}
 Both assertions follow from the same type of arguments as in the proof of the \thref{firstaxisright,evlevelright}; the second one makes also use of the upper bounds provided by \eqref{eventually}.
\end{proof}
\begin{cor}\thlabel{evstreetsjoin}
 It holds for all $m\ge M$ that 
 \begin{equation}
  \Srnd(\UR B_m^0+e_1)=B_m^1\text{ and }\Srnd(\UR B_m^1+e_0)=B_{m+1}^0.
 \end{equation}
\end{cor}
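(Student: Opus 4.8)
The plan is to establish both equalities first for $\Srnd^{\IG}$, reading $B_m^0$, $B_m^1$, $B_{m+1}^0$ as streets w.r.t.\ $\IG^{\mathbf X}$, and then to transfer them: by \thref{Srndindep} these three boxes are streets (not fields) w.r.t.\ $\IG$, hence carried over unchanged to $\SG$, and the points $\UR B_m^0+e_1$, $\UR B_m^1+e_0$ do not move, so $\Srnd^{\SG}(\cdot)=\Srnd^{\IG}(\cdot)$ on them. Throughout I would work on the first quadrant and use the explicit description of $\IG$ obtained in the proof of \thref{InitgridStreetgrid}, together with \thref{corev,firstaxisright,evlevelright}; recall from \eqref{occurrenceandwidth} that $\plannedwidth_m=m!^2$.

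For the first equality I would locate the top face of the vertical street $B_m^0$ (of level $2m$). By \thref{evlevelright} the column $x=G_m^0$ lies in $B_m^0$, and along any column of $B_m^0$ the first--quadrant formula assigns the value $2m$ to precisely the heights $y\ge0$ with $V_z^1<2m+1$ for all $0\le z\le y$; since this blocking height is the same for every column, $(\UR B_m^0)_1=y^*-1$ where $y^*:=\min\{y\ge0:V_y^1\ge2m+1\}$. The next step is to show $y^*=G_m^1-\plannedwidth_m+1$: the point $G_m^1$ induces a sublevel $1$ street of superlevel $m$ covering the heights $[G_m^1-\plannedwidth_m+1,G_m^1]$, so $W^1\equiv 2m+1$ there, whereas for $y<G_m^1-\plannedwidth_m+1$ one has $W_y^1\le2m-1$, because $G_m^1$ is by definition the lowest superlevel $m$ point and, by the lower bounds in the event \eqref{eventually} of \thref{corev}, the first superlevel $(m+1)$ point $G_{m+1}^1$ (and a fortiori all later and all higher ones) lies so far above that its street cannot reach down into that range; consequently the indicator in \eqref{funnelW} equals $1$ on $[G_m^1-\plannedwidth_m+1,G_m^1]$, giving $V^1\equiv 2m+1$ there and $V_y^1<2m+1$ strictly below. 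The same computation identifies the lower endpoint of the vertical extent of $B_m^1=\Srnd^{\IG}((0,G_m^1))$ as $b_1(B_m^1)=G_m^1-\plannedwidth_m+1=y^*$. Finally I would check that every column of $B_m^0$ lies in the horizontal extent of $B_m^1$: on $B_m^0$ and to its left (down to $0$) one has $V_x^0\le2m$ by the ``seen from $0$'' property already used in \thref{evlevelright}, so no $x\le b_0'(B_m^0)$ satisfies $V_x^0\ge2m+2$, hence the first such $x$ -- which is exactly where $B_m^1$ is blocked on its right -- exceeds $b_0'(B_m^0)$. Combining the three facts, $\UR B_m^0+e_1=\bigl(b_0'(B_m^0),\,b_1(B_m^1)\bigr)$ lies in the rectangle $B_m^1$, i.e.\ $\Srnd^{\IG}(\UR B_m^0+e_1)=B_m^1$.

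The second equality is the mirror image, with the two coordinate directions interchanged and applied at superlevel $m+1$ (again $\ge M$): one finds that the right face of the horizontal street $B_m^1$ sits at $b_0'(B_m^1)=G_{m+1}^0-\plannedwidth_{m+1}=b_0(B_{m+1}^0)-1$, while the vertical extent $[b_1(B_m^1),b_1'(B_m^1)]$ of $B_m^1$ is contained in that of the vertical street $B_{m+1}^0$: its lower endpoint is positive whereas $b_1(B_{m+1}^0)\le0$, and its upper endpoint is at most $G_{m+1}^1-\plannedwidth_{m+1}$, which by the same use of \eqref{eventually} is the top face $b_1'(B_{m+1}^0)$ of $B_{m+1}^0$ (the street capping $B_{m+1}^0$ from above being the sublevel $1$ street of level $\ge2m+3$ induced at $G_{m+1}^1$). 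Hence $\UR B_m^1+e_0=\bigl(b_0(B_{m+1}^0),\,b_1'(B_m^1)\bigr)\in B_{m+1}^0$, and the passage to $\SG$ concludes.

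I expect the main obstacle to be the bookkeeping hidden in ``cannot reach down into that range'': one must verify that in the interval of heights around $G_m^1$ (resp.\ of abscissas around $G_{m+1}^0$) relevant to the blocking, the only superlevel contributing to $W^1$ (resp.\ $W^0$) is the intended one -- equivalently, that consecutive streets of the same axis are separated by gaps of at least a planned width. This is precisely the content of the bounds $G_{m-1}^j+2\plannedwidth_m-\plannedwidth_{m-1}\le G_m^j$ furnished by \thref{corev}, so the residual work is a short induction on superlevels showing the relevant intervals stay disjoint, plus re--checking the ``seen from $0$'' indicators, for which \thref{firstaxisright,evlevelright} apply essentially verbatim.
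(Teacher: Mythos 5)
Your argument is correct and relies on the same essential input as the paper — the separation bounds of \thref{corev} — but it unpacks the paper's short structural argument into explicit coordinate computations. The paper's proof argues by elimination: $B_m^0$ extends vertically until blocked by a horizontal street of strictly higher level; that blocker has odd level $\ge 2m+1$, hence superlevel $\ge m$; and by \eqref{eventually} the horizontal streets of superlevel $>m$ lie too far above to do the blocking, so the blocker must be $B_m^1$. You instead locate the top face of $B_m^0$ and the bottom face of $B_m^1$ directly, identifying both with $y^*=G_m^1-\plannedwidth_m+1$, and you verify separately that $B_m^1$ extends far enough to the east to contain the column $b_0'(B_m^0)$. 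The two routes are equivalent in substance; yours makes explicit the exact blocking height and the eastern reach of $B_m^1$, facts the paper leaves implicit, at the cost of more bookkeeping. The bookkeeping concern you flag at the end — that only the intended superlevel contributes to $W^1$ near $G_m^1$ — does indeed follow by telescoping the bounds of \thref{corev}, which give $G_{m'}^1-G_m^1\ge 2\plannedwidth_{m'}-\plannedwidth_m\ge\plannedwidth_{m'}$ for all $m'>m\ge M$, so the superlevel-$m'$ interval of influence stays strictly above the superlevel-$m$ one.
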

\begin{proof}
 We prove only the first assertion. 

 Let $m\ge M$. As we have seen in \thref{evlevelright}, $\levelof(B_m^0)=2m$.

 By definition, the street $B_m^0$ extends vertically until it is blocked by some horizontal higher--level--street. The superlevel of this street is greater as or equal to $m$, otherwise there would be no blocking. Any horizontal street $B_{m'}^1$ of level $m'>m$ does not interfere with $B_m^1$, because the $G_\cdot^1$ all keep their distance from each other (see \eqref{eventually}). So, the blocking indeed happens by $B_m^1$. 
\end{proof}
\subsection{Stationarity}\label{stationarity}
\begin{ntn}\thlabel{notation}
 Take $F:\{0,1\}^\calZ\rightarrow\Nnull^{\Z^2}$ a function. Note that the values $F(\configX):\Z^2\rightarrow\Nnull$ of this function are themselves functions $u\mapsto F(\configX)_u$. Let $I\subseteq\calZ$, $D\subseteq\Z^2$. For $\configx\in\{0,1\}^I$, $g\in\Nnull^D$, by the notation 
 \begin{equation}
  F(\configx)|_D=g, \label{F(x)|_D=g}
 \end{equation}
 we shall express that
 \begin{equation}
  \text{for all }\configX\in\{0,1\}^\calZ\text{ such that }\configX|_I=\configx\text{, it holds that }F(\configX)_u=g_u\text{ for all }u\in D.
 \end{equation}
 Here, $\configX|_I:I\rightarrow\{0,1\}$ denotes the usual restriction of the function $\configX:\calZ\rightarrow\{0,1\}$ on $I$. Notation \eqref{F(x)|_D=g} however is more restrictive than a mere restriction, because it is understood that on $D$, $F(\cdot)$ does not depend on the values at places in $\calZ\setminus I$.

 Also define $0|_I$ to be the constant mapping that assigns $0$ to any element in $I$.
\end{ntn}
\begin{lem}\thlabel{blockaround}
 For any box $B\ni0$, there is $P$--a.s.\ a block w.r.t.\ $\SG(\mathbf X)$ containing $B$:
 \begin{equation}
  P\Big(\bigcup_{\substack{D\subseteq\Z^2:\\B\subseteq D}}\{D\text{ is block w.r.t.\ }\SG(\mathbf X)\}\Big)=1.
 \end{equation}
\end{lem}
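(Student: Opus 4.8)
The strategy is to find, $P$-almost surely, a single block w.r.t.\ $\SG(\mathbf X)$ that is large enough to contain the given box $B$. The natural candidates for such blocks are the fields w.r.t.\ $\IG^{\mathbf X}$: by \thref{InitgridStreetgrid} these are rectangular regions of constant $\IG$-value $0$ surrounded by four streets of strictly higher level, hence they are blocks w.r.t.\ $\IG$, and (as discussed after \thref{responsibility}) a field w.r.t.\ $\IG$ remains a block throughout the asphalting construction, so it is a block w.r.t.\ $\SG(\mathbf X)$ as well. Thus it suffices to show that $P$-a.s.\ there is a field w.r.t.\ $\IG^{\mathbf X}$ containing $B$. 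Actually it is cleaner to aim a little higher: I will show that $P$-a.s.\ there is \emph{some} block $D$ w.r.t.\ $\IG^{\mathbf X}$ with $B\subseteq D$, using the geometry of the initial grid near the origin.

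First I would fix $B\ni 0$ a box, say $B=\Btwn(v,w)$ with $v_j\le 0\le w_j$, and enlarge it to a symmetric box $B'=\{-N,\dots,N\}^2\supseteq B$ for a suitable $N$. Recall from the proof of \thref{InitgridStreetgrid} that in each quadrant the value $\IG(u)$ is governed by $V_{u_0}^0\vee V_{u_1}^1$, and that $\IG$ is ``weakly monotonically increasing seen from $0$''; in particular $\IG(0)=\max_j V_0^j$ and the region of constant value $\IG(0)$ adjacent to the origin is exactly $\{-a_0,\dots,b_0\}\times\{-a_1,\dots,b_1\}$ for certain (random, a.s.\ finite) radii determined by the $Y$-quantities of \eqref{Ymj} and their negative analogues. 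The key point is that these radii can be made arbitrarily large with positive probability, and in fact the real claim I need is: for every $N$, with probability one there is \emph{some} level $l$ and some street/field configuration producing a block of inner radius $\ge N$ around $0$. The honest way to get this almost surely (not just with positive probability) is to observe that the event ``there is a block w.r.t.\ $\IG^{\mathbf X}$ containing $B'$'' is a tail-type event one can force by looking at the first streets of high superlevel: by \thref{threeevents} and \thref{corev}, for $m\ge M(\mathbf X)$ the first streets of superlevel $m$ on the two axes sit at distances $G_m^j$ which eventually exceed $N$, while the streets of superlevels $<m$ that have appeared near the origin delimit a block whose boundary consists of streets of the levels that \emph{are} present. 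More precisely, I would argue that the box $\Btwn\big(-(N,N),(N,N)\big)$ is, for $\mathbf X$ in a full-measure set, contained in the block $D$ whose boundary is made up of the innermost streets surrounding the origin on all four sides — these streets exist and lie at distance $>N$ eventually because of the lower bounds $G_m^j\ge \plannedwidth_m$ (and their reflected versions) growing to infinity.

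Concretely, the steps are: (1) reduce to a symmetric box $B'=\{-N,\dots,N\}^2\supseteq B$; (2) recall that a block w.r.t.\ $\IG^{\mathbf X}$ is automatically a block w.r.t.\ $\SG(\mathbf X)$, since the asphalting only modifies the interiors of fields and never destroys a surrounding higher-level street, so it suffices to find a block $D\supseteq B'$ w.r.t.\ $\IG^{\mathbf X}$; (3) using the description of $\IG$ on each quadrant from the proof of \thref{InitgridStreetgrid}, identify, for each of the four diagonal directions, the innermost street w.r.t.\ $\IG^{\mathbf X}$ crossing that direction, and check that the box $D$ they enclose is indeed a block w.r.t.\ $\IG^{\mathbf X}$ (all boundary points belong to exactly four streets of level $>\levelof^{\IG}(D)$ — this is essentially the streetgrid property, already established); (4) show $D\supseteq B'$ eventually, i.e.\ that the ``radius'' of $D$ exceeds $N$ $P$-a.s., by invoking that these innermost surrounding streets sit at distances bounded below by quantities such as $G_m^j\ge\plannedwidth_m\to\infty$ as $m\to\infty$ (via \thref{threeevents}/\thref{corev} applied on all four quadrants, using the obvious reflected statements for the other three); (5) conclude by a union over the countably many boxes $B'=\{-N,\dots,N\}^2$, $N\in\N$, that for each $B'$ the event has probability one.

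The main obstacle I anticipate is step (3)–(4): pinning down exactly which streets form the boundary of the desired block $D$ and verifying the block property for $D$ rigorously, rather than just heuristically from the picture. One has to be careful that the four surrounding streets really do meet up to enclose a rectangle (rather than, e.g., a street of an intermediate level poking into $B'$), and that the boundary $\partial D$ consists of exactly four streets each of which has level strictly above $\levelof^{\IG}(D)=\max_{u\in D}\IG(u)$. The cleanest route is probably to take $D$ to be the street or field $\Srnd^{\IG}(0)$ around the origin when that already contains $B'$, and otherwise to climb up: if $\Srnd^{\IG}(0)$ is too small, enlarge to the smallest block around $0$ that contains $B'$, whose existence follows because as one increases the level threshold the enclosing rectangle grows without bound (its radius is at least the distance to the nearest street of a sufficiently high level, which tends to infinity). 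Once $D$ is fixed as a block w.r.t.\ $\IG$, the passage to $\SG$ in step (2) is immediate from the construction's design, and the union bound in step (5) is routine.
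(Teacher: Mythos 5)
Your overall strategy is the right one and matches the paper's: find a block $D$ w.r.t.\ $\IG^{\mathbf X}$ containing $B$, then observe that the asphalting only modifies field interiors, so $D$ stays a block w.r.t.\ $\SG(\mathbf X)$. The gap is exactly the one you flag yourself in steps (3)--(4): you never produce a concrete candidate $D$ or check the block property, and the phrase ``the enclosing rectangle grows without bound'' is doing all the work without proof. Moreover ``take the smallest block around $0$ containing $B'$'' presupposes that some block containing $B'$ exists, which is the thing to be established.

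The tools you reach for are also off-target. The quantities $G_m^j$, $M(\mathbf X)$ and \thref{threeevents}, \thref{corev} live in Subsection~\ref{large} and are built for a one-sided, eventual, quantitative statement about the \emph{positive} $j$-axes; they do not naturally describe ``the nearest street of level $>l$ on each of the four sides of the origin.'' The right object is $V^j$, defined on all of $\Z$ and carrying precisely the ``weakly monotonically increasing seen from $0$'' structure of \thref{WMI}. With $l:=\levelof^{\SG}(B)$, the paper sets
\[
d_j:=\max\{x\le b_j(B)\mid V_{x-1}^j>l\},\qquad d_j':=\min\{x\ge b_j'(B)\mid V_{x+1}^j>l\},
\]
and $D:=\{d_0,\dots,d_0'\}\times\{d_1,\dots,d_1'\}$. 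These are $P$-a.s.\ finite because $V^j$ eventually exceeds any fixed level in both directions; one checks $B\subseteq D$ and $\levelof^{\IG}(D)\le l$, and the monotonicity of $V^j$ away from the origin forces $\partial D$ to lie in exactly four streets w.r.t.\ $\IG^{\mathbf X}$, each of level $>l$. That makes $D$ a block w.r.t.\ $\IG^{\mathbf X}$, hence w.r.t.\ $\SG(\mathbf X)$, with no reduction to symmetric boxes or union over $N$ required. This explicit definition is the missing piece your sketch needs.
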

\begin{proof}
 Take $B$ a box containing the origin. For $j\in\{0,1\}$, define the random variables
 \begin{equation}
  d_j:=\max\{x\le b_j(B)|V_{x-1}^j>\levelof^{\SG}(B)\}\text{ and }d_j':=\min\{x\ge b_j'(B)|V_{x+1}^j>\levelof^{\SG}(B)\}.
 \end{equation}
 These are $P$--almost surely finite, and $B\subseteq D:=\{d_0,\dots,d_0'\}\times\{d_1,\dots,d_1'\}$. $D$ is a random set and a block w.r.t.\ $\IG(\cdot)$. The streets placed on $D$ by the iterative construction leading to $\SG$ are all of lower level than the minimum of the levels present in $\partial D$, so that the block-property is preserved.
\end{proof}
\begin{dfn}\thlabel{IJ}
 Let $D$ be a block w.r.t.\ $g\in\Nnull^{\closure D}$ such that $0\in D$. Note that this is more a condition on $g$ than on $D$. Define 
 \begin{gather}
  J_g:=\Big\{(y,2m+j,0)\big|\ m>\RL{\loben^g(D)},\ j\in\{0,1\},\ b_j(D)-1\le y\le b_j'(D)+\beta_m\Big\}\subseteq\calZ
 \shortintertext{and }
  I_g:=\Big\{(y,2m+j,u)\big|\ j\in\{0,1\},\ 0\le m\le\RL{\loben^g(D)},\ b_j(D)-1\le y\le b_j'(D)+\beta_m,\ u\in D\Big\}\subseteq\calZ.
 \end{gather} 
\end{dfn}
The dependence of $I_g$ and $J_g$ on $D$ is omitted because it can be considered implicit via $g$.

To explain the meaning of these two sets, we need to go into greater detail. 

Take a realization of $\mathbf X$. It is an element of $\{0,1\}^\calZ$, and leads to $\SG=\SG(\mathbf X)$. One can ask at which points in $\calZ$ the values of $\mathbf X$ may be changed without changing the outcome of $\SG$, or $\SG|_D$ for some fixed $D\subseteq\Z$. 

The other way around, given a certain realization $g\in\Nnull^{\closure D}$ of $\SG(\mathbf X)|_{\closure D}$, where $D\subseteq\Z^2$ is a box, one can ask about the set of realizations of $\mathbf X$ such that 
\begin{equation}
\SG(\mathbf X)|_{\closure D}=g.\label{SG=g}                                                                                                                                                                                                        \end{equation}
It turns out it is enough to look at the outcome of $\mathbf X$ on the two subsets $I_g$ and $J_g$ of $\calZ$ in order to decide whether the last equation is true or not. All points that are responsible in the sense of \thref{responsibility} are contained in $I_g$, and $\mathbf X(\cdot)$ being equal to $0$ at all points in $J_g$ stipulates the absence of big streets that are not supposed to be on $\closure D$.

\begin{lem}\thlabel{wecantamper}
 Under the hypotheses of \thref{IJ}, $I_g$ is finite, and $I_g\cap J_g=\emptyset$. Let $\configX\in\{0,1\}^\calZ$. If $\SG(\configX)|_{\closure D}=g$, then it holds that $\SG(\configX|_{I_g\cup J_g})|_{\closure D}=g$, in the notation of \eqref{F(x)|_D=g}. In other words, $\SG(\configX)|_{\closure D}=g$ does not depend on $\configX|_{\calZ\setminus(I_g\cup J_g)}$. Also, $\SG(\configX)|_{\closure D}=g$ implies $\configX|_{J_g}=0|_{J_g}$. Finally, $P(\mathbf X|_{J_g}\equiv0)>0$.
\end{lem}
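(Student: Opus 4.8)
Looking at Lemma \ref{wecantamper}, I need to prove several claims. Let me think about each.

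\textbf{Plan.} The statement bundles together four assertions: (i) $I_g$ is finite and disjoint from $J_g$; (ii) if $\SG(\configX)|_{\closure D}=g$ then $\SG(\configX|_{I_g\cup J_g})|_{\closure D}=g$, i.e.\ the event only depends on the restriction of $\configX$ to $I_g\cup J_g$; (iii) $\SG(\configX)|_{\closure D}=g$ forces $\configX|_{J_g}\equiv 0$; and (iv) $P(\mathbf X|_{J_g}\equiv 0)>0$. I would treat them in roughly this order, since (iv) is almost immediate once (i) is in hand.

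\textbf{Finiteness and disjointness.} $I_g$ is a finite set because $D$ is a bounded box (it has finitely many points $u$), $\loben^g(D)$ is a fixed finite number so there are only finitely many admissible superlevels $0\le m\le\RL{\loben^g(D)}$, and for each such $m$ the range $b_j(D)-1\le y\le b_j'(D)+\beta_m$ of $y$ is finite. Disjointness $I_g\cap J_g=\emptyset$ is purely combinatorial: every triple in $I_g$ has superlevel $m\le\RL{\loben^g(D)}$ while every triple in $J_g$ has $m>\RL{\loben^g(D)}$, so the level-coordinates never match.

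\textbf{Dependence only on $I_g\cup J_g$, and the forced zeros.} This is the heart of the matter and the step I expect to be the main obstacle. The idea is to trace through the construction of $\SG$ from $\IG$ and check that the value of $\SG$ on $\closure D$ is determined by $\configX$ restricted to $I_g\cup J_g$. I would argue as follows. First, streets of superlevel $m>\RL{\loben^g(D)}$ cannot intersect $\closure D$: if one did, it would contribute a level exceeding $\loben^g(D)=\max_{u\in\partial D}g_u$ somewhere on $\closure D$ — and since $D$ is a block all of $\partial D$ lies in streets of level $>\levelof^g(D)$ with $\loben^g(D)$ their maximum, so a street of strictly higher superlevel touching $\closure D$ contradicts $g$ being the actual restriction. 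The planned width $\beta_m$ of a superlevel-$m$ street means that whether such a street touches $\closure D$ is governed by $\mathbf X(y,2m+j,0)$ for $y$ in the window $b_j(D)-1\le y\le b_j'(D)+\beta_m$ — exactly the index set $J_g$ (here one uses the ``weakly monotone seen from $0$'' structure of $V$, together with \eqref{funnelLu}, to see that distant Bernoulli marks get trimmed away by $V$ and cannot reach $\closure D$). Hence $\SG|_{\closure D}=g$ forbids any $1$ in $J_g$, giving (iii); and conversely the \emph{absence} of these higher-level streets near $\closure D$ depends on $\configX$ only through $J_g$. For the levels $\le\loben^g(D)$ actually present on $\closure D$, the value of $\SG$ at each point of $D$ is produced by the iterative asphalting, and by the responsibility discussion (\thref{responsibility}, \thref{responsibleforoneblock}) the randomness used to place a street of level $l$ inside any sub-block of $D$ comes from $\mathbf X(\cdot,l,w)$ with $w$ either $0$ or an upper-right corner $\UR(\text{subfield})\in D$, and the relevant spatial coordinate lies within $\beta_{\RL l}$ of the subfield, hence within $[b_j(D)-1,b_j'(D)+\beta_m]$. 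All such triples lie in $I_g$. Therefore changing $\configX$ off $I_g\cup J_g$ changes neither the forbidden high streets near $\closure D$ nor the placement of the streets realized inside $D$, proving (ii).

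\textbf{Positivity.} Finally $P(\mathbf X|_{J_g}\equiv 0)>0$: by (i), $J_g$ — wait, $J_g$ is infinite, so I must be more careful. Each coordinate $\mathbf X(x,l,w)$ is Bernoulli with parameter $\lambda_{\RL l}<1$, and the variables are independent. For fixed $m>\RL{\loben^g(D)}$ and fixed $j$, the relevant $y$-window has length $b_j'(D)-b_j(D)+\beta_m+2$, so
\begin{equation}
 P\big(\mathbf X(y,2m+j,0)=0\text{ for all such }y\big)=(1-\lambda_m)^{b_j'(D)-b_j(D)+\beta_m+2}=(1-\lambda_m)^{\beta_m}\cdot(1-\lambda_m)^{b_j'(D)-b_j(D)+2}.
\end{equation}
Since $\lambda_m\beta_m=\frac{1}{(m+1)^2}$, we have $(1-\lambda_m)^{\beta_m}\ge e^{-c/(m+1)^2}$ for a suitable constant, and the extra factor is $\ge(1-\lambda_m)^{\mathrm{const}}$; multiplying over the finitely many $j\in\{0,1\}$ and all $m>\RL{\loben^g(D)}$, the resulting infinite product converges to a strictly positive number because $\sum_m 1/(m+1)^2<\infty$ and $\sum_m\lambda_m<\infty$. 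By independence this product equals $P(\mathbf X|_{J_g}\equiv 0)$, which is therefore positive. The main obstacle, as noted, is step (ii)/(iii): making rigorous the claim that the trimming operator $V$ and the iterative construction confine all the ``relevant'' randomness for $\closure D$ to the window $[b_j(D)-1,\ b_j'(D)+\beta_m]$ in each level; this requires carefully unwinding \eqref{W}, \eqref{funnelW}, \eqref{funnelLu} and the asphalting recursion, but is conceptually exactly the content the authors built the notion of responsibility for.
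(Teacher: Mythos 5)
Your proposal is correct and takes essentially the same route as the paper's own proof: decide finiteness and disjointness of $I_g,J_g$ from the level thresholds, trace the construction of $\SG$ from $\IG$ through the asphalting iteration (using the responsibility mechanism and the trimming by $V$) to confine the relevant randomness to $I_g\cup J_g$, observe that $\SG|_{\closure D}=g$ forces the absence of any street of superlevel $>\RL{\loben^g(D)}$ touching $\closure D$ which is equivalent to $\configX|_{J_g}\equiv 0$, and compute the infinite product to get positivity. The one presentational difference is that the paper organizes the "no influence" step as an explicit case analysis over four categories of triples $(y,2m+j,u)$ (outside $D$; inside $D\setminus\{0\}$ at too high a level; inside $D$ at low level but spatial coordinate outside the window; at the origin at high level outside a slightly enlarged window) rather than the summary you give, but the content is the same and your version is no less rigorous than the original.
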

\begin{proof}[Proof of \thref{wecantamper}]
 The first two assertions are obvious. $\SG(\configX)|_{\closure D}=g$ does hold or not no matter what the values of $\configX$ at the points $(y,2m+j,u)$ with
 \begin{itemize}
  \item $u\in \Z^2\setminus D$, $m\in\N$, $y\in\Z$, $j\in\{0,1\}$,
  \item $u\in D\setminus\{0\}$, $m\ge\RL{\lunten^g(D)}$, $y\in\Z$, $j\in\{0,1\}$,
  \item $u\in D$, $m<\RL{\lunten^g(D)}$, $y\le b_j(D)-1$ or $y\ge b_j'(D)+\beta_m$, $j\in\{0,1\}$,
  \item $u=0$, $m\ge\RL{\lunten^g(D)}$, $y\le b_j(D)-2$ or $y\ge b_j'(D)+\beta_m+1$, $j\in\{0,1\}$.
 \end{itemize}
 Let us look at the lines one at a time. 

 As $D$ is a block w.r.t.\ $g$, and $0\in D$, all four streets in $\partial D$ are already present in $\IG^{\mathbf x}$. $\IG^{\mathbf x}$ is only influenced by the values of $\mathbf x$ at points $(\cdot,\cdot,0)$. The streets w.r.t.\ $g$ in $D$ are either streets w.r.t.\ $\IG^{\mathbf x}$ or are influenced by the values of $\mathbf x$ at the upper right corners of fields w.r.t.\ $\IG^{\mathbf x}$ or the subsequent iteration steps in the construction. These fields are entirely contained in $D$, again because $D$ is a block w.r.t.\ $g$. This is why points $(\cdot,\cdot,u)$ with $u\not\in D$ have no influence.

 We just looked at the influence of points in the upper right corners of fields lying entirely in $D$. The streets they induce are all of lower level than the minimum level present in $\partial D$; higher levels are not even considered, and thus the values of $\configX$ at the points in the second line have no influence on the equation. 

 The values of points with lower level do have an influence, but only if the index of the Bernoulli--process is not too far from $D$; to be precice, neither left to the lower end in the $j$--th coordinate--direction of $D$, nor farther than one street--width to the right of the upper end of $D$.

 Similarily, the values at the origin do not have any influence if the index of the Bernoulli--process is too far from $\closure D$; this translates as slightly loosened boundaries in the last line. 

 All remaining points are contained in $I_g$ and $J_g$, which contain however some of the cases above as well. This proves that $\SG(\configX)|_{\closure D}=g$ does not depend on $\configX|_{\calZ\setminus(I_g\cup J_g)}$.

 The superlevels of the streets in $\closure D$ are \emph{per definitionem} bounded by $\RL{\loben^g(D)}$. If the equation $\SG(\configX)|_{\closure D}=g$ is to hold, it is trivially true that 
 \begin{equation}
  \text{there is no street w.r.t.\ $\SG(\configX)$ of higher superlevel than }\RL{\loben^g(D)}\text{ in }\closure D.\label{nohigherlevel}
 \end{equation}

 This condition \eqref{nohigherlevel} is equivalent to 
 \begin{equation}
  \configX\big(y,2m+j,0\big)=0 \text{ for all }b_j(D)-1\le y\le b_j'(D)+\beta_m,m>\RL{\loben^g(D)},j\in\{0,1\}.\label{Xzero}
 \end{equation}
 \eqref{Xzero} can be written as $\configX|_{J_g}\equiv0$, which can hence be seen as an equivalent to \eqref{nohigherlevel}.

 Finally,  we have, with some non-trivial, non-random constant $c$,
 \begin{equation}
  P(\mathbf X|_{J_g}\equiv0)=\smashoperator[l]{\prod_{m>\RL{\loben^g(B)}}}\smashoperator[r]{\prod_{j\in\{0,1\}}}(1-\lambda_m)^{b_j'(D)-b_j(D)+\beta_m+2}\ge c\smashoperator[l]{\prod_{m\ge1}}\smashoperator[r]{\prod_{j\in\{0,1\}}}(1-\lambda_m)^{\beta_m}=c\smashoperator{\prod_{m\ge1}}(1-\lambda_m)^{2\beta_m}.
 \end{equation}

 This value to be larger than zero is equivalent to 
 \[\sum_{m\ge1}\beta_m\ln(1-\lambda_m)>-\infty.\]

 But
 \[\beta_m\ln(1-\lambda_m)\sim \beta_m(-\lambda_m)=-\frac{m!^2}{(m+1)!^2}=-\frac1{(m+1)^2},\]
 and we can, by the finiteness of the sum, confirm positive $P_{\mathbf X|_{J_g}}$-measure for $0|_{J_g}$.
\end{proof}
\begin{dfn} We need to define some shift operators and related notations. Let $v\in\Z^2$ be the vector we want to shift by.

 For $D\subseteq\Z^2$, we write $D+v:=\{u+v\ |\ u\in D\}.$

 For $D\subseteq \Z^2$, $f\in\Nnull^D$, we define the shifted $\theta_vf\in\Nnull^{D+v}$ by 
 \begin{equation}
  (\theta_vf)_u:=f_{u-v}\text{ for all }u\in D+v.
 \end{equation}

 We also can shift elements $(x,l,u)\in\calZ$ by
 \begin{equation}
  \theta_v(x,l,u):=(x+v_{\SL{l}},l,u+v).
 \end{equation}

 A slightly different shift will sometimes be needed for elements of the form $(x,l,0)\in\calZ$, namely one that preserves the special role of the origin:
 \begin{equation}
  \vartheta_v(x,l,0):=(x+v_{\SL{l}},l,0).
 \end{equation}

 With these last two definitions at hand, we can shift the two $I_g$ and $J_g$ from \thref{IJ} in the standard way by
 \begin{align}
  \theta_vI_g&:=\{\theta_v(x,l,u)\ |\ (x,l,u)\in I_g\},\\
  \vartheta_vJ_g&:=\{\vartheta_v(x,l,0)\ |\ (x,l,0)\in J_g\}.
 \end{align}

 Finally, we shift whole configurations $\configx\in\{0,1\}^I$, $I\subseteq\calZ$ by defining
 \begin{equation}
  \theta_v\configx\big((x,l,u)\big):=\configx\big(\theta_{-v}(x,l,u)\big),\ (x,l,u)\in\theta_vI.
 \end{equation}
\end{dfn}
\begin{lem}\thlabel{shiftedcantamper}
 Let $D\ni0$ be a block w.r.t.\ $g\in\Nnull^{\closure D}$, $I_g$, $J_g$ from \thref{IJ}. Also take any $v$ such that $-v\in D$. Then, $I_{\theta_vg}=\theta_vI_g$ and $J_{\theta_vg}=\vartheta_vJ_g$, and for any $\configX\in\{0,1\}^\calZ$, $\SG(\configX)|_{\closure D+v}=\theta_vg$ implies $\SG(\configX|_{\theta_vI_g\cup\vartheta_vJ_g})|_{\closure D+v}=\theta_vg$.
\end{lem}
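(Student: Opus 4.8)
The statement is essentially a bookkeeping fact: \thref{wecantamper} already establishes everything we need at the origin, and \thref{shiftedcantamper} just transports that fact by the shift $\theta_v$. So the plan is to reduce the shifted claim to the unshifted one by a change of variables, once we have checked that the shift operators act on $I_g$ and $J_g$ exactly as the formulas $I_{\theta_vg}=\theta_vI_g$, $J_{\theta_vg}=\vartheta_vJ_g$ assert. First I would record the elementary but crucial observation that $\SG$ is \emph{not} translation invariant as a function of $\configX$ — the origin plays a distinguished role through \eqref{funnelLu} and the choice $w=0$ in \thref{responsibility} — so one cannot naively write $\SG(\theta_v\configX)=\theta_v\SG(\configX)$. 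However, the hypothesis $-v\in D$ means that the origin and the shifted-back point $-v$ lie in the \emph{same} block $D$, and since $D$ remains a block throughout the construction (as in \thref{blockaround} and the discussion following \thref{responsibility}), the point responsible for each level $\levelof^g(D)\le l<\lunten^g(D)$ in $D$ is the same whether we regard $0$ or $-v$ as "the origin": it is $0$ in one normalization and $-v$ in the shifted picture, which is precisely why $J_g$ uses the origin-preserving shift $\vartheta_v$ (the Bernoulli processes at $(\cdot,\cdot,0)$ governing the high levels absent from $\closure D$ are tied to the true origin, which after shifting the \emph{region} by $v$ still controls the same absence-of-streets event, now on $\closure D+v$), while $I_g$ uses the ordinary shift $\theta_v$ (the responsible points inside $D$ move rigidly with the block).

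\textbf{Step 1: the identities for $I$ and $J$.} Writing out \thref{IJ} for the block $D+v$ with field-function $\theta_vg$: its face emplacements satisfy $b_j(D+v)=b_j(D)+v_j$ and $b_j'(D+v)=b_j'(D)+v_j$, and $\loben^{\theta_vg}(D+v)=\loben^g(D)$, $\lunten^{\theta_vg}(D+v)=\lunten^g(D)$ since $\theta_vg$ just relabels the domain. Plugging these in, the index $y$ ranges over $b_j(D)+v_j-1\le y\le b_j'(D)+v_j+\beta_m$, i.e.\ over $y'+v_j$ with $y'$ in the original range; since $\theta_v$ sends $(x,l,u)\mapsto(x+v_{\SL{l}},l,u+v)$ and $\vartheta_v$ sends $(x,l,0)\mapsto(x+v_{\SL l},l,0)$, and $u\in D\iff u+v\in D+v$, a direct comparison of the two definitions gives $I_{\theta_vg}=\theta_vI_g$ and $J_{\theta_vg}=\vartheta_vJ_g$. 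This is the only genuine computation and it is routine.

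\textbf{Step 2: the shifted tampering statement.} Suppose $\configX\in\{0,1\}^\calZ$ with $\SG(\configX)|_{\closure D+v}=\theta_vg$. I want to show this equation is insensitive to $\configX$ off $\theta_vI_g\cup\vartheta_vJ_g$. The argument is the verbatim analogue of the proof of \thref{wecantamper}: one checks, line by line, that $\SG(\configX)|_{\closure D+v}=\theta_vg$ holds or fails independently of the values of $\configX$ at $(y,2m+j,u)$ with (i) $u\notin D+v$; (ii) $u\in(D+v)\setminus\{0\}$ and $m\ge\RL{\lunten^{\theta_vg}(D+v)}$; (iii) $u\in D+v$, $m<\RL{\lunten^{\theta_vg}(D+v)}$, and $y$ outside $[b_j(D+v)-1,\,b_j'(D+v)+\beta_m]$; (iv) $u=0$, $m\ge\RL{\lunten^{\theta_vg}(D+v)}$, and $y$ outside $[b_j(D+v)-2,\,b_j'(D+v)+\beta_m+1]$. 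The justification is exactly as before, using that $D+v$ is a block w.r.t.\ $\theta_vg$ with $0\in D+v$ (because $-v\in D$), that $\IG^{\configX}$ depends only on $\configX(\cdot,\cdot,0)$, that the fields of $\IG^{\configX}$ refining $D+v$ lie entirely in $D+v$, and that streets placed in those fields have level below $\lunten^{\theta_vg}(D+v)$. The complement of the listed point-sets is contained in $I_{\theta_vg}\cup J_{\theta_vg}$, which by Step 1 equals $\theta_vI_g\cup\vartheta_vJ_g$; this is the conclusion. Alternatively, and more economically, one can observe that since $-v\in D$ the block $D$ already contains the origin, so $D$ is itself a block w.r.t.\ $g'$ for a suitable value and one can invoke \thref{wecantamper} applied to $D+v$ directly with the distinguished point of the construction; I would present the line-by-line version to keep the exposition self-contained.

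\textbf{Main obstacle.} Nothing here is deep; the only place where one must be careful is the interplay between the two different shifts $\theta_v$ and $\vartheta_v$ — getting confused about which of $I_g$, $J_g$ transforms by which operator. The conceptual point pinning this down is that the randomness in $J_g$ (absence of high-level streets over $\closure D$) is provided by processes indexed at the genuine origin $(\cdot,\cdot,0)$, whereas the randomness in $I_g$ (placement of streets inside $D$) is provided by processes indexed at the moving points $0$ and the upper-right corners of fields, both of which are carried rigidly by $\theta_v$; so the formulas in the statement are forced, and once Step 1 is verified the rest is a transcription of the proof of \thref{wecantamper}.
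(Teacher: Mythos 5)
Your proposal is correct and follows essentially the same route as the paper: Step~1 carries out the "easy exercise" the paper alludes to, and Step~2 amounts to the same reduction. The paper's version is more economical — it simply applies \thref{wecantamper} to the block $D+v$ (which contains $0$ because $-v\in D$) with field-function $\theta_vg$, and then substitutes the set identities from Step~1, rather than re-deriving the line-by-line insensitivity analysis; you note this shortcut as an alternative, but present the longer version.
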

\begin{proof}
 The first two equalities are easy exercises; an important point is how $\vartheta_\cdot$ preserves the special role of the origin, but at a different position relative to the shifted box. 

 The second assertion then follows directly frome \thref{wecantamper}, which tells us that $\SG(\configX)|_{\closure D+v}=\theta_vg$ implies $\SG(\configX|_{I_{\theta_vg}\cup J_{\theta_vg}})|_{\closure D+v}=\theta_vg$.
\end{proof}
Figure \ref{Responsibility} gives an idea of how the responsibility changes when the point of reference (the origin) is changed. This sort of changing will be employed in \thref{bijmap} in order to create a configuration of $\{0,1\}^{I_g}$ that yields the same outcome of the final streetgrid's construction, only shifted. 
\begin{figure}[htb]
 \begin{center}
  \begin{tikzpicture}
 \filldraw[black!45!white](-2,-2)rectangle(-1.8,2);
  \filldraw[black!45!white](1.5,-2)rectangle(1.7,2);
  \filldraw[black!30!white](-3,1)rectangle(-2,1.1);
  \filldraw[black!30!white](-1.8,0.5)rectangle(1.5,0.6);
  \filldraw[black!30!white](-1.8,-1.5)rectangle(1.5,-1.4);
  \filldraw[black!30!white](1.7,1.5)rectangle(3,1.6);
  \filldraw[black!10!white](-3,-2)rectangle(-2,1);
  \filldraw[black!10!white](-3,2)rectangle(-2,1.1);
  \filldraw[black!10!white](-1.8,2)rectangle(1.5,0.6);
  \filldraw[black!10!white](-1.8,0.5)rectangle(1.5,-1.4);
  \filldraw[black!10!white](-1.8,-1.5)rectangle(1.5,-2);
  \filldraw[black!10!white](1.7,1.6)rectangle(3,2);
  \filldraw[black!10!white](1.7,1.5)rectangle(3,-2);
  \draw(-3,-2)rectangle(3,2);
  \draw[very thin,<->](-3.2,-1.3)--(3.2,-1.3);
  \draw[very thin,<->](2,-2.2)--(2,2.2);
  \draw(2,-1.3)node[anchor=north west]{$0$}; 
  \draw[thin](-0.05,0)--(0.05,0)(0,-0.05)--(0,0.05)(0,0)node[anchor=north west]{$-v$};
  \draw[->](-2,2)--+(210:0.5);
  \draw[->](-2,2)--(-2.3,1.1);
  \draw[->](-2,1)--+(210:0.5);
  \draw[->](1.5,2)--+(210:0.5);
  \draw[->](1.5,0.5)--+(210:0.5);
  \draw[->](1.5,-1.5)--+(210:0.5);
  \draw[->](3,2)--+(195:1);
  \draw[->](2,-1.3)--(2.5,1.5);
  \draw[->](2,-1.3)--+(30:0.5);
  \draw[->](2,-1.3)--(1.7,-0.7);
 \draw[->](1.5,2)--(1,0.6);
 \begin{scope}[xshift=6.8cm]
  \filldraw[black!45!white](-2,-2)rectangle(-1.8,2);
 \filldraw[black!45!white](1.5,-2)rectangle(1.7,2);
 \filldraw[black!30!white](-3,1)rectangle(-2,1.1);
 \filldraw[black!30!white](-1.8,0.5)rectangle(1.5,0.6);
 \filldraw[black!30!white](-1.8,-1.5)rectangle(1.5,-1.4);
 \filldraw[black!30!white](1.7,1.5)rectangle(3,1.6);
 \filldraw[black!10!white](-3,-2)rectangle(-2,1);
 \filldraw[black!10!white](-3,2)rectangle(-2,1.1);
 \filldraw[black!10!white](-1.8,2)rectangle(1.5,0.6);
 \filldraw[black!10!white](-1.8,0.5)rectangle(1.5,-1.4);
 \filldraw[black!10!white](-1.8,-1.5)rectangle(1.5,-2);
 \filldraw[black!10!white](1.7,1.6)rectangle(3,2);
 \filldraw[black!10!white](1.7,1.5)rectangle(3,-2);
 \draw(-3,-2)rectangle(3,2);
 \draw[very thin,<->](-3.2,0)--(3.2,0);
 \draw[very thin,<->](0,-2.2)--(0,2.2);
 \draw(0,0)node[anchor=south east]{$0$};
 \draw[thin](1.95,-1.3)--(2.05,-1.3)(2,-1.25)--(2,-1.35)(2,-1.3)node[anchor=north west]{$v$};
 \draw[->](-2,2)--+(210:0.5);
 \draw[->](-2,2)--(-2.3,1.1);
 \draw[->](-2,1)--+(210:0.5);
 \draw[->](1.5,2)--+(210:0.5);
 \draw[->](1.5,-1.5)--+(210:0.5);
 \draw[->](3,2)--+(195:1);
 \draw[->](3,2)--(2.5,1.6);
 \draw[->](3,1.5)--+(210:0.5);
 \draw[->](0,0)--+(250:0.5);
 \draw[->](0,0)--(0.5,0.5);
 \draw[->](0,0)--+(1.5,-0.8);
\end{scope}
\end{tikzpicture}
 \end{center}
\caption{Responsibility. If the base of some arrow is at $w$ and the tip points to some street of level $l$, then $w$ is responsible for the emplacement of the streets of level $l$ in $D$, where $D$ is the smallest block containing the street.}\label{Responsibility}
\end{figure}
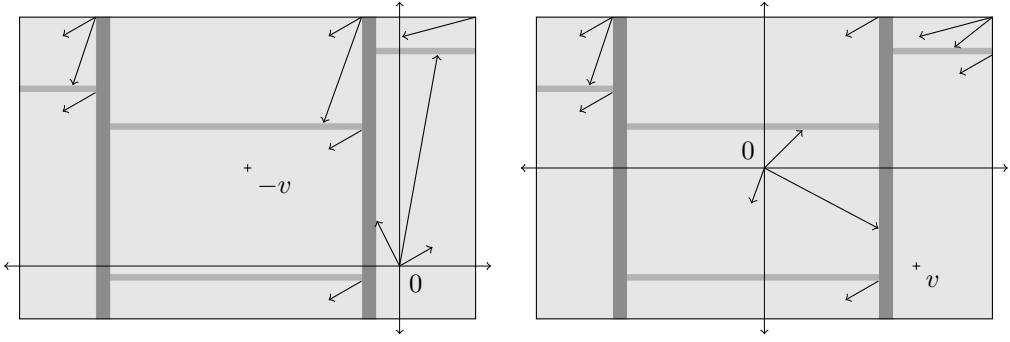
\begin{dfn}\thlabel{bijmap}
 Take the hypotheses of \thref{shiftedcantamper}. We define yet another operator on configurations on $\{0,1\}^{I_g}$, 
 \begin{equation}
 \configx\mapsto\ \Bijmapp\configx:
 \ \big\{\configy\in\{0,1\}^{I_g}\big|\SG(\configy,0|_{J_g})|_{\closure D}=g\big\}
 \rightarrow\big\{\configy\in\{0,1\}^{\theta_vI_g}\big|\SG(\configy,0|_{\vartheta_vJ_g})|_{\closure D+v}=\theta_vg\big\}.\label{bijmapequ}
 \end{equation}
 So, we need to define the object $\big(\Bijmapp\configx\big)(\cdot)$ for all $(y,l,w)\in I_{\theta_vg}$. We do this first for a special case of pairs $(l,w)$, and then for the rest. 

 Take any block $B$ w.r.t.\ $g$, and $\levelof^g(B)\le l<\lunten^g(B)$. Recall that $B+v$ is a block w.r.t.\ $\theta_vg$, and that $\levelof^g(B)=\levelof^{\theta_vg}(B+v)$ and $\lunten^g(B)=\lunten^{\theta_vg}(B+v)$. So, we can apply \thref{responsibility} and obtain $w\in D$ and $\widetilde w\in D+v$ such that
\begin{itemize}
 \item $\widetilde w$ responsible in $\theta_vg$ for the emplacement of streets of level $l$ in $B+v$, and
 \item $w$ responsible in $g$ for the emplacement of the streets of level $l$ in $B$,  
\end{itemize}
which are both the only points to satisfy these conditions. 

Write $m:=\RL{l}$ and $j:=\SL{l}$, and define, for $b_j(D+v)-1\le y\le b_j'(D+v)+\plannedwidth_m$,
\begin{equation}
 \big(\Bijmapp\configx\big)\big((y,l,\widetilde w)\big):=\configx\big((y-v_j,l,w)\big)\text{, and } \big(\Bijmapp\configx\big)\big((y,l,w+v)\big):=\configx\big((y-v_j,l,\widetilde w-v)\big).\label{firstdfn}
\end{equation}
For any other case that has not yet been covered, take $l<\lunten^g(D)$ and $\widetilde w\in D+v$ such that
 \begin{itemize}
  \item $\widetilde w$ is \emph{not} in $\theta_vg$ responsible for the emplacement of the streets of level $l$ in $\widetilde B$ for any block $\widetilde B$ w.r.t.\ $\theta_vg$, and 
  \item $\widetilde w-v$ is \emph{not} in $g$ responsible for the emplacement of the streets of level $l$ in $B$ for any block $B$ w.r.t.\ $g$;
 \end{itemize}
 then, with $m:=\RL{l}$, $j:=\SL{l}$, we define, for $b_j(D+v)-1\le y\le b_j'(D+v)+\beta_m$,
 \begin{equation}
  \big(\Bijmapp\configx\big)\big((y,l,\widetilde w)\big):=\configx\big((y-v_j,l,\widetilde w-v)\big).
 \end{equation}
\end{dfn}
This last definition shows that the operator $\Bijmapp$ is for most of the points really just the shift operator applied to the function $\configx$; only at the few points that are responsible, and at their counterparts in the shifted set, the special definition takes effect, and so to say, the responsibility is switched.

Note that $\Bijmapp$ depends strongly on $v$ and $g$, which gives again an implicit dependence on $D$. It also depends on our choice of $I_g$ and $J_g$. 
\begin{lem}\thlabel{bijmapfine}
 $\Bijmapp$ is well-defined, and takes indeed values in the specified codomain. It is bijective, and probability--preserving in the sense that  
 \begin{equation}
  P(\mathbf X|_{I_g}=\configx)=P(\mathbf X|_{\theta_vI_g}=\Bijmapp\configx)\text{ for all }\configx\in\{0,1\}^{I_g}. 
 \end{equation}
Also, the following equivalence holds: 
 \begin{equation}
  \SG(\configx,0|_{J_g})|_{\closure D}=g\Longleftrightarrow\SG(\Bijmapp\configx,0|_{\vartheta_vJ_g})|_{\closure D+v}=\theta_vg.
 \end{equation}
\end{lem}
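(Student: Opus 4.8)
The plan is to establish the four assertions of \thref{bijmapfine} — well-definedness, well-definedness of the codomain, bijectivity, probability-preservation, and the stated equivalence — essentially in that order, leaning on the bookkeeping already set up in \thref{responsibility}, \thref{responsibleforoneblock}, \thref{wecantamper} and \thref{shiftedcantamper}.

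First I would check that $\Bijmapp$ is well-defined, i.e.\ that every element $(y,l,\widetilde w)\in I_{\theta_vg}=\theta_vI_g$ is covered by exactly one clause of \thref{bijmap}. The key is \thref{responsibleforoneblock}: for a given level $l<\lunten^{\theta_vg}(D+v)$, each point $\widetilde w\in D+v$ is responsible for streets of level $l$ in \emph{at most} one block w.r.t.\ $\theta_vg$. So the set of pairs $(l,\widetilde w)$ splits cleanly into the ``responsible'' case (first clause of \eqref{firstdfn}), its mirror image at $w+v$ (second clause of \eqref{firstdfn}), and the generic ``not responsible on either side'' case. I would also have to note that $w$, being responsible in $g$ for level $l$ in $B$, and $\widetilde w$, being responsible in $\theta_vg$ for level $l$ in $B+v$, lie in $D$ and $D+v$ respectively, and that the shift identities $\levelof^g(B)=\levelof^{\theta_vg}(B+v)$, $\lunten^g(B)=\lunten^{\theta_vg}(B+v)$ make the range $b_j(D+v)-1\le y\le b_j'(D+v)+\beta_m$ match up with the corresponding range on the $g$-side under $y\mapsto y-v_j$; hence the right-hand sides of \eqref{firstdfn} are evaluated at genuine elements of $I_g$. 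There is one subtlety worth a sentence: one must confirm that the points $w+v$ appearing in the second clause of \eqref{firstdfn} are themselves elements of $\theta_vI_g$ and are not simultaneously hit by the generic clause — this is exactly the statement that $w$ is responsible on the $g$-side, so $w+v$ is \emph{not} responsible on the $\theta_vg$-side (responsibility is pinned to $0$ or to $\UR$ of a block, and shifting by $v$ moves $0\mapsto v$), so the two clauses are disjoint.

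Next, probability preservation is immediate once well-definedness is in place: $\mathbf X$ is an i.i.d.\ family of Bernoulli$(\lambda_{\RL l})$ variables indexed by $\calZ$, and $\Bijmapp$ acts on $\{0,1\}^{I_g}$ by a bijection of the index set $I_g\to\theta_vI_g$ that sends $(y',l,w')$ to a point of the same level $l$ (only the first coordinate is translated by $v_j$ and the third coordinate is permuted among $\{0\}$, $\UR$'s and other points of $D$, $D+v$). Since the parameter $\lambda_{\RL l}$ depends only on $l$, the pushforward of $P_{\mathbf X|_{I_g}}$ under $\Bijmapp$ is $P_{\mathbf X|_{\theta_vI_g}}$, which is the claimed identity. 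Bijectivity follows from the same observation together with the symmetry of the construction under $v\leftrightarrow -v$: the map built from $-v$ and $\theta_vg$ is the two-sided inverse of $\Bijmapp$, because the ``responsibility switch'' in \eqref{firstdfn} is an involution (swapping the values at $(\cdot,l,w)$ and $(\cdot,l,\widetilde w - v)$, resp.\ their shifts) and the generic clause is a plain shift.

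The main obstacle — and the step I would spend most of the work on — is the equivalence $\SG(\configx,0|_{J_g})|_{\closure D}=g\Longleftrightarrow\SG(\Bijmapp\configx,0|_{\vartheta_vJ_g})|_{\closure D+v}=\theta_vg$, i.e.\ that the responsibility-switch genuinely reproduces the streetgrid, only translated by $v$, on $\closure D$. Here the plan is to run the two-step construction (initial grid $\IG$, then the iterative asphalting producing $\SG$) on both sides in lockstep and argue by induction on the iteration step $i$, using \thref{wecantamper} to know that only the values on $I_g\cup J_g$ (resp.\ $\theta_vI_g\cup\vartheta_vJ_g$) matter. For the $\IG$-part: the four boundary streets of $D$ and the fields inside it are determined by $\mathbf X(\cdot,\cdot,0)$ in the ranges recorded in \thref{wecantamper}, and by \eqref{firstdfn}/\eqref{Xzero} these values on the $\theta_vg$-side equal the $g$-side values shifted by $v$, so $\IG(\Bijmapp\configx,0|_{\vartheta_vJ_g})|_{\closure D+v}=\theta_v\big(\IG(\configx,0|_{J_g})|_{\closure D}\big)$; the vanishing of $\mathbf X$ on $\vartheta_vJ_g$ rules out intrusions of higher-superlevel streets into $\closure D+v$ exactly as $0|_{J_g}$ does for $\closure D$. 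For the iterative part: each asphalting step on a field $B$ (resp.\ $B+v$) reads off the Bernoulli process $\mathbf X(\cdot,l,\UR B)$ (resp.\ $\mathbf X(\cdot,l,\UR(B+v))=\mathbf X(\cdot,l,\UR B+v)$), and $\UR B$ is precisely the point responsible for level $l$ on $B$ (or $0$, if $0\in B$, in which case $v$ takes over the role on the shifted side); so the defining clause \eqref{firstdfn} is tailored to make the emplacement of streets of level $l$ on $B+v$ the $v$-translate of the emplacement on $B$. Iterating this down through the levels $\levelof^g(B)\le l<\lunten^g(B)$ and across the successively created sub-fields yields $\SG|_{\closure D+v}=\theta_vg$ on one side iff $\SG|_{\closure D}=g$ on the other. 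The care needed is purely in matching the index ranges $b_j(\cdot)-1\le y\le b_j'(\cdot)+\beta_m$ across the shift and in checking that the generic (non-responsible) clause, being a plain translation, does not disturb anything — both are exactly the facts isolated in \thref{wecantamper} and \thref{shiftedcantamper}, so the argument is a careful assembly rather than a new idea.
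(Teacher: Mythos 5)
Your proposal is correct and follows essentially the same route as the paper: well-definedness via \thref{responsibleforoneblock}, bijectivity via the inverse map built from $-v$ and $\theta_vg$, probability preservation from the fact that the underlying index bijection preserves the level $l$ (hence the Bernoulli parameter) and that $\mathbf X$ is an independent family, and the final equivalence by tracing the ``responsibility switch'' through the initial grid and the asphalting iteration. Your treatment of the equivalence is in fact slightly more explicit than the paper's terse sketch, and the one small imprecision — inferring that $w$ responsible on the $g$-side means $w+v$ is \emph{not} responsible on the $\theta_vg$-side (what actually excludes the generic clause is the \emph{second} bullet, $w=\widetilde w-v$ being responsible in $g$) — does not affect the argument and is no vaguer than the paper's own phrasing.
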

\begin{proof}
 For the first part of the definition, we remark that if $\widetilde w=w+v$, the two definitions in \eqref{firstdfn} coincide: $w=\widetilde w-v$. So, the two do not contradict each other immediately. Also, given any $l$, $w$ and $\widetilde w$ are, respectively, responsible for $l$ only in $B$ and $B+v$. This was shown in \thref{responsibleforoneblock}. In the second part, the two bullets make sure that only cases not yet covered by the first part are defined. So, we indeed did not commit the error of multiply defining things. 

 The verification of $\Bijmapp\configx\in\{0,1\}^{\theta_vI_g}$ consists in checking that the domain of $\Bijmapp\configx$ is contained in $\theta_vI_g$. Indeed, the indices $y$ are chosen in the correct range. Also, $l<\lunten^g(B)<\lunten^g(D)$. Finally, $\widetilde w,w+v\in B+v\subseteq D+v$. The same applies to the second part of the definition. 

 To prove the bijectivity of $\Bijmapp\ =\Bijmapp(v,g)$, we consider the inverse function, which is $\Bijmapp(-v,\theta_vg)$. To check that this is true, remark that the two parts of the definition of $\Bijmapp$ can be inverted separately; the responsible points are just reversed, and the responsibilities switched back. The points which are not responsible being identical, the values there get shifted back as well. 

 For the preservation of probability, note that $\Bijmapp$ leaves the levels intact, and replicates the same number of zeros and ones, just at different places. Then, the stationarity of the Bernoulli--processes takes effect. 

 The last statement is a consequence of the concept of switching responsibilities described above. The operator moves the values of $\configx$ at any point responsible in $g$ for the emplacement of streets of level $l$ in $B$ to the point which is in $\theta_vg$ responsible for the emplacement of streets of level $l$ on $B+v$. If one translates the concept of responsibility into the construcion of the streetgrid, one sees that $\SG(\Bijmapp,0|_{J_{\theta_vg}})$ reconstitutes indeed the shifted $g$ on the shifted domain. 

 The opposite inclusion follows from the above considerations on bijectivity. 
\end{proof}
\begin{thm}
 $\SG(\mathbf X)$ is stationary.
\end{thm}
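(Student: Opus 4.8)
The plan is to deduce stationarity from the bijective machinery of \thref{wecantamper}--\thref{bijmapfine}. Since $\SG(\mathbf X)$ takes values in $\Nnull^{\Z^2}$ with the product $\sigma$-algebra, it suffices to match finite-dimensional distributions, i.e.\ to show that for every box $E\subseteq\Z^2$, every $h\in\Nnull^{\closure E}$ and every $v\in\Z^2$,
\[
 P\big(\SG(\mathbf X)|_{\closure E}=h\big)=P\big(\SG(\mathbf X)|_{\closure E+v}=\theta_v h\big).
\]
(Since $\pm e_0,\pm e_1$ generate $\Z^2$ one could restrict to unit shifts, but the argument is the same for general $v$, so I would not bother.)

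First I would fix $E,h,v$ and let $B_0$ be the smallest box containing $\closure E\cup\{0,-v\}$, so in particular $0\in B_0$ and $-v\in B_0$. By \thref{blockaround}, and more precisely by the explicit construction in its proof (which produces, as a measurable function $D=D(\mathbf X)$ of $\mathbf X$, a block w.r.t.\ $\SG(\mathbf X)$ containing $B_0$), the event $\{\SG(\mathbf X)|_{\closure E}=h\}$ equals, up to a $P$-null set, the \emph{disjoint} union over all pairs $(D,g)$ — with $B_0\subseteq D$, $g\in\Nnull^{\closure D}$, $D$ a block w.r.t.\ $g$, and $g|_{\closure E}=h$ — of the events $\{D(\mathbf X)=D\}\cap\{\SG(\mathbf X)|_{\closure D}=g\}$; disjointness is immediate because $D(\mathbf X)$ is single-valued and then $\SG|_{\closure D}$ is determined. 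By \thref{wecantamper} the event $\{\SG(\mathbf X)|_{\closure D}=g\}$ equals $\{\mathbf X|_{I_g}\in S_g\}\cap\{\mathbf X|_{J_g}=0|_{J_g}\}$ with $S_g:=\{\configx\in\{0,1\}^{I_g}:\SG(\configx,0|_{J_g})|_{\closure D}=g\}$, and one checks that on this event the extra constraint $\{D(\mathbf X)=D\}$ is itself a function of $\mathbf X|_{I_g}$ (the faces of $D$ are read off from the $V^j$ restricted to the projections of $\closure D$, whose relevant values depend only on the coordinates collected in $I_g$). Hence, by independence of the coordinates of $\mathbf X$,
\[
 P\big(D(\mathbf X)=D,\ \SG(\mathbf X)|_{\closure D}=g\big)=P\big(\mathbf X|_{I_g}\in S_g'\big)\,P\big(\mathbf X|_{J_g}=0|_{J_g}\big)
\]
for the corresponding subset $S_g'\subseteq S_g$.

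Next I would transport each such term by the shift. By \thref{shiftedcantamper} one has $I_{\theta_v g}=\theta_vI_g$ and $J_{\theta_v g}=\vartheta_vJ_g$, and by \thref{bijmapfine} the operator $\Bijmapp=\Bijmapp(v,g)$ is a probability-preserving bijection $\{0,1\}^{I_g}\to\{0,1\}^{\theta_vI_g}$ which — because $\theta_v$ preserves the block property and the levels $\levelof,\lunten,\loben$, and the analogous block $D(\mathbf X)=D+v$ is read off in the same shift-equivariant fashion — carries $S_g'$ bijectively onto the set $S'_{\theta_v g}$ attached to the pair $(D+v,\theta_v g)$, so $P(\mathbf X|_{I_g}\in S_g')=P(\mathbf X|_{\theta_vI_g}\in S'_{\theta_v g})$. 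Moreover $\vartheta_v$ preserves the level component of every element of $\calZ$, and the Bernoulli parameters depend only on the level, so $P(\mathbf X|_{J_g}=0|_{J_g})=P(\mathbf X|_{\vartheta_vJ_g}=0|_{\vartheta_vJ_g})$. Combining, $P(D(\mathbf X)=D,\SG|_{\closure D}=g)=P(D(\mathbf X)=D+v,\SG|_{\closure D+v}=\theta_v g)$. Since $(D,g)\mapsto(D+v,\theta_v g)$ is a bijection between the index families of the two cylinder events (here $B_0+v$ is the smallest box containing $\closure(E+v)\cup\{0,v\}$, and $-v\in D$ guarantees the hypotheses of \thref{shiftedcantamper}), summing the last equality over $(D,g)$ yields the displayed identity, hence stationarity.

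The main obstacle is the bookkeeping in the disjoint decomposition: one has to pin down a \emph{canonical} enclosing block $D(\mathbf X)$ and argue that, conditionally on $\{\SG|_{\closure D}=g\}$, the event $\{D(\mathbf X)=D\}$ depends on $\mathbf X$ only through the coordinates in $I_g$, so that $\Bijmapp$ — which acts only inside $I_g\cup J_g$ — transports the whole joint event onto its shifted counterpart. All the genuine combinatorial content, namely the switching of responsibilities that turns a configuration producing $g$ into one producing $\theta_v g$, is already encapsulated in \thref{bijmap} and \thref{bijmapfine}; this last step merely assembles those facts into a cylinder-by-cylinder comparison.
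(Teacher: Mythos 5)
Your overall strategy matches the paper's: decompose the cylinder event according to an enclosing block $D$, use \thref{wecantamper} to reduce $\{\SG(\mathbf X)|_{\closure D}=g\}$ to a condition on $\mathbf X|_{I_g}$ together with $\{\mathbf X|_{J_g}\equiv 0\}$, and transport each term with the probability-preserving bijection $\Bijmapp$ from \thref{bijmapfine}. The difference, and the source of a gap, is your choice of indexing block. The paper decomposes according to the \emph{smallest} block w.r.t.\ $\SG(\mathbf X)$ containing a fixed box $B\ni 0,-v$; the crucial point there is that on $\{\SG(\mathbf X)|_{\closure D}=\widehat g\}$ the statement ``$D$ is the smallest block w.r.t.\ $\SG(\mathbf X)$ containing $B$'' is \emph{equivalent} to the purely combinatorial statement ``$D$ is the smallest block w.r.t.\ $\widehat g$ containing $B$'', since for any box $D'\subseteq D$ one has $\closure D'\subseteq\closure D$, so the block property of every candidate $D'$ is readable from $\widehat g$ alone. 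That indicator is a constant independent of $\mathbf X$, it transforms under $\theta_v$ trivially, and $\Bijmapp$ can be applied without any further measurability considerations.

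You instead condition on the value of the specific random block $D(\mathbf X)$ constructed in the proof of \thref{blockaround}. This is a legitimate disjoint decomposition, but it forces two additional claims: (a) on $\{\SG(\mathbf X)|_{\closure D}=g\}$ the event $\{D(\mathbf X)=D\}$ is a function of $\mathbf X|_{I_g}$; and (b) the restriction $S_g'$ of $S_g$ encoding $\{D(\mathbf X)=D\}$ is carried bijectively by $\Bijmapp(v,g)$ onto the analogously restricted set $S'_{\theta_v g}$, i.e.\ ``the block built from $B_0$ is $D$'' is mapped onto ``the block built from $B_0+v$ is $D+v$''. Claim (b) in particular is not automatic: $D(\mathbf X)$ is read off from the functions $V^j_\cdot$ and from $\levelof^{\SG}(B_0)$, which are anchored at the origin (the $V^j_\cdot$ are ``weakly monotone seen from $0$''), and $\Bijmapp$ is precisely the operator that reshuffles the role of the origin across the shift. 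Both (a) and (b) are asserted but not verified, and you correctly flag this as the main obstacle. Rather than proving them, the cleanest repair is to switch the decomposition variable to the smallest block containing a fixed box with both $0$ and $-v$ inside, as the paper does; then the conditioning becomes a deterministic function of $\widehat g$ and claims (a) and (b) evaporate.
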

\begin{proof}
 We need to show the invariance of $\SG(\mathbf X)$'s finite-dimensional marginal distributions under the arbitrary shifts in $\Z^2$. Fortunately, we can restrict ourselves to distributions on boxes and shift-vectors inside these boxes: if we need a farther shift, we just take a bigger box.

 Let $B\ni0$ be a box, $v\in\Z^2$ such that $-v\in B$, and $g\in\Nnull^B$.

 In the following calculations, the first equality is due to \thref{blockaround}, the second one is true because the smallest (w.r.t.\ the semi-order established by the subset-relation) block around $B$ is unique. The fourth equality holds because the block property depends only on $\closure D$, and for the sixth one we apply \thref{wecantamper} for one inclusion, the other one following directly from \thref{notation}. \thref{wecantamper} also implies the disjointness of $I_{\widehat g}$ and $J_{\widehat g}$ leading to the independence used for the seventh equality. For the last equality, we apply \thref{bijmapfine}.
\begin{align}
  \MoveEqLeft P\big(\SG(\mathbf X)|_B=g\big)\\
     &=P\Big(\bigcup_{\substack{D\supseteq B\\\text{ box}}}\big\{\SG(\mathbf X)|_B=g,\ D\text{ block w.r.t.\ }\SG(\mathbf X)\big\}\Big)\\
  &=\sum_{\substack{D\supseteq B\\\text{ box}}}P\big(\SG(\mathbf X)|_B=g,\ D\text{ is the smallest block w.r.t.\ }\SG(\mathbf X)\text{ containing }B\big)\\
     &=\sum_{\substack{D\supseteq B\\\text{ box}}}\sum_{\substack{\widehat g\in\Nnull^{\closure D}:\\\widehat g|_B=g}}P\big(\SG(\mathbf X)|_{\closure D}=\widehat g,\ D\text{ is the smallest block w.r.t.\ }\SG(\mathbf X)\text{ containing }B\big)\\
  &=\sum_{\substack{D\supseteq B\\\text{ box}}}\sum_{\substack{\widehat g\in\Nnull^{\closure D}:\\\widehat g|_B=g}}P\big(\SG(\mathbf X)|_{\closure D}=\widehat g,\ D\text{ is the smallest block w.r.t.\ }\widehat g\text{ containing }B\big)\\
     &=\sum_{\substack{D\supseteq B\\\text{ box}}}\sum_{\substack{\widehat g\in\Nnull^{\closure D}:\\\widehat g|_B=g}}\indic_{D\text{ is the smallest block w.r.t.\ }\widehat g\text{ containing }B}P\big(\SG(\mathbf X)|_{\closure D}=\widehat g\big)\\
  &=\sum_{\substack{D\supseteq B\\\text{ box}}}\sum_{\substack{\widehat g\in\Nnull^{\closure D}:\\\widehat g|_B=g}}\indic_{D\text{ smallest block}}\sum_{\configx\in\{0,1\}^{I_{\widehat g}}}P\big(\SG(\configx,0|_{J_{\widehat g}})|_{\closure D}=\widehat g,\ \mathbf X|_{I_{\widehat g}}=\configx,\ \mathbf X|_{J_{\widehat g}}\equiv0\big)\\
     &=\sum_{\substack{D\supseteq B\\\text{ box}}}\sum_{\substack{\widehat g\in\Nnull^{\closure D}:\\\widehat g|_B=g}}\indic_{D\text{ smallest block}}\sum_{\configx\in\{0,1\}^{I_{\widehat g}}}P(\mathbf X|_{I_{\widehat g}}=\configx)P\big(\mathbf X|_{J_{\widehat g}}\equiv0\big)\indic_{\SG(\configx,0|_{J_{\widehat g}})|_{\closure D}=\widehat g}\\
  &=\smashoperator[l]{\sum_{\substack{D\supseteq B\\\text{ box}}}}\smashoperator[r]{\sum_{\substack{\widehat g\in\Nnull^{\closure D}:\\\widehat g|_B=g}}}\indic_{D\text{ smallest block}}\smashoperator{\sum_{\configx\in\{0,1\}^{I_{\widehat g}}}}P\big(\mathbf X|_{\theta_vI_{\widehat g}}=\Bijmapp\configx\big)P\big(\mathbf X|_{\vartheta_vJ_{\widehat g}}\equiv0\big)\indic_{\SG(\Bijmap\configx,0|_{\vartheta_vJ_{\widehat g}})|_{\closure D+v}=\theta_v\widehat g}
\intertext{We continue by applying the bijectivity of $\Bijmapp$, and reverting the steps which lead here, but with respect to the shifted sets.} 
  &=\sum_{\substack{D\supseteq B\\\text{ box}}}\smashoperator[r]{\sum_{\substack{\widehat g\in\Nnull^{\closure D}:\\\widehat g|_B=g}}}\indic_{D\text{ smallest block}}\smashoperator{\sum_{\configy\in\{0,1\}^{\theta_vI_{\widehat g}}}}P\big(\mathbf X|_{\theta_vI_{\widehat g}}=\configy\big)P\big(\mathbf X|_{\vartheta_vJ_{\widehat g}}\equiv0\big)\indic_{\SG(\configy,0|_{\vartheta_vJ_{\widehat g}})|_{\closure D+v}=\theta_v\widehat g}\\
     &=\sum_{\substack{D\supseteq B\\\text{ box}}}\smashoperator[r]{\sum_{\substack{\widehat g\in\Nnull^{\closure D}:\\\widehat g|_B=g}}}\indic_{D\text{ smallest block}}\smashoperator{\sum_{\configy\in\{0,1\}^{\theta_vI_{\widehat g}}}}P\big(\SG(\configy,0|_{\vartheta_vJ_{\widehat g}})|_{\closure D+v}=\theta_v\widehat g,\mathbf X|_{\theta_vI_{\widehat g}}=\configy,\mathbf X|_{\vartheta_vJ_{\widehat g}}\equiv0\big)\\
  &=\sum_{\substack{D\supseteq B\\\text{ box}}}\sum_{\substack{\widehat g\in\Nnull^{\closure D}:\\\widehat g|_B=g}}\indic_{D\text{ smallest block w.r.t.\ }\widehat g\text{ containing }B}P\big(\SG(\mathbf X)|_{\closure D+v}=\theta_v\widehat g\big)\\
  \intertext{At this point, we need to adjust the summation. We perform some trivial shift operations and change the indices of the sums:}
     &=\sum_{\substack{D\supseteq B\\\text{ box}}}\sum_{\substack{\widehat g\in\Nnull^{\closure D}:\\\widehat g|_B=g}}\indic_{D+v\text{ smallest block w.r.t.\ }\theta_v\widehat g\text{ containing }B+v}P\big(\SG(\mathbf X)|_{\closure D+v}=\theta_v\widehat g\big)\\
  &=\sum_{\substack{D\supseteq B\\\text{ box}}}\sum_{\substack{\widetilde g\in\Nnull^{\closure D+v}:\\\widetilde g|_{B+v}=\theta_vg}}\indic_{D+v\text{ smallest block w.r.t.\ }\widetilde g\text{ containing }B+v}P\big(\SG(\mathbf X)|_{\closure D+v}=\widetilde g\big)\\
     &=\sum_{\substack{D'\supseteq B+v\\\text{ box}}}\sum_{\substack{\widetilde g\in\Nnull^{\closure{D'}}:\\\widetilde g|_{B+v}=\theta_vg}}\indic_{D'\text{ smallest block w.r.t.\ }\widetilde g\text{ containing }B+v}P\big(\SG(\mathbf X)|_{\closure{D'}}=\widetilde g\big).\label{lastequalline}
 \end{align}  

 On the other hand, because $-v\in B$ implies $0\in B+v$, we can apply \thref{blockaround} to the following (with subsequent steps similar to the ones just performed): 
 \begin{align}
  \MoveEqLeft P\big(\SG(\mathbf X)|_{B+v}=\theta_vg\big)\\
  &=P\Big(\bigcup_{\substack{D'\supseteq B+v\\\text{ box}}}\big\{\SG(\mathbf X)|_{B+v}=\theta_vg,\ D'\text{ block w.r.t.\ }\SG(\mathbf X)\big\}\Big)\\
  &=\sum_{\substack{D'\supseteq B+v\\\text{ box}}}P\big(\SG(\mathbf X)|_{B+v}=\theta_vg,\ D'\text{ is the smallest block w.r.t.\ }\SG(\mathbf X)\text{ containing }B+v\big)\\
  &=\sum_{\substack{D'\supseteq B+v\\\text{ box}}}\smashoperator[r]{\sum_{\substack{\widetilde g\in\Nnull^{\closure{D'}}:\\\widetilde g|_{B+v}=\theta_vg}}}P\big(\SG(\mathbf X)|_{\closure{D'}}=\widetilde g,\ D'\text{ is the smallest block w.r.t.\ }\SG(\mathbf X)\text{ containing }B+v\big)\\
  &=\sum_{\substack{D'\supseteq B+v\\\text{ box}}}\smashoperator[r]{\sum_{\substack{\widetilde g\in\Nnull^{\closure{D'}}:\\\widetilde g|_{B+v}=\theta_vg}}}P\big(\SG(\mathbf X)|_{\closure{D'}}=\widetilde g,\ D'\text{ is the smallest block w.r.t.\ }\widetilde g\text{ containing }B+v\big),
\end{align}
which is equal to \eqref{lastequalline}.
\end{proof}
\subsection{Mixing and ergodic properties}\label{mixing}
\begin{dfn}
 We say a family $(F_u)_{u\in\Z^2}$ of discrete random variables on the probability space $(\Omega,\filt F,\P)$ is \emph{mixing w.r.t.\ $\P$} if for any $v\in\Z^2\setminus0$, any finite box $B\subseteq\Z^2$, and any realizations $f_1,f_2:B\rightarrow\R$, it holds that
 \begin{equation}
  \big|\P(F|_B=f_1,F|_{B+nv}=\theta_{nv}f_2)-\P(F|_B=f_1)\P(F|_{B+nv}=\theta_{nv}f_2)\big|\xrightarrow[n\to\infty]{}0.
 \end{equation}
\end{dfn}
\begin{thm}\thlabel{SGmixing}
 The streetgrid $\SG(\mathbf X)$ is mixing w.r.t.\ $P$.
\end{thm}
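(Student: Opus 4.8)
The plan is to run, for the two windows $B$ and $B+nv$, the very same block decomposition that gave stationarity, and to show that the only remaining coupling between the two windows runs through the origin's Bernoulli processes $\mathbf X(\cdot,\cdot,0)$ at superlevels that are forced to grow with $n$; their joint influence is then controlled by the now-familiar estimate $\sum_m\beta_m\lambda_m=\sum_m(m+1)^{-2}<\infty$.

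\textbf{Set-up and truncation.} Fix $v\ne0$; by stationarity we may assume $B\ni0$, and we must bound $|P(\SG|_B=f_1,\SG|_{B+nv}=\theta_{nv}f_2)-P(\SG|_B=f_1)P(\SG|_{B+nv}=\theta_{nv}f_2)|$. For $R\in\N$ let $\mathcal E_R$ be the event that the smallest block w.r.t.\ $\SG(\mathbf X)$ containing $B$ is contained in the $R$-ball around $B$, and $\mathcal E_R'$ the analogous event for $B+nv$; by \thref{blockaround} $P(\mathcal E_R)\uparrow1$, and by stationarity $P(\mathcal E_R')=P(\mathcal E_R)$ for every $n$. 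On $\mathcal E_R\cap\mathcal E_R'$ one expands $P(\SG|_B=f_1,\SG|_{B+nv}=\theta_{nv}f_2,\mathcal E_R\cap\mathcal E_R')$ exactly as in the stationarity proof — a sum over the two minimal blocks $D\ni B$, $D'\ni B+nv$, their configurations $\widehat g=\SG|_{\closure D}$, $\widehat g'=\SG|_{\closure{D'}}$, and configurations $\configx\in\{0,1\}^{I_{\widehat g}}$, $\configx'\in\{0,1\}^{I_{\widehat g'}}$, of terms $P\big(\mathbf X|_{I_{\widehat g}}=\configx,\ \mathbf X|_{I_{\widehat g'}}=\configx',\ \mathbf X|_{J_{\widehat g}\cup J_{\widehat g'}}\equiv0\big)$ times indicator functions — using the analogue of \thref{IJ}--\thref{wecantamper} for a block not containing the origin: for such a $D'$ the points responsible in the sense of \thref{responsibility} are $\UR D'$ and the upper-right corners of the nested fields produced by the asphalting, together with finitely many coordinates of $\mathbf X(\cdot,\cdot,0)$ encoding the $\IG$-skeleton immediately around $D'$, while the set $J_{\widehat g'}$ forbids $1$'s in $\mathbf X(\cdot,\cdot,0)$, at superlevels above that of the $\IG$-field $\Phi$ enclosing $D'$, on a $y$-window around $D'$ \emph{and}, because of the funnel \eqref{funnelW}, on the segment joining $D'$ to the origin. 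Since that funnel window at superlevel $m$ only concerns coordinates within $\beta_m=m!^2$ of it, one also truncates to $\lunten^{\IG}(\Phi)\ge2m_1(n)$ with $m_1(n)\to\infty$ and $(m_1(n)+1)!^2=o(n|v|)$; the discarded mass is $P\big(\lunten^{\IG}(\Phi_{B+nv})<2m_1(n)\big)$, which tends to $0$ because the probability that $\mathbf X(\cdot,\cdot,0)$ has no $1$ of superlevel $>m_1$ over $[0,n|v_j|]$ is $\prod_{m>m_1}(1-\lambda_m)^{n|v_j|+O(1)}\to0$ for any slowly growing $m_1(n)$.

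\textbf{Disjointness and the correction factor.} For $n$ large the $R$-balls around $B$ and $B+nv$ are disjoint, so $I_{\widehat g}$ (supported in $D$ and in origin-coordinates at $y$-distance $O(R)$ from $0$) and $I_{\widehat g'}$ (supported in $D'$ and in origin-coordinates at $y$ near $n|v|$) are disjoint, the $I$'s and $J$'s are disjoint across, and $I_{\widehat g}\cap J_{\widehat g}=I_{\widehat g'}\cap J_{\widehat g'}=\emptyset$ by \thref{wecantamper}; the only possible overlap is $J_{\widehat g}\cap J_{\widehat g'}\subseteq\Z\times\Nnull\times\{0\}$, which has $y$-coordinate within $O(R)$ of $0$ (the range of $J_{\widehat g}$) and superlevel $m>m_1(n)$ (the range of the part of $J_{\widehat g'}$ that reaches back to the origin, after the second truncation). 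On this overlap both indicator constraints read $\mathbf X\equiv0$, so writing $N_m\le2(R+\beta_m+1)$ for the number of overlap coordinates at superlevel $m$, independence of the Bernoulli variables gives
\[
 P\big(\mathbf X|_{J_{\widehat g}\cup J_{\widehat g'}}\equiv0\big)=P\big(\mathbf X|_{J_{\widehat g}}\equiv0\big)\,P\big(\mathbf X|_{J_{\widehat g'}}\equiv0\big)\prod_{m>m_1(n)}(1-\lambda_m)^{-N_m},
\]
and $1\le\prod_{m>m_1(n)}(1-\lambda_m)^{-N_m}\le\exp\!\big(C\sum_{m>m_1(n)}\beta_m\lambda_m\big)=\exp\!\big(C\sum_{m>m_1(n)}(m+1)^{-2}\big)=:1+\eta_n$, with $\eta_n\to0$ uniformly over all $\widehat g,\widehat g',D,D'$ admitted by the truncation.

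\textbf{Conclusion and main obstacle.} Dropping the correction factor turns every summand into a product of a factor depending only on $I_{\widehat g},J_{\widehat g}$ and one depending only on $I_{\widehat g'},J_{\widehat g'}$, and resumming exactly as in the stationarity proof yields $P(\SG|_B=f_1,\mathcal E_R)\cdot P(\SG|_{B+nv}=\theta_{nv}f_2,\mathcal E_R')$, up to a further vanishing defect from the $\lunten^{\IG}(\Phi)$-truncation; hence the truncated joint probability lies between that product and $(1+\eta_n)$ times it, i.e.\ differs from it by at most $\eta_n$. Removing the truncations costs at most $2P(\mathcal E_R^c)+2P(\lunten^{\IG}(\Phi_{B+nv})<2m_1(n))$ in the joint term and the same amount in the product of marginals, all uniform in $n$ by stationarity and the estimates above. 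Given $\varepsilon>0$ one first fixes $R$ so that the $P(\mathcal E_R^c)$-terms are small, then takes $n$ large enough that $m_1(n)$ is large (killing the second truncation error) and $\eta_n<\varepsilon$; this proves mixing, and since $P(\SG|_{B+nv}=\theta_{nv}f_2)=P(\SG|_B=f_2)$ by stationarity, total ergodicity follows as well. The hard part is not any of these estimates but the bookkeeping behind the generalized $I_{\widehat g'},J_{\widehat g'}$ for a block far from the origin: one has to pin down precisely which coordinates of $\mathbf X(\cdot,\cdot,0)$ the $\IG$-skeleton around such a block depends on — a window around the block \emph{together with} the connecting segment $[0,n|v|]$, but only at superlevels above the level of the enclosing $\IG$-field — and to verify that this enclosing level grows with $n$, which is exactly what forces the overlapping constraints into superlevels $m$ with $\beta_m\gtrsim n|v|$ and makes their contribution summably small.
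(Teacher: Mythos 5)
Your strategy is genuinely different from the paper's. The paper introduces a single \emph{cutting event} $C_n$ --- the existence, somewhere strictly between $B$ and $B+nv$, of a vertical street of superlevel higher than anything met in $B$ or in $B+nv$ --- shows $P(C_n)\to1$ by a direct Bernoulli computation, and then observes that on $C_n$ the smallest blocks around $B$ and around $B+nv$ are disjoint, so by \thref{responsibleforoneblock} different points are responsible for the two and $G$, $H_{nv}$ become conditionally independent given $C_n$; a three-line conditional-probability identity finishes. The cutting event achieves in one stroke what your plan tries to squeeze out of a full joint re-expansion of the stationarity computation over the two windows plus an explicit correction-factor estimate.

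The gap in your argument is that the whole $I_g,J_g$ apparatus (\thref{IJ}, \thref{wecantamper}) is built, and only proved to work, for blocks $D\ni0$. For a block $D'\ni B+nv$ with $0\notin D'$ the $\IG$-skeleton around $D'$ depends, via the funnels \eqref{funnelW} and \eqref{funnelLu}, on the running maxima of the $W^j$ over the entire segment from $0$ out to $D'$; you correctly flag this as ``the hard part,'' but you do not actually produce the generalized $I_{\widehat g'},J_{\widehat g'}$, prove an analogue of \thref{wecantamper} for them, or justify the key structural claim that the part of $J_{\widehat g'}$ reaching back toward the origin lives only at superlevels $m>m_1(n)$. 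That claim is not automatic: it holds only conditionally on the enclosing $\IG$-field of $D'$ having high level, i.e.\ on there being a suitably high-level street in the intervening region --- which is precisely the paper's cutting event $C_n$, re-derived inside your bookkeeping but with the bookkeeping left undone. There is also a smaller imprecision: the overlap $J_{\widehat g}\cap J_{\widehat g'}$ is not confined to $y$ within $O(R)$ of the origin, because the $y$-window of $J_{\widehat g}$ at superlevel $m$ extends $\beta_m$ to the right of $D$ and for moderate $m$ this can already reach $B+nv$; your $m_1(n)$-truncation is meant to control this, but the interaction between the $R$-truncation and the $m_1(n)$-truncation has to be pinned down, and it isn't. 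The downstream estimate $\sum_{m>m_1(n)}\beta_m\lambda_m=\sum_{m>m_1(n)}(m+1)^{-2}\to0$ is fine, but by itself it does not close the argument.
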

\begin{proof}
 Take $B$ a box. Because of the stationarity of the process $\SG(\mathbf X)$, we can suppose $0\in B$ without loss of generality. Let $g\in\Nnull^B$. As for the shift, take $v\in\Z^2$ such that $v_0>0$. The case $v_1>0$ can be proven analogously. 

 Define the \emph{cutting--event}
\begin{equation}
 C_n:=\big\{\exists m>\RL{\levelof^{\SG}(B)}\vee\RL{\levelof^{\SG}(B+nv)},\ \exists\ b_0'(B)+\plannedwidth_m\le x\le b_0(B+nv)-1:\mathbf X(x,2m,0)=1\big\}.
\end{equation}
The meaning of this event is that between $B$ and $B+nv$, there is a vertical street of higher level than any of the streets in $g$ and $h$. 

The event $C_n$ satisfies, for $n\in\N$ large enough,
\begin{align}
 \MoveEqLeft P(C_n)=P\Big(\bigcup_{m>\RL{\levelof^{\SG}(B)}\vee\RL{\levelof^{\SG}(B+nv)}}\quad\smashoperator[r]{\bigcup_{b_0'(B)+\plannedwidth_m\le x\le b_0(B+nv)-1}}\ \big\{\mathbf X\big(x,2m,0\big)=1\big\}\Big)\\
 &=\sum_{\widehat m\in\Nnull}P\Big(\bigcup_{m>\widehat m}\quad\smashoperator[r]{\bigcup_{b_0'(B)+\plannedwidth_m\le x\le b_0(B+nv)-1}}\ \big\{\mathbf X\big(x,2m,0\big)=1\big\}\Big)P\Big(\RL{\levelof^{\SG}(B)}\vee\RL{\levelof^{\SG}(B+nv)}=\widehat m\Big)\\
 &\ge\sum_{\widehat m\in\N}P\Big(\smashoperator[r]{\bigcup_{b_0'(B)+\plannedwidth_{\widehat m}\le x\le b_0(B+nv)-1}}\ \big\{\mathbf X\big(x,2\widehat m,0\big)=1\big\}\Big)P\Big(\RL{\levelof^{\SG}(B)}\vee\RL{\levelof^{\SG}(B+nv)}=\widehat m-1\Big)\\
 &=\sum_{\widehat m\in\N}\Big(1-(1-\ROO_{\widehat m})^{nv_0+b_0(B)-b_0'(B)-\plannedwidth_{\widehat m}}\Big)P\Big(\RL{\levelof^{\SG}(B)}\vee\RL{\levelof^{\SG}(B+nv)}=\widehat m-1\Big)\xrightarrow[n\to\infty]{}1.
\end{align}
$C_n$ also has the property to render independent events happening on $B$ and $B+nv$: it implies that the smallest block around $B$ w.r.t.\ $\SG(\mathbf X)$ and the smallest block around $B+nv$ w.r.t.\ $\SG(\mathbf X)$ are disjoint, which means that different points are responsible for the two. We hence have, with the events
\begin{equation}
 G:=\{\SG(\mathbf X)|_B=g\}\text{ and }H_v:=\{\SG(\mathbf X)|_{B+v}=\theta_vh\},
\end{equation}
and if we denote by $C_n^c$ the complement of $C_n$,
\begin{align}
 \MoveEqLeft P(\SG(\mathbf X)|_B=g,\SG(\mathbf X)|_{B+nv}=\theta_{nv}h)\\
 &=P(G\cap H_{nv})\\
 &=P(G\cap H_{nv}\cap C_n^c)+P(G\cap H_{nv}\cap C_n)\\
 &=P(G\cap H_{nv}\cap C_n^c)+P(G\cap H_{nv}|C_n)P(C_n)\\
 &=P(G\cap H_{nv}\cap C_n^c)+P(G|C_n)P(H_{nv}|C_n)P(C_n)\\
 &=P(G\cap H_{nv}\cap C_n^c)+P(G\cap C_n)P(H_{nv}\cap C_n)/P(C_n)\\
 &\xrightarrow[n\to\infty]{}P(G)P(H_0).
\end{align}
\end{proof}
\begin{cor}
 The streetgrid $\SG(\mathbf X)$ is totally ergodic.
\end{cor}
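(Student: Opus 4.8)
The plan is to obtain total ergodicity as an immediate consequence of the mixing property \thref{SGmixing} together with stationarity, via the textbook implication ``mixing $\Rightarrow$ ergodic'', carried out for each shift direction separately. Fix $v\in\Z^2\setminus\{0\}$. Since $\theta_v$ and its inverse $\theta_{-v}$ have the same invariant sets, and since the mixing statement is symmetric under $v\mapsto -v$ once one uses invariance of the law, I may assume $v_0>0$ or $v_1>0$, so that \thref{SGmixing} applies in this direction. Write $\mu$ for the distribution of $\SG(\mathbf X)$ on $\Nnull^{\Z^2}$ and let $\Theta$ denote the shift $(\Theta F)_u:=F_{u-v}$. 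By the stationarity of $\SG(\mathbf X)$, the measure $\mu$ is $\Theta$-invariant, so $(\Nnull^{\Z^2},\mu,\Theta)$ is a measure-preserving system, and total ergodicity is precisely the ergodicity of each such $\Theta$.

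The first step is to rephrase \thref{SGmixing} in these terms. For a finite box $B\ni 0$ and $g,h\in\Nnull^B$, put $G:=\{F|_B=g\}$ and $H:=\{F|_B=h\}$; then $\{F|_{B+nv}=\theta_{nv}h\}=\Theta^nH$, and $\mu(\Theta^nH)=\mu(H)$ by invariance, so the conclusion of \thref{SGmixing} reads $\mu(G\cap\Theta^nH)\to\mu(G)\mu(H)$ as $n\to\infty$. Enlarging $B$, the same holds for any two cylinder events.

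Next I would pass to arbitrary events by the usual approximation argument. The algebra $\mathcal A_0$ of finite unions of cylinder events generates the product $\sigma$-algebra, so given Borel sets $A,A'$ and $\varepsilon>0$ one can choose $E,E'\in\mathcal A_0$ with $\mu(A\triangle E)<\varepsilon$ and $\mu(A'\triangle E')<\varepsilon$. Using $\Theta$-invariance of $\mu$ (hence $\mu(\Theta^nA'\triangle\Theta^nE')<\varepsilon$) together with the elementary bound $\bigl|[\mu(X\cap Y)-\mu(X)\mu(Y)]-[\mu(X')\mu(Y')-\mu(X'\cap Y')]\cdot(-1)\bigr|\le 2\mu(X\triangle X')+2\mu(Y\triangle Y')$, one gets $|\mu(A\cap\Theta^nA')-\mu(A)\mu(A')|\le|\mu(E\cap\Theta^nE')-\mu(E)\mu(E')|+4\varepsilon$. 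Writing $E$ and $E'$ as finite disjoint unions of cylinders and expanding bilinearly, the first term on the right is a finite sum of cylinder defects, each tending to $0$ by the previous paragraph; letting first $n\to\infty$ and then $\varepsilon\downarrow 0$ yields $\mu(A\cap\Theta^nA')\to\mu(A)\mu(A')$ for all Borel $A,A'$. Finally, if $A$ is $\Theta$-invariant, taking $A'=A$ gives $\mu(A)=\mu(A\cap\Theta^nA)\to\mu(A)^2$, whence $\mu(A)\in\{0,1\}$; thus $\theta_v$ is ergodic, and since $v\neq 0$ was arbitrary, $\SG(\mathbf X)$ is ergodic with respect to every shift, that is, totally ergodic.

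There is no serious obstacle here; this is standard ergodic theory. The only points needing care are bookkeeping ones: identifying the cylinder events $\{F|_{B+nv}=\theta_{nv}h\}$ with the iterates $\Theta^nH$ under the sign convention fixed for $\theta_v$, and noting that although \thref{SGmixing} is phrased only for $v$ with $v_0>0$ or $v_1>0$, this already delivers ergodicity of $\theta_v$ for every $v\neq 0$, because $\theta_v^{-1}=\theta_{-v}$ and a transformation and its inverse have the same invariant $\sigma$-algebra.
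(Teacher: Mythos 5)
Your argument is correct: it is the standard ``mixing implies ergodic'' implication, carried out via approximation by cylinder events, together with the observation that ergodicity of $\theta_v$ and of $\theta_{-v}$ are equivalent, which is exactly what is needed since the proof of \thref{SGmixing} only treats $v$ with $v_0>0$ or $v_1>0$. The paper gives no proof for this corollary and simply invokes this same textbook fact, so your proposal supplies precisely the intended (omitted) argument.
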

\subsection{Consequences}
\begin{cor}
 The environment $\omega$ is stationary.
\end{cor}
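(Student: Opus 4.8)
The statement to prove is: \emph{The environment $\omega$ is stationary.}

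The plan is to reduce the stationarity of $\omega=(\omega_u)_{u\in\Z^2}$ to the already-established stationarity of the streetgrid $\SG(\mathbf X)$. The key observation is that $\omega_u$ is a deterministic function of $\SG(\mathbf X)$ alone: indeed, by the definition in Subsection \ref{transprob}, whether $u$ lies in some street $B$ with $\RL{\levelof(B)}\ge2$ and width at least $\plannedwidth_{\RL{\levelof(B)}}$, and if so which lane $\Lane_{\diamondsuit,\heartsuit}^{\SG}(B)$ it belongs to and what the sublevel $\SL{\levelof(B)}$ is, is entirely determined by the restriction of $\SG(\mathbf X)$ to a suitable finite neighbourhood of $u$ — in fact to the (random but finite) street or field $\Srnd_u^{\SG}$ together with enough of its surroundings to read off its faces $b_j,b_j'$. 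So there is a measurable map $\Phi$, equivariant under lattice shifts, with $\omega_u=\Phi(\SG(\mathbf X))_u$ for all $u$.

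First I would make the equivariance precise: for every $v\in\Z^2$ and every configuration $f\in\Nnull^{\Z^2}$ that is a streetgrid, $\Phi(\theta_v f)_{u}=\Phi(f)_{u-v}$, i.e. $\Phi(\theta_v f)=\theta_v\Phi(f)$ in the sense of \thref{bijmap}'s shift notation. This is immediate from the construction, since $\theta_v$ sends streets to streets, preserves levels, widths and the emplacement of faces up to translation, and hence sends the lane $\Lane_{\diamondsuit,\heartsuit}^{\SG}(B)$ to $\Lane_{\diamondsuit,\heartsuit}^{\SG}(B+v)$; one only needs to check that $\omega_{\diamondsuit,\heartsuit}$, $\omega_{\diamondsuit,\heartsuit}\circ R$ and $\omega_{\frac14}$ are assigned according to data that is itself shift-equivariant. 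Then, for any finite box $B$ and any realization $(\varpi_u)_{u\in B}$ of transition kernels, the event $\{\omega|_B=\varpi\}$ is, up to a $P$-null set, a countable disjoint union over streetgrid-realizations $g$ on larger boxes $\closure D\supseteq B$ of the events $\{\SG(\mathbf X)|_{\closure D}=g\}$ with $\Phi(g)|_B=\varpi$; this is exactly the set-up already used in the proof that $\SG(\mathbf X)$ is stationary. Applying that stationarity termwise and using equivariance of $\Phi$ gives $P(\omega|_B=\varpi)=P(\omega|_{B+v}=\theta_v\varpi)$ for $-v\in B$, and as in the streetgrid proof arbitrary shifts are handled by enlarging $B$.

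The only genuine point requiring care — and the place I expect the main (minor) obstacle — is the measurability and finite-range dependence of $\Phi$: one must argue that $\omega_u$ depends on $\SG(\mathbf X)$ only through the values of $\SG(\mathbf X)$ on a $P$-a.s.\ finite set. This follows because $\Srnd_u^{\SG}$ is $P$-a.s.\ a finite box (it is a street or field of a streetgrid, and by \thref{blockaround} every point lies in a finite block), and the faces $b_j(\Srnd_u^{\SG}),b_j'(\Srnd_u^{\SG})$ can be read off from $\SG(\mathbf X)$ restricted to the closure of that box; everything else entering the definition of $\omega_u$ (the sublevel, the lane boundaries, the width test) is an arithmetic function of these faces and of $\levelof(\Srnd_u^{\SG})$. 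Once this is in place, the stationarity of $\omega$ is a formal consequence of the stationarity of $\SG(\mathbf X)$ via the equivariant factor map, and the decomposition-and-termwise-comparison argument is a verbatim repetition of the one in the preceding subsection, so I would simply cite it rather than rewriting it in full.
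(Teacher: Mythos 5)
Your proof is correct and follows the same approach as the paper: observe that $\omega$ is obtained from $\SG(\mathbf X)$ by a local, shift-equivariant map, so that stationarity is inherited from the stationarity of $\SG(\mathbf X)$. The paper's own proof is a one-sentence version of exactly this argument; the only cosmetic difference is that you spell out the equivariance and the $P$-a.s.\ finite-range dependence (the latter following immediately from $\Srnd_u^{\SG}$ being a box in the sense of \eqref{boxfaces}, so \thref{blockaround} is not strictly needed for finiteness).
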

\begin{proof}
 This is true because in order to determine every point $\omega(u)$, $u\in\Z^2$, the same function is applied to the $\SG$--values around $u$ in a  local and stationary manner, and because $\SG(\mathbf X)$ is stationary. 
\end{proof}
\begin{cor}
 The environment $\omega$ is mixing.
\end{cor}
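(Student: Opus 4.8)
The statement to prove is that the environment $\omega$ is mixing, given that the streetgrid $\SG(\mathbf{X})$ is mixing (Theorem \ref{SGmixing}).

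The key observation is that the environment $\omega_u$ is determined \emph{locally} from the streetgrid: there is a fixed radius $r$ (depending only on the structure of the lane/transition-probability definitions — we need to know $\levelof(B)$ for the street $B$ around $u$, whether $u$ lies near the boundary $b_k(B)$ or $b_k'(B)$, and whether the street is wide enough) such that $\omega_u$ is a deterministic function of $(\SG_w)_{w \in u + [-r,r]^2}$. Actually, one must be slightly careful here: to know which lane $u$ belongs to, one needs to locate the faces $b_k(B), b_k'(B)$ of the street $B = \Srnd^{\SG}(u)$, and a street can be arbitrarily wide, so the ``radius'' is not uniformly bounded. However, $\omega_u$ is still a measurable function of the \emph{entire} configuration $\SG$, and more importantly it is a function of $\SG$ restricted to the (random but a.s.\ finite) street $\Srnd^{\SG}(u)$ together with a bounded neighbourhood. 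So the plan is to write the environment on a box $B$ as a function of the streetgrid on a larger box, handling the unboundedness by a truncation/decomposition argument exactly as in the proof of stationarity and of Theorem \ref{SGmixing}.

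Concretely, I would proceed as follows. Let $B \ni 0$ be a finite box, $v \in \Z^2 \setminus \{0\}$, and $f_1, f_2 : B \to \R$ realizations of the environment. First, decompose the event $\{\omega|_B = f_1\}$ according to the smallest block $D$ w.r.t.\ $\SG(\mathbf{X})$ containing $B$, just as in Lemma \ref{blockaround} and the stationarity proof; on such a block $D$, the streetgrid restricted to $\closure{D}$ determines $\omega|_B$ (since all streets relevant to points of $B$ are contained in $\closure D$, the block being surrounded by higher-level streets). Hence $\{\omega|_B = f_1\} = \bigcupdot_{D \supseteq B} \bigcupdot_{g} \{\SG(\mathbf{X})|_{\closure D} = g,\ D \text{ smallest block}\}$ where the inner union is over those $g \in \Nnull^{\closure D}$ inducing environment $f_1$ on $B$ and making $D$ the smallest block. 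The same decomposition applies to $\{\omega|_{B+nv} = \theta_{nv}f_2\}$. Then I would invoke the cutting-event argument from the proof of Theorem \ref{SGmixing}: introduce $C_n$, the event that a high-level vertical (or horizontal, depending on $v$) street separates $B$ from $B+nv$; show $P(C_n) \to 1$; and observe that on $C_n$ the smallest blocks around $B$ and around $B+nv$ are disjoint, so the corresponding streetgrid events — and therefore the environment events — become conditionally independent. Summing over $D, g$ and using $P(C_n) \to 1$ gives
\begin{equation}
 \big|P(\omega|_B = f_1, \omega|_{B+nv} = \theta_{nv}f_2) - P(\omega|_B = f_1)P(\omega|_{B+nv} = \theta_{nv}f_2)\big| \xrightarrow[n\to\infty]{} 0.
\end{equation}

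The main obstacle — and the only non-routine point — is making precise that $\omega|_B$ is a function of $\SG|_{\closure D}$ for the smallest block $D$ around $B$, i.e.\ that knowing the streetgrid on $\closure D$ suffices to compute the lanes and transition kernels at every point of $B$. This requires checking that the street $\Srnd^{\SG}(u)$ around any $u \in B$, together with enough of its neighbourhood to apply Definition \ref{environment} and the lane definitions, lies inside $\closure D$; this holds because $D$ is a block, hence bordered by streets of level strictly above $\levelof^{\SG}(D) \geq \levelof^{\SG}(\Srnd^{\SG}(u))$, so $\Srnd^{\SG}(u) \subseteq D$ for all $u \in D$. Once this locality-within-a-block fact is established, the rest is a direct transcription of the block-decomposition from the stationarity proof combined with the cutting-event estimate from the proof of Theorem \ref{SGmixing}, with $\SG$-events replaced throughout by the coarser $\omega$-events, so I would keep that part brief.
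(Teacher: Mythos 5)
Your proposal is correct, but it localizes $\omega|_B$ in a genuinely different way from the paper. The paper observes that, on the event $\{\levelof^{\SG}(B)=k\}$, the restriction $\omega|_B$ is a deterministic function of $\SG$ on the \emph{fixed} thickened box $\overline B^{\plannedwidth_k}$ of thickness $\plannedwidth_k$; it then sums over $k,l$ and over the finitely many configurations on these deterministic boxes, applies the already-proved mixing of $\SG$ (Theorem \thref{SGmixing}) termwise, and passes to the limit. You instead condition on the random smallest block $D\supseteq B$, argue that $\omega|_B$ is $\SG|_{\closure D}$--measurable, and then re-run the cutting-event argument from the proof of Theorem \thref{SGmixing}. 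Your key locality claim is sound: for $u\in D$, the street $\Srnd^{\SG}(u)$ has level $\le\levelof^{\SG}(D)<\lunten^{\SG}(D)$, so $\partial D$ (all of whose points have level $\ge\lunten^{\SG}(D)$) is disjoint from $\Srnd^{\SG}(u)$; since $\Srnd^{\SG}(u)$ is a connected box meeting $D$ and missing $\partial D$, it is contained in $D$, and hence its faces, its width and the lane containing $u$ are all readable from $\SG|_{\closure D}$. This block-based localization is arguably cleaner (it avoids having to verify that $\plannedwidth_k$ is a large enough thickness to see the faces and full widths of all streets meeting $B$), at the cost of re-opening the cutting-event machinery rather than invoking Theorem \thref{SGmixing} as a black box; in particular you are implicitly relying on the fact, already used in the proof of Theorem \thref{SGmixing}, that $C_n$ forces the smallest blocks around $B$ and $B+nv$ to be disjoint and hence forces the corresponding $\SG$-events to factorize conditionally on $C_n$. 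Both routes work; yours parallels the stationarity and $\SG$-mixing proofs more closely, while the paper's keeps Theorem \thref{SGmixing} as an opaque ingredient.
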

\begin{proof}
 To prove this, we would like to carry over the arguments from the proof of \thref{SGmixing}. However there is an issue about $\omega$ (as a function of $\SG$) not being completely localized in the sense that in order to determine $\omega$ on a box $B$, one needs to know the width of the streets present in $B$. Recall that only if a street has its full planned width the biased transition probabilities are placed on it; else, the transition probabilities of a simple random walk are used. 
 
 Fortunately, it is possible to determine what $\omega$ looks like on $B$ by knowing $\SG$ on a box 
 \begin{equation}
  \overline B^{\plannedwidth_{\levelof^{\SG}(B)}}:=\{b_0(B)-\plannedwidth_{\levelof^{\SG}(B)},\dots,b_1(B)-\plannedwidth_{\levelof^{\SG}(B)}\}\times\{b_0'(B)-\plannedwidth_{\levelof^{\SG}(B)},\dots,b_1'(B)+\plannedwidth_{\levelof^{\SG}(B)}\}; 
 \end{equation}
 one migth want to think of this box as a thicker closure, with thickness $\plannedwidth_{\levelof^{\SG}(B)}$. In other words, $\omega|_B$ is $\SG(B)|_{\overline B^{\plannedwidth_{\levelof^{\SG}(B)}}}$--measurable. 

 We will use this fact in the following calculations. Take $\widetilde g,\widetilde h\in(S^2)^B$.
\begin{align}
 \MoveEqLeft P\big(\omega|_B=\widetilde g,\ \omega|_{B+nv}=\theta_{nv}\widetilde h\big)\\
 &=\sum_{k,l\in\N}P\big(\omega|_B=\widetilde g,\ \omega|_{B+nv}=\theta_{nv}\widetilde h,\ \levelof^{\SG}(B)=k,\ \levelof^{\SG}(B+nv)=l\big)\\
 &=\sum_{k,l\in\N}\smashoperator[r]{\sum_{g\in\N^{\overline B^{\plannedwidth_k}},\ h\in\N^{\overline B^{\plannedwidth_l}}}}P\big(\omega|_B=\widetilde g,\ \omega|_{B+nv}=\theta_{nv}\widetilde h,\ \levelof^{\SG}(B)=k,\ \levelof^{\SG}(B+nv)=l,\\
   &\qquad\qquad\qquad\qquad\qquad\qquad\qquad\qquad\qquad\qquad\qquad \SG|_{\overline B^{\plannedwidth_k}}=g,\ \SG|_{\overline B^{\plannedwidth_l}+nv}=\theta_{nv}h\big)\\
 &=\sum_{k,l\in\N}\sum_{\substack{g\in\N^{\overline B^{\plannedwidth_k}}\\h\in\N^{\overline B^{\plannedwidth_l}}}}P\Bigg(\splitfrac{\omega|_B=\widetilde g,}{\omega|_{B+nv}=\theta_{nv}\widetilde h,}\splitfrac{\levelof^{\SG}(B)=k,}{\levelof^{\SG}(B+nv)=l}\bigg|\splitfrac{\SG|_{\overline B^{\plannedwidth_k}}=g,}{\SG|_{\overline B^{\plannedwidth_l}+nv}=\theta_{nv}h}\Bigg)\\
   &\qquad\qquad\qquad\qquad\qquad\qquad\qquad\qquad\qquad\qquad\qquad P\big(\SG|_{\overline B^{\plannedwidth_k}}=g,\ \SG|_{\overline B^{\plannedwidth_l}+nv}=\theta_{nv}h\big)\\
 &=\sum_{k,l\in\N}\smashoperator[r]{\sum_{g\in\N^{\overline B^{\plannedwidth_k}},\ h\in\N^{\overline B^{\plannedwidth_l}}}}\indic_{\omega(g)|_B=\widetilde g,\ \omega(\theta_{nv}h)|_{B+nv}=\theta_{nv}\widetilde h,\ \levelof^{g}(B)=k,\ \levelof^{\theta_{nv}h}(B+nv)=l}\\
   &\qquad\qquad\qquad\qquad\qquad\qquad\qquad\qquad\qquad\qquad\qquad P\big(\SG|_{\overline B^{\plannedwidth_k}}=g,\ \SG|_{\overline B^{\plannedwidth_l}+nv}=\theta_{nv}h\big)\\ 
 &\xrightarrow[n\to\infty]{}\sum_{k,l\in\N}\sum_{\substack{g\in\N^{\overline B^{\plannedwidth_k}}\\h\in\N^{\overline B^{\plannedwidth_l}}}}\indic_{\omega(g)|_B=\widetilde g,\ \levelof^{g}(B)=k}\indic_{\omega(h)|_{B}=\widetilde h,\ \levelof^{h}(B)=l}P\big(\SG|_{\overline B^{\plannedwidth_k}}=g\big)P\big(\SG|_{\overline B^{\plannedwidth_l}}=h\big)\\ 
 &=P\big(\omega|_B=\widetilde g\big)P\big(\omega|_B=\widetilde h\big)
\end{align}
\end{proof}

\section{Properties of the random walk}\label{largeindeed}
\subsection{The main theorem and the idea of its proof}
 \begin{thm}\thlabel{Xtoinfty}
  \begin{equation}
   PP_0^\omega(X_t\cdot\vec1\xrightarrow[t\to\infty]{}\infty)>0, 
  \end{equation}
  where $\omega$ is the environment from \thref{environment} with its corresponding probability measure $P$, and $(X_t,P_0^\omega)$ the random walk from \eqref{Pomega(X)}.
 \end{thm}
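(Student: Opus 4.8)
The strategy is to realize the intuition from the heuristic subsection: $P$--almost surely there is, somewhere along the positive first coordinate axis, an admissible ``staircase'' of streets $B_M^0, B_M^1, B_{M+1}^0, B_{M+1}^1, \dots$ in the sense of \thref{corev,widthandlength,evstreetsjoin}, joined end to end, with planned widths $\plannedwidth_m = m!^2$ growing and lengths bounded by $\ROO_{m+1}^{-\alpha}$. Conditioning on a good realization of the environment (i.e. on the event that $M(\mathbf X) < \infty$, which holds $P$--a.s., together with the placement of the biased kernels $\omega_{\diamondsuit,\heartsuit}$ on the wide streets), it suffices to show that the quenched random walk $(X_t, P_0^\omega)$ has positive probability, starting from $0$, of entering this sequence of streets and then staying within the ``$+$''-lanes forever, drifting to $+\infty$ along $\vec 1$. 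Since $M$ is a.s.\ finite and the walk from $0$ reaches any fixed finite point with positive probability, the walk reaches the first street $B_M^0$ of the staircase with positive quenched probability; so it is enough to give a lower bound, uniform in the starting position within a ``$+$''-lane near the bottom of $B_M^0$, for the probability that the walk traverses the whole staircase without ever leaving the union of the ``$+$''-lanes.

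The core estimate is the single-street analogue of the toy model in the heuristic subsection. Fix a street $B = B_m^j$ of superlevel $m \ge M$ and consider the walk inside the ``$+$''-lanes $\Lane_{+,+}^{\SG}(B) \cup \Lane_{+,-}^{\SG}(B)$, where it feels a drift of size $\varepsilon$ in the forward coordinate direction (say $e_j$) and a drift of size $\varepsilon$ towards the interface between the two ``$+$''-lanes in the transverse coordinate. First I would handle the transverse coordinate: by \thref{environment} the transverse component of $X_t$, while the walk is in a ``$+$''-lane, is dominated by (respectively dominates) a one-dimensional nearest-neighbour walk with drift $\varepsilon$ towards the centre line, so by a standard Azuma/Hoeffding or exponential-martingale argument applied to $e^{\lambda(\text{transverse excursion})}$ and a union bound over the $\le \ROO_{m+1}^{-\alpha} = ((m+2)!)^{2\alpha}$ forward steps, the probability that the transverse displacement ever exceeds $c\sqrt{\text{length}} \sim ((m+2)!)^{\alpha}$ in magnitude before the walk exits the forward end of $B$ is at most something like $\exp(-c' \plannedwidth_m^{1/2}\text{-ish})$; more simply, since $\plannedwidth_m = m!^2$ grows super-exponentially while $(m+2)!^{\alpha}$ is only polynomially larger in the factorial scale... actually here one must be a little careful, so I would instead phrase it as: the relevant comparison is $\plannedwidth_m \gg (\text{length of }B_m^j)^{1/2}$, i.e.\ $m!^2 \gg ((m+2)!^2)^{1/2} = (m+2)!$, which fails! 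So the correct bookkeeping, exactly as in the heuristic, is cumulative: the walk at the $n$-th street has width $\plannedwidth_n = n!^2$ available, while the \emph{total} distance travelled so far is $\le 2(n+2)!^2$, whose square root is $\sqrt 2 (n+2)!$, and indeed $n!^2 \ge (n+2)!$ for $n$ large (since $n!^2/(n+2)! = n!/((n+1)(n+2)) \to \infty$). Thus the width of the current street dominates the square root of the total path length with a margin growing in $n$, leaving room for the fluctuations; this is the point where the restriction $m' \ge 5$ in \eqref{dfnM} and the choice of $\alpha$ matter.

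Concretely, the plan is: (i) define the stopping times at which the walk enters street $B_n^j$ at its bottom/left ``$+$''-lane region and show that with probability $\ge 1 - p_n$, where $\sum_n p_n < 1/2$ say, the walk crosses $B_n^j$ staying in the ``$+$''-lanes and enters $B_n^{1-j}$ (using \thref{evstreetsjoin} that consecutive streets actually join, \thref{widthandlength} for the width/length bounds, and \thref{evlevelright} that the biased kernels are in force since the streets have full planned width); (ii) the per-street failure probability $p_n$ is bounded, via the transverse-fluctuation estimate above plus the forward-drift estimate (the forward coordinate is a submartingale with drift $\varepsilon$, so it reaches the far end of a street of length $L \le ((n+2)!)^{2\alpha}$ before returning distance $\sim\plannedwidth_n/2$ backwards with probability $\ge 1 - e^{-c\varepsilon\plannedwidth_n}$ — a gambler's-ruin/exponential-martingale bound), by something summable; (iii) take the product $\prod_n (1-p_n) > 0$ to conclude that, conditionally on the good environment and on the walk having reached $B_M^0$, the walk stays in the staircase's ``$+$''-lanes forever with positive probability, which forces $X_t \cdot \vec 1 \to \infty$ since each street is traversed in its forward direction and the forward directions alternate between $e_0$ and $e_1$; (iv) integrate over the environment using $P(M < \infty) = 1$ and the fact that from $0$ the quenched walk reaches $B_M^0$ with positive probability. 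The main obstacle is step (ii): getting the transverse and forward fluctuation bounds to be genuinely summable requires the cumulative length bookkeeping of the heuristic (not a street-by-street comparison), and one has to be careful that each ``join'' between $B_n^{1-j}$ and $B_{n+1}^j$ happens inside the ``$+$''-lanes and that the walk can be rerouted into the next street's ``$+$''-lane before its transverse budget is spent — here the drift towards the centre of the ``$+$''-lanes (the $\heartsuit$ index in \thref{environment}) and the fact that widths grow are exactly what save the argument, and the bound must be made uniform over the (finitely many, by \thref{widthandlength}) possible entry positions.
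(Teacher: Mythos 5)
Your plan agrees with the paper's strategy at the outline level: condition on $M(\mathbf X)<\infty$, reach $B_M^0$ with positive quenched probability, then cross the staircase $B_M^0,B_M^1,B_{M+1}^0,\dots$ along the two ``$+$''-lanes with a per-stage failure probability that is summable, and multiply. However, what you have written is a plan with acknowledged holes rather than a proof, and the two places you flag as ``main obstacle'' are precisely where the paper's proof does nontrivial work that you have not reproduced.

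First, your street-by-street step (ii) conflates two distinct transit problems that the paper separates: a ``straight run'' along a lane, and the ``turn'' at a junction where the walk must cross from the $+$-lanes of one street into the $+$-lanes of the next (perpendicular) street. The paper cuts each superlevel into \emph{six} successive box-shaped transits $\mathcal B_m^{\mathtt a},\mathcal B_m^{\mathtt b},\mathcal B_m^{\mathtt c},\mathcal B_m^{\mathtt A},\mathcal B_m^{\mathtt B},\mathcal B_m^{\mathtt C}$, each with its own start, escape and target sets chosen so that consecutive target and start sets literally coincide (\thref{targetequalsstart}); this is what makes the strong Markov factorisation of the probability into a product over stages legitimate. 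Your formulation ``the walk crosses $B_n^j$ staying in the $+$-lanes and enters $B_n^{1-j}$'' hides exactly the delicate bit: near a junction the centring drift changes direction, the walk may overshoot into the wrong lane of the next street, and the bookkeeping of where the walk must be when it leaves one box and enters the next has to be made explicit. A second, related point you only wave at: the reason the per-box escape probability is computable at all is that each box $\mathcal B_m^\dagger$ is placed (via the origins $\mathcal O_m^\dagger$) inside a region where the quenched environment $\omega$ \emph{coincides with a fixed deterministic environment} $\varpi^\dagger$; this uses \thref{widthandlength} (full planned width, so the biased kernels are really in force) and \thref{evstreetsjoin}. You invoke these lemmas, but you should make explicit that the per-box probability is literally the probability in a deterministic model, which is what the Lemmas \thref{escapen1}--\thref{escapen6} compute.

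Second, your fluctuation bookkeeping in the middle paragraph is internally inconsistent: you first assert that the per-street comparison ``$m!^2\gg(m+2)!$'' fails, and two sentences later compute $n!^2/(n+2)!\to\infty$, i.e.\ that it holds. In fact both the per-street and the cumulative versions are fine; the clean way to see it (and what the paper actually does, e.g.\ in the proof of \thref{escapen1}) is not an Azuma/Hoeffding bound but a geometric-excursion bound: the transverse coordinate is dominated by a reflected random walk with constant negative drift $\varepsilon$, whose excursions from $0$ are stochastically dominated by i.i.d.\ geometrics, and the number of excursions is at most (a constant times) the length of the box, which is $\le\ROO_{m+1}^{-\alpha}=(m+2)!^{2\alpha}$. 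The escape probability is then bounded by $(m+2)!^{2\alpha}\cdot q^{\,\plannedwidth_m/16}$ for some $q<1$, which is summable in $m$ for any $\alpha>1$ because the exponent $\plannedwidth_m=m!^2$ dominates the polynomial-in-factorials prefactor. Spelling this out removes the apparent tension in your heuristic and shows that the restriction $\alpha>1$ (and $m\ge5$) is all that is needed.
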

 A similar assertion holds with $\vec1$ replaced by $-\vec1$. The two together imply \thref{mainth}.

 Recall the heuristical description at the beginning of Subsection \ref{large}. The idea of the proof of the \thnameref{Xtoinfty} is that the random walk $X_t$ has positive probability to follow the streets in the initial grid $\IG$, at least from some starting point onwards. The starting point has positive probability to be reached directly from the origin. From there the random walk proceeds exactly like described, except for the ``going straight'' part: as it is a \emph{random} walk, we have to take care of some fluctuations; but this is possible thanks to the streets growing nicely, see \thref{widthandlength}.

 A complete proof of \thref{Xtoinfty} will be given later. We start with a few technical 
\subsection{Definitions and Lemmata}  
\begin{dfn} \thlabel{hitting}
 We define the hitting time of the random walk $(X_t)_t$ of the set $B\subseteq\Z^2$ as
  \begin{equation}
   \tau_B:=\inf\{t\ge0|X_t\in B\},
  \end{equation}
  and the hitting time of the set $B'\subseteq\Z^2$ after hitting $B$ as 
  \begin{equation}
   \tau_{B,B'}=\tau(B,B'):=\inf\{t\ge\tau_B|X_t\in B'\}.
  \end{equation}
  $\tau_B$ and $\tau_{B,B'}$ are of course stopping times w.r.t.\ $\filt G_t:=\sigma(X_s,s\le t)$ the natural filtration. 
 \end{dfn}
 \begin{dfn}\thlabel{manysets}
  We define sequences of sets, some of which depend on the parameter $n\in\N$: 
  \begin{align}
   \mathcal B_m^{\mathtt a}(n)&:=\{-\frac{\plannedwidth_m}{16}+1,\dots,\frac{\plannedwidth_m}{16}\}\times\{-\frac{\plannedwidth_{m-1}}{16},\dots,n\},\\
   \mathcal S_m^{\mathtt a}&:=\Btwn\big(0,\frac{\plannedwidth_m}2e_1\big),\\
   \mathcal E_m^{\mathtt a}(n)&:=\{u\in\partial\mathcal B_m^{\mathtt a}(n)|u_1\le n\},\ m\ge5.
  \end{align}
  ``$\mathcal S$'' and ``$\mathcal E$'' stand for ``Start'' and ``Escape''. Furthermore, define the ``Target''-set
  \begin{equation}
   \mathcal T_m^{\mathtt a}(n):=\partial\mathcal B_m^{\mathtt a}(n)\setminus\mathcal E_m^{\mathtt a}(n)=\Btwn\big((-\frac{\plannedwidth_m}{16}+1,n+1),(\frac{\plannedwidth_m}{16},n+1)\big),\ m\ge5.
  \end{equation}
  The reason for the restriction to $m\ge5$ is the same as in \eqref{dfnM}.
 \end{dfn}
 \begin{lem}\thlabel{escapen1}
  Take some sequence $(n_m)_{m\ge5}$ such that $\frac{\plannedwidth_m}2\le n_m\le\plannedwidth_{m+2}^\alpha$, $m\ge5$. Also take a sequence of starting points $v_m\in\mathcal S_m^{\mathtt a}$, $m\ge5$. We consider the (non-random) environment defined by setting
 \begin{equation}
  \varpi^{\mathtt a}(u):=\begin{cases}
               \omega_{\nearrow}^0&\text{if }u_0\le0,\\
               \qquad\omega_{\nwarrow}^0&\text{if }u_0>0
             \end{cases}
 \end{equation}
 for all $u\in\Z^2$. It engenders the random walk $(X_t)_{t\ge0}$ in the environment $\varpi^{\mathtt a}$, starting in $v_m$, given by the measure $P_{v_m}^{\varpi^{\mathtt a}}$. It now holds that $P_{v_m}^{\varpi^{\mathtt a}}(\tau_{\mathcal E_m^{\mathtt a}(n_m)}<\tau_{\mathcal T_m^{\mathtt a}(n_m)})$ is summable in $m$, where $\tau_\cdot$ is from \thref{hitting}.
 \end{lem}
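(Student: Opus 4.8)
The statement to establish is that the probability that the random walk in the non-random environment $\varpi^{\mathtt a}$, started at $v_m\in\mathcal S_m^{\mathtt a}$, escapes through the sides $\mathcal E_m^{\mathtt a}(n_m)$ before reaching the top $\mathcal T_m^{\mathtt a}(n_m)$, is summable in $m$. The environment $\varpi^{\mathtt a}$ gives a constant drift of size $\varepsilon$ northwards (in $e_1$) everywhere, together with a drift in $e_0$ directed towards the line $\{u_0=0\}$ (east for $u_0\le 0$, west for $u_0>0$). So this is exactly the ``uniform drift along a line plus restoring drift towards the line'' situation from the heuristic paragraph; the box $\mathcal B_m^{\mathtt a}(n_m)$ has half-width $\plannedwidth_m/16$ and height roughly $n_m$, and the content of the lemma is the quantitative version of the claim that $\plannedwidth_m/16$ beats $\sqrt{n_m}$, since $n_m\le\plannedwidth_{m+2}^\alpha$ and $\plannedwidth_m=m!^2$ grows super-exponentially faster than any fixed power of $\plannedwidth_{m+2}$'s square root — wait, one must check the arithmetic: $\sqrt{\plannedwidth_{m+2}^\alpha}=(m+2)!^{\alpha}$ and $\plannedwidth_m/16=m!^2/16$; since $\alpha$ is fixed, $(m+2)!^{\alpha}=m!^{\alpha}\big((m+1)(m+2)\big)^{\alpha}$, which does eventually grow faster than $m!^2$ once $\alpha>2$, but for general $\alpha>1$ this is \emph{not} true. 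The correct reading is that one only needs fluctuations of order $\sqrt{\text{(vertical distance travelled)}}$ to stay in the box, and for summability it suffices to get an exponentially (in $m$) small bound — so the first task is to pin down precisely which comparison is needed.

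First I would set up the martingale comparison. Decompose $X_t=(X_t^{(0)},X_t^{(1)})$. The process $X_t^{(1)}-\varepsilon t$ (up to normalisation of the drift: each step moves $X^{(1)}$ by $+1$ with prob $\tfrac14+\varepsilon$, by $-1$ with prob $\tfrac14-\varepsilon$, and is unchanged with prob $\tfrac12$, giving drift $2\varepsilon$ per unit time) is a martingale with bounded increments; similarly $X_t^{(0)}$ has, while inside the box, a drift pointing toward $0$ of magnitude at least something like $2\varepsilon$ when $X_t^{(0)}>0$ and at most $-2\varepsilon$ when $X_t^{(0)}\le 0$ — i.e.\ $|X_t^{(0)}|$ is a supermartingale plus a bounded-increment term, so $X_t^{(0)}$ behaves like a mean-reverting (Ornstein–Uhlenbeck-type) walk confined near $0$. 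The plan is: (i) show that by time $\tau_{\mathcal T}$ the walk has made at least $\sim n_m$ vertical steps, and that $\tau_{\mathcal T}$ itself is not too large, using the strict positive vertical drift and a standard hitting-time / Azuma estimate — with probability $1-e^{-cn_m}$ we have $\tau_{\mathcal T}\le C n_m/\varepsilon$ and the walk hits the top before time $C n_m/\varepsilon$; (ii) on that time scale, bound $\P(\max_{t\le Cn_m/\varepsilon}|X_t^{(0)}|\ge \plannedwidth_m/16)$. For (ii) I would use the restoring drift: $|X_t^{(0)}|$ dominated by a biased random walk that is pushed back toward $0$, so $\P(\sup_{t\le T}|X_t^{(0)}|\ge R)\le C T e^{-c\varepsilon R}$ by a union bound over $t\le T$ combined with a Chernoff estimate for each fixed $t$ (the marginal of a mean-reverting walk has exponential tails with rate $\sim\varepsilon$, uniformly in $t$), or alternatively an exponential supermartingale $e^{\lambda|X_t^{(0)}|}$ with $\lambda\sim\varepsilon$ and optional stopping at $\tau_{\mathcal E}$. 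Either way one gets $\P_{v_m}^{\varpi^{\mathtt a}}(\tau_{\mathcal E_m^{\mathtt a}(n_m)}<\tau_{\mathcal T_m^{\mathtt a}(n_m)})\le C n_m e^{-c\varepsilon\plannedwidth_m/16}+e^{-cn_m}$.

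Finally I would check summability: $n_m\le\plannedwidth_{m+2}^{\alpha}=\big((m+2)!\big)^{2\alpha}$ grows only like a fixed power of a factorial, while $\plannedwidth_m=(m!)^2$, so $e^{-c\varepsilon\plannedwidth_m/16}=e^{-c\varepsilon(m!)^2/16}$ decays faster than any factorial power grows, whence $\sum_m C n_m e^{-c\varepsilon\plannedwidth_m/16}<\infty$; and $\sum_m e^{-cn_m}\le\sum_m e^{-c\plannedwidth_m/2}<\infty$ since $n_m\ge\plannedwidth_m/2$. This gives the claim. The main obstacle I anticipate is not the summability arithmetic but getting clean, honest constants in step (ii): one must be careful that the restoring drift really gives an exponential-in-$R$ (not merely polynomial) bound on $\sup_{t\le T}|X_t^{(0)}|$ uniformly over the relevant time horizon $T\sim n_m/\varepsilon$, because the time horizon itself is polynomially large in the factorials while we need the bound to beat it — so the exponential supermartingale $e^{\lambda|X^{(0)}|}$ approach (choosing $\lambda$ a small constant multiple of $\varepsilon$ so that $\E[e^{\lambda(|X_{t+1}^{(0)}|-|X_t^{(0)}|)}\mid\mathcal G_t]\le 1$ whenever $|X_t^{(0)}|\ge 1$) is the safest route, combined with optional stopping at $\tau_{\mathcal E}\wedge\tau_{\mathcal T}$ to control the probability of ever reaching the side walls. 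A secondary, purely bookkeeping point is the handling of the bottom face of $\mathcal B_m^{\mathtt a}(n)$ (the part of $\mathcal E_m^{\mathtt a}(n)$ with $u_1=-\plannedwidth_{m-1}/16$): since $v_m\in\mathcal S_m^{\mathtt a}$ sits at $u_1\ge 0$ and the vertical drift is strictly upward, exiting through the bottom also has exponentially small probability by the same vertical-martingale estimate, so it is absorbed into the $e^{-cn_m}$ term.
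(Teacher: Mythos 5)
Your proposal is correct and follows essentially the same route as the paper: decompose into coordinates, use the upward vertical drift to bound the relevant time horizon (and the probability of bottom exit) exponentially in $n_m$, then use the restoring horizontal drift to get an exponential-in-$\plannedwidth_m$ tail bound on the horizontal excursion, and finish by the same factorial summability arithmetic. The only cosmetic difference is that the paper controls $|X_{t,0}|$ by decomposing into i.i.d.\ excursions from $0$ with geometric tails and union-bounding over their number, whereas you union-bound over time steps (or use an exponential supermartingale) — these yield the same type of bound $C n_m\,e^{-c\plannedwidth_m}$.
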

 A picture of the sets from \thref{manysets} and the environment of \thref{escapen1} can be found in Figure \ref{escapeevents1}.
 \begin{figure}[tbh]
\begin{center}
\begin{tikzpicture}[scale=0.6]
\def\escapedecor{[red,decorate,decoration={zigzag,segment length=1.5mm,amplitude=0.5mm}]}
\def\uebergang[#1]#2{\begin{tikzpicture}[rotate=#1,xscale=#2, scale=0.5]\draw[<->](-0.19,0)--(0.75,0);\draw[<->](0,-0.19)--(0,0.75); \filldraw[fill=black](0,0)circle(0.05cm);\end{tikzpicture}}
\begin{scope}
    \def\oneeigthm{3}
    \def\oneeigthmmo{1.2}
    \def\tothealpha{14}
    \draw[<->](0,-\oneeigthmmo)+(0,-0.5)--(0,\tothealpha+0.5);
    \draw[<->](-\oneeigthm-0.5,0)--(\oneeigthm+0.5,0);
    \draw (0,0) node[anchor=south west]{$0$};
    \draw[ultra thick](0,0)--(0,4*\oneeigthm-2*\oneeigthmmo)+(0,-3);
    \draw[thick](0,-\oneeigthmmo)node[anchor=south east]{\strut$-\frac{\plannedwidth_{m-1}}{16}$}+(0.2,0) -- +(-0.2,0);
    \draw[thick](0,4*\oneeigthm-2*\oneeigthmmo)node[anchor=west]{$\frac{\plannedwidth_m}2$}+(0.2,0) -- +(-0.2,0);
    \draw[thick](0,\tothealpha) node[anchor=north east]{\strut$n$}+(0.2,0) -- +(-0.2,0);
    \draw[thick](-\oneeigthm-0.1,0) node[anchor=south west]{\strut$-\frac{\plannedwidth_m}{16}+1$}(-\oneeigthm,0)+(0,0.2) -- +(0,-0.2);
    \draw[thick](\oneeigthm+0.1,0) node[anchor=south east]{\strut$\frac{\plannedwidth_m}{16}$}(\oneeigthm,0)+(0,0.2) -- +(0,-0.2);
    \draw[very thin](-\oneeigthm,-\oneeigthmmo) rectangle (\oneeigthm,\tothealpha);
    \draw\escapedecor(-\oneeigthm,-\oneeigthmmo-0.1)--(\oneeigthm,-\oneeigthmmo-0.1);
    \draw\escapedecor(\oneeigthm+0.1,-\oneeigthmmo)--(\oneeigthm+0.1,\tothealpha);
    \draw\escapedecor(-\oneeigthm-0.1,-\oneeigthmmo)--(-\oneeigthm-0.1,\tothealpha);
    \draw[green](-\oneeigthm,\tothealpha+0.1)--(\oneeigthm,\tothealpha+0.1);
    \draw(-0.1,6.5)node[anchor=east]{$\mathcal S_m^{\mathtt a}$};
    \draw(1,\tothealpha)node[anchor=south]{$\mathcal T_m^{\mathtt a}(n)$};
    \draw(1,-\oneeigthmmo-0.1)node[anchor=north]{$\mathcal E_m^{\mathtt a}(n)$};
    \draw(1.5,4)node{\uebergang[90]{1}};
    \draw(-1.5,4)node{\uebergang[0]{1}};
\end{scope}
\begin{scope}[shift={(11,10.5)},scale=0.9]
      \def\onen{3}
      \fill[fill=green!20!white](-\onen,0)--(\onen,0) -- (2*\onen,\onen) -- (0,\onen) -- cycle;
      \draw[<->](0,-\onen.5)--(0,\onen.5);
      \draw[<->](-2*\onen-0.5,0)--(3*\onen+0.5,0);
      \draw (0,0) node[anchor=north east]{\strut$0$};
      \draw[thick](0,-\onen) node[anchor=south east]{\strut$-\frac{\plannedwidth_m}{16}$}+(0.2,0) -- +(-0.2,0);
      \draw[thick](0,\onen) node[anchor=north east]{\strut$\frac{\plannedwidth_m}{16}-1$}+(0.2,0) -- +(-0.2,0);
      \draw[thick](-2*\onen,0) node[anchor=south west]{\strut$-\frac{\plannedwidth_m}8+1$}+(0,0.2) -- +(0,-0.2);
      \draw[thick](-\onen,0)node[anchor=north]{$-\frac{\plannedwidth_m}{16}+1$}+(0,0.2) -- +(0,-0.2);
      \draw[thick](\onen,0)node[anchor=north]{$\frac{\plannedwidth_m}{16}$}+(0,0.2) -- +(0,-0.2);
      \draw[thick](3*\onen,0) node[anchor=south east]{\strut$\frac{3\plannedwidth_m}{16}$}+(0,0.2) -- +(0,-0.2);
      \draw[very thin](-2*\onen,-\onen) rectangle (3*\onen,\onen);
      \draw[ultra thick](-\onen,0)--(\onen,0)+(-2,0)node[anchor=north west]{};
      \draw[green](-\onen,\onen.1)--(3*\onen,\onen.1);
      \draw\escapedecor(-2*\onen,\onen.1)--(-\onen,\onen.1);
      \draw\escapedecor(-2*\onen-0.1,-\onen)--(-2*\onen-0.1,\onen);
      \draw\escapedecor(-2*\onen,-\onen-0.1)--(3*\onen,-\onen.1);
      \draw\escapedecor(3*\onen+0.1,-\onen)--(3*\onen+0.1,\onen); 
      \draw(\onen/2,-0.1)node[anchor=north]{$\mathcal S_m^{\mathtt b}$};
      \draw(\onen/2,\onen+0.1)node[anchor=south]{$\mathcal T_m^{\mathtt b}$};
      \draw(\onen/2,-\onen-0.1)node[anchor=north]{$\mathcal E_m^{\mathtt b}$};
      \draw(2.5,2)node[anchor=north]{\uebergang[270]{-1}};
      \draw(-2.5,2)node[anchor=north]{\uebergang[270]{-1}};
      \draw(2.5,-2.5)node[anchor=south]{\uebergang[90]{1}};
      \draw(-2.5,-2.5)node[anchor=south]{\uebergang[0]{1}};
\end{scope}
\begin{scope}[shift={(6,-1)},scale=0.85]
       \def\ones{1.5}
       \def\onee{3}
       \def\onef{6}
       \def\oneh{12}
       \def\one{24}
       \fill[fill=green!20!white](3*\ones,\ones)--(7*\ones,\ones) -- (10*\ones,4*\ones) -- (6*\ones,4*\ones) -- cycle;
       \draw[->](-0.2,0)--(11*\ones+0.5,0);
       \draw[->](0,-.2)--(0,4*\ones+0.5);
       \draw (0,0) node[anchor=north east]{\strut$0$};
       \draw[very thin](0,0) rectangle (11*\ones,4*\ones);
       \draw[ultra thick](3*\ones,\ones)--(7*\ones,\ones)node[anchor=south east]{};
       \draw\escapedecor(0,-0.1)--(11*\ones,-0.1);
       \draw\escapedecor(-0.1,0)--(-0.1,\onef);
       \draw\escapedecor(0,4*\ones+0.1)--(3*\ones,4*\ones+0.1);
       \draw\escapedecor(11*\ones+0.1,0)--(11*\ones+0.1,4*\ones);
       \draw[green](3*\ones,4*\ones+0.1)--(11*\ones,4*\ones+0.1);
       \draw[thick](11*\ones,-0.1) node[anchor=north]{\strut$\frac{11\plannedwidth_m}{16}$}(11*\ones,0)+(0,0.2) -- +(0,-0.2);
       \draw[thick](3*\ones,-0.1) node[anchor=north]{\strut$\frac{3\plannedwidth_m}{16}$}(3*\ones,0)+(0,0.2) -- +(0,-0.2);
       \draw[thick](7*\ones,-0.1) node[anchor=north]{\strut$\frac{7\plannedwidth_m}{16}-1$}(7*\ones,0)+(0,0.2) -- +(0,-0.2);
       \draw[thick](0,\onef) node[anchor=north west]{$\frac{\plannedwidth_m}4-2$}(0,\onef)+(0.2,0) -- +(-0.2,0);
       \draw[thick](0.1,\ones) node[anchor=west]{$\frac{\plannedwidth_m}{16}$}(0,\ones)+(0.2,0) -- +(-0.2,0);
       \draw(5*\ones,\ones-0.1)node[anchor=north]{$\mathcal S_m^{\mathtt c}$};
       \draw(5*\ones,\onef+0.1)node[anchor=south]{$\mathcal T_m^{\mathtt c}$};
       \draw(5*\ones,-0.1)node[anchor=north]{$\mathcal E_m^{\mathtt c}$};
       \draw(3,3)node[]{\uebergang[270]{-1}};
\end{scope}
\end{tikzpicture}
\caption{Escape and target sets used in \thref{escapen1,escapen2,escapen3}, together with their corresponding environment. Nothing is to scale.}\label{escapeevents1}
\end{center}
\end{figure}
 \begin{proof}
  We split the movement of $X_t$ into its two coordinates $X_t=(X_{t,0},X_{t,1})$. $X_{t,1}$ is stochastically minorated by a random walk on $\Z$ with uniform drift to the right (and possibility to sometimes stand still). The probability of this random walk to hit some negative $-a$ before wandering off towards infinity decays exponentially in $a$. 
 
  Also, the time to reach some positive $b$ grows linearily in $b$, in the sense that there is a positive, non-random constant $c_1$ such that the probability of not reaching $b$ up to time $c_1b$ decays exponentially fast in $b$. 

  As the probabilities set in $\varpi^{\mathtt a}$ to go left or right are uniformly bounded away from 1, the random walk $X_\cdot$ will spend a nontrivial fraction of its time going left and right. This means that there is some positive, non-random constant $c_2<1$ such that the probability that the number of times $X_\cdot$ goes left or right up to time $t$ is greater than $c_2t$ decays exponentially in $t$. 

  $|X_{t,0}|$ is stochastically dominated by a random walk reflected at $0$ with negative drift. Each excursion from $0$ of such a reflected random walk is stochastically dominated by a geometric random variable, and the excursions are independent; recall that the probability of a geometric random variable to be larger than $a$ decays exponentially in $a$.

  The number of excursions of $|X_{t,0}|$ up to some time can be estimated very crudely by the number of steps to the left or right up to that time. 
 
  If we put the pieces together, we find that the probability of escape to the left or right is for large $m$ bounded by the probability of at least one out of $c_2c_1(\plannedwidth_{m+2})^\alpha\ge c_2c_1n_m$ independent geometric random variables being larger than $\frac{\plannedwidth_m}{16}$, which can be verified to be still exponentially small in $m$. 

  As we did not care to keep track of exact rates, we settle for a much weaker statement of summability. 
 \end{proof}
 \begin{dfn}
  We need many more similar objects as the ones in \thref{manysets}:
  \begin{align}
   \mathcal B_m^{\mathtt b}&:=\{-\frac{\plannedwidth_m}8+1,\dots,\frac{3\plannedwidth_m}{16}\}\times\{-\frac{\plannedwidth_m}{16},\dots,\frac{\plannedwidth_m}{16}-1\},\\
   \mathcal S_m^{\mathtt b}&:=\Btwn\big((-\frac{\plannedwidth_m}{16}+1)e_0,\frac{\plannedwidth_m}{16}e_0\big),\\
   \mathcal E_m^{\mathtt b}&:=\{u\in\partial\mathcal B_m^{\mathtt b}|u_0\le-\frac{\plannedwidth_m}{16}\text{ or }u_1\le\frac{\plannedwidth_m}{16}-1\},\\
   \\
   \mathcal B_m^{\mathtt c}&:=\{0,\dots,\frac{11\plannedwidth_m}{16}\}\times\{0,\dots,\frac{\plannedwidth_m}4-2\},\\
   \mathcal S_m^{\mathtt c}&:=\Btwn\big((\frac{3\plannedwidth_m}{16},\frac{\plannedwidth_m}{16}),(\frac{7\plannedwidth_m}{16}-1,\frac{\plannedwidth_m}{16})\big),\\
   \mathcal E_m^{\mathtt c}&:=\{u\in\partial\mathcal B_m^{\mathtt c}|u_0\le\frac{3\plannedwidth_m}{16}-1\text{ or }u_1\le\frac{\plannedwidth_m}4-2\},\\
   \\
   \mathcal B_m^{\mathtt A}(n)&:=\{-\frac{\plannedwidth_m}{16},\dots,n\}\times\{-\frac{\plannedwidth_m}{16}+1,\dots,\frac{\plannedwidth_m}{16}\},\\
   \mathcal S_m^{\mathtt A}&:=\Btwn\big(0,\frac{\plannedwidth_m}2e_0\big),\\
   \mathcal E_m^{\mathtt A}(n)&:=\{u\in\partial\mathcal B_m^{\mathtt A}(n)|u_0\le n\},\\
   \\
   \mathcal B_m^{\mathtt B}&:=\{-\frac{\plannedwidth_m}{16},\dots,\frac{\plannedwidth_m}{16}-1\}\times\{-\frac{\plannedwidth_m}8+1,\dots,\frac{3\plannedwidth_m}{16}\},\\
   \mathcal S_m^{\mathtt B}&:=\Btwn\big((-\frac{\plannedwidth_m}{16}+1)e_1,\frac{\plannedwidth_m}{16}e_1\big),\\
   \mathcal E_m^{\mathtt B}&:=\{u\in\partial\mathcal B_m^{\mathtt B}|u_0\le\frac{\plannedwidth_m}{16}-1\text{ or }u_1\le-\frac{\plannedwidth_m}{16}\},\\
   \\
   \mathcal B_m^{\mathtt C}&:=\{0,\dots,\frac{\plannedwidth_{m+1}}4-2\}\times\{0,\dots,\frac{\plannedwidth_{m+1}}2+\frac{3\plannedwidth_m}{16}\},\\
   \mathcal S_m^{\mathtt C}&:=\Btwn\big((\frac{\plannedwidth_m}{16},\frac{3\plannedwidth_m}{16}),(\frac{\plannedwidth_m}{16},\frac{7\plannedwidth_m}{16}-1)\big),\\
   \mathcal E_m^{\mathtt C}&:=\{u\in\partial\mathcal B_m^{\mathtt C}|u_0\le\frac{\plannedwidth_{m+1}}4-2\text{ or }u_1\le\frac{3\plannedwidth_m}{16}-1\},\ m\ge5.
  \end{align}
  The target sets are
  \begin{gather}
   \mathcal T_m^\dagger:=\partial\mathcal B_m^\dagger\setminus\mathcal E_m^\dagger,\ \dagger\in\{\text{`` $\mathtt b$'',`` $\mathtt c$'',`` $\mathtt B$'',`` $\mathtt C$''}\},\\
   \mathcal T_m^{\mathtt A}(n):=\partial\mathcal B_m^{\mathtt A}(n)\setminus\mathcal E_m^{\mathtt A}(n),\ m\ge5,\ n\in\N,
  \end{gather}
  and they compute as 
  \begin{align}
   \mathcal T_m^{\mathtt b}&=\Btwn\big((-\frac{\plannedwidth_m}{16}+1,\frac{\plannedwidth_m}{16}),(\frac{3\plannedwidth_m}{16},\frac{\plannedwidth_m}{16})\big),\\
   \mathcal T_m^{\mathtt c}&=\Btwn\big((\frac{3\plannedwidth_m}{16},\frac{\plannedwidth_m}4-1),(\frac{11\plannedwidth_m}{16},\frac{\plannedwidth_m}4-1)\big),\\
   \mathcal T_m^{\mathtt A}(n)&=\Btwn\big((n+1,-\frac{\plannedwidth_m}{16}+1),(n+1,\frac{\plannedwidth_m}{16})\big),\\
   \mathcal T_m^{\mathtt B}&=\Btwn\big((\frac{\plannedwidth_m}{16},-\frac{\plannedwidth_m}{16}+1),(\frac{\plannedwidth_m}{16},\frac{3\plannedwidth_m}{16})\big),\\
   \mathcal T_m^{\mathtt C}&=\Btwn\big((\frac{\plannedwidth_{m+1}}4-1,\frac{3\plannedwidth_m}{16}),(\frac{\plannedwidth_{m+1}}4-1,\frac{\plannedwidth_{m+1}}2+\frac{3\plannedwidth_m}{16})\big),\ m\ge5,\ n\in\N.
  \end{align}
  Visualizations of these events can be found in Figures \ref{escapeevents1} and \ref{escapeevents2}. There, also the events of interest and the environments in the following Lemmata are shown.
 \end{dfn} 
 \begin{lem}\thlabel{escapen2}
  Define an environment by setting, for $u\in\Z^2$,
  \begin{equation}
  \varpi^{\mathtt b}(u):=\begin{cases}
                \omega_{\nwarrow}&\text{if }u_1<0,\ u_0>0,\\
                \omega_{\nearrow}&\text{else}
              \end{cases}
  \end{equation}
  which engenders the random walk $(X_t)$ starting in $v$ under $P_v^{\varpi^{\mathtt b}}$, $v\in\Z^2$. Let $v_m\in\mathcal S_m^{\mathtt b}$, $m\ge5$, be an arbitrary sequence. It then holds that $P_{v_m}^{\varpi^{\mathtt b}}(\tau_{\mathcal E_m^{\mathtt b}}<\tau_{\mathcal T_m^{\mathtt b}})$ is summable in $m$.
 \end{lem}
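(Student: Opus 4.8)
The plan is to reduce Lemma~\ref{escapen2} to a one-dimensional estimate analogous to the one used in the proof of \thref{escapen1}, since the environment $\varpi^{\mathtt b}$ is piecewise constant and the geometry is essentially the same \emph{mutatis mutandis}. First I would observe that the target set $\mathcal T_m^{\mathtt b}$ is the ``northern'' face of the box $\mathcal B_m^{\mathtt b}$, lying at height $u_1=\frac{\plannedwidth_m}{16}$ and over the horizontal range $-\frac{\plannedwidth_m}{16}+1\le u_0\le\frac{3\plannedwidth_m}{16}$, while the escape set $\mathcal E_m^{\mathtt b}$ consists of the western and southern faces. Starting from $v_m\in\mathcal S_m^{\mathtt b}=\Btwn\big((-\frac{\plannedwidth_m}{16}+1)e_0,\frac{\plannedwidth_m}{16}e_0\big)$, which sits on the first coordinate axis at height $0$, the walk must climb a height of $\frac{\plannedwidth_m}{16}$ to hit the target, while not drifting west past $-\frac{\plannedwidth_m}{16}$ nor south past $-\frac{\plannedwidth_m}{16}$.

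The key point is that in $\varpi^{\mathtt b}$ the second coordinate $X_{t,1}$ always feels a (weak, $\varepsilon$-size) drift to the north: both $\omega_{\nwarrow}$ and $\omega_{\nearrow}$ have $\omega(e_1)>\omega(-e_1)$. So $X_{t,1}$ is stochastically minorated by a lazy random walk on $\Z$ with uniform positive drift; exactly as in the proof of \thref{escapen1}, the probability that such a walk started at $0$ ever reaches $-\frac{\plannedwidth_m}{16}$ before escaping to $+\infty$ is exponentially small in $\plannedwidth_m=m!^2$, hence summable in $m$, and the time to first reach height $\frac{\plannedwidth_m}{16}$ is stochastically dominated by something linear in $\plannedwidth_m$, with exponentially small deviations. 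For the horizontal coordinate $X_{t,0}$: on the half-plane $u_0>0$ it is pushed west by $\omega_{\nwarrow}$ and on $u_0\le0$ it is pushed east by $\omega_{\nearrow}$, so $|X_{t,0}|$ is stochastically dominated by a random walk reflected near the axis with negative drift, whose excursions are dominated by independent geometric variables. The number of such excursions up to the (dominated, linear-in-$\plannedwidth_m$) time horizon is crudely bounded by the number of horizontal steps, which is at most linear in $\plannedwidth_m$; so the probability that one of $O(\plannedwidth_m)$ independent geometric variables exceeds $\frac{\plannedwidth_m}{16}$ is again exponentially small in $m$.

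Putting these together by a union bound over the finitely many ``bad'' sub-events (horizontal escape west of $-\frac{\plannedwidth_m}{16}$, vertical escape south of $-\frac{\plannedwidth_m}{16}$, and failure to reach the target within the linear time horizon), one gets that $P_{v_m}^{\varpi^{\mathtt b}}(\tau_{\mathcal E_m^{\mathtt b}}<\tau_{\mathcal T_m^{\mathtt b}})$ is bounded by a quantity that is exponentially small in $\plannedwidth_m$, hence summable in $m$; exactly as in \thref{escapen1} we do not bother tracking precise constants and settle for summability. The only mildly delicate point — and the one I expect to be the main obstacle to write cleanly — is the coupling argument showing that $|X_{t,0}|$ is genuinely dominated by a reflected random walk with negative drift despite the drift direction switching across the axis; one has to check that when $X_{t,0}=0$ the step distribution still does not systematically increase $|X_{t,0}|$ faster than a reflected walk would, which follows because both $\omega_{\nwarrow}$ and $\omega_{\nearrow}$ keep the left/right probabilities bounded away from $1$ and the drift, on the relevant side, points toward the axis. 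Everything else is a routine repetition of the arguments already given for \thref{escapen1}.
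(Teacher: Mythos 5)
There is a genuine gap: you have misread the environment $\varpi^{\mathtt b}$, and the domination claim for $X_{t,0}$ on which your proof hinges is false. You write that on the half-plane $\{u_0>0\}$ the walk is pushed west, but the definition applies $\omega_{\nwarrow}$ only on $\{u_1<0,\,u_0>0\}$; on the entire upper half-plane (and also on $\{u_0\le0\}$) the environment is $\omega_{\nearrow}$, whose horizontal drift is \emph{eastward}. Since the starting set lies on the $x$-axis and the uniform northward drift immediately carries the walker into $\{u_1\ge0\}$, the walker spends essentially all its time in a region where, east of the $y$-axis, the horizontal drift points \emph{away} from the axis. Hence $|X_{t,0}|$ is not dominated by a reflected random walk with negative drift; $X_{t,0}$ genuinely increases, and the eastern face at $u_0=\frac{3\plannedwidth_m}{16}+1$, which belongs to $\mathcal E_m^{\mathtt b}$ and which you omit from your final union bound, is a real competing escape. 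The box $\mathcal B_m^{\mathtt b}$ is deliberately asymmetric (reaching $\frac{3\plannedwidth_m}{16}$ in the east) precisely to accommodate this eastward drift.

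The escape to the east has to be controlled by a \emph{race} argument, not by a confinement argument. Wherever the horizontal drift is eastward, the northward drift has at least the same magnitude, so $X_{t,1}$ advances at a rate at least that of $X_{t,0}$. Starting from the easternmost point $\frac{\plannedwidth_m}{16}e_0$ of $\mathcal S_m^{\mathtt b}$, the walker need only travel $\frac{\plannedwidth_m}{16}$ north to hit $\mathcal T_m^{\mathtt b}$, while it must travel $\frac{\plannedwidth_m}{8}+1$ east to reach the escape face; hence escape east loses the race except on an exponentially small event. Your treatment of the southern escape (uniform northward drift) and the western escape (eastward drift on $\{u_0\le0\}$) matches the paper, but as written the proposal has no valid argument against escape to the east and relies on a stochastic domination that does not hold for this environment.
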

 \begin{proof}
  The arguments will be quite the same as in the proof of the last \thnameref{escapen1}.  
 
  There are four possibilities of escape to $\mathcal E_m^{\mathtt b}$, namely 
  \begin{itemize}
   \item to the south, which is exponentially becoming unlikely as the box grows with $m$, because of the uniform drift to the north.
   \item to the west, which is exponentially becoming unlikely because of the uniform drift pushing in the opposite direction on the western half--plane.
   \item to the east, which is exponentially becoming unlikely because the drift to the north is in the eastern half--plane at least as strong as the drift to the east, which means that the  linear speed of $X_{\cdot,1}$ is at least the same as the one of $X_{\cdot,0}$. With the box growing large, even if $X_\cdot$ starts at the easternmost possible point $\frac{\plannedwidth_m}{16}e_0$, by the time $X_{\cdot,1}$ reaches $\frac{\plannedwidth_m}{16}$, $X_{\cdot,0}$ will not have reached $\frac{3\plannedwidth_m}{16}+1$.
   \item to the horizontal piece of $\partial B_a$ in the northern west, which is exponentially becoming unlikely because the drift to the north provides that the probability of $X_{\cdot,1}$ being smaller than $0$ at the time $X_{\cdot,0}$ hits $\frac{\plannedwidth_m}{16}$ is decaying fastly.
  \end{itemize}
 \end{proof}

 \begin{lem}\thlabel{escapen3}
  $P_{v_m}^{\omega_{\nearrow}}(\tau_{\mathcal E_m^{\mathtt c}}<\tau_{\mathcal T_m^{\mathtt c}})$ is summable in $m$ for any arbitrary sequence $v_m\in\mathcal S_m^{\mathtt c}$, $m\ge5$.
 \end{lem}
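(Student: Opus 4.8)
\textbf{Plan for the proof of \thref{escapen3}.}
The idea is exactly that of \thref{escapen1,escapen2}: bound the escape probability by the probability that one of a polynomially-growing number of independent geometric (or sub-geometric) variables is large, and observe this is exponentially small in $m$, hence summable. Here the environment is the constant $\omega_{\nearrow}$ everywhere, so both coordinate processes $X_{t,0}$ and $X_{t,1}$ are, after removing the lazy steps, simple biased walks on $\Z$ with a fixed drift $\varepsilon/(\tfrac14+\varepsilon)$-ish to the right/north; there is no spatial dependence at all, which makes the estimate cleaner than in the two previous lemmata. The box $\mathcal B_m^{\mathtt c}$ has horizontal extent $\Theta(\plannedwidth_m)$ and vertical extent $\Theta(\plannedwidth_m)$, the start set $\mathcal S_m^{\mathtt c}$ sits at height $\plannedwidth_m/16$, and $\mathcal T_m^{\mathtt c}$ is the top face at height $\plannedwidth_m/4-1$; so the walker has to climb a vertical distance of order $\plannedwidth_m$ while staying inside a horizontal window of width of order $\plannedwidth_m$.

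First I would decompose $\{\tau_{\mathcal E_m^{\mathtt c}}<\tau_{\mathcal T_m^{\mathtt c}}\}$ into the three ways of hitting $\mathcal E_m^{\mathtt c}$ before the top: exit through the left face $\{u_0\le \tfrac{3\plannedwidth_m}{16}-1\}$, exit through the right face $\{u_0=\tfrac{11\plannedwidth_m}{16}\}$ (part of $\partial\mathcal B_m^{\mathtt c}$ not in $\mathcal T_m^{\mathtt c}$ — actually one checks the right face and bottom belong to $\mathcal E_m^{\mathtt c}$), and exit through the bottom face $\{u_1=0\}$. For the bottom exit: $X_{t,1}$ started at $\plannedwidth_m/16$ is a biased walk with positive drift, and the probability it ever reaches $0$ decays exponentially in $\plannedwidth_m/16$ by the standard gambler's-ruin/martingale estimate (the exponential martingale $\lambda^{X_{t,1}}$ for suitable $\lambda<1$). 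For the two horizontal exits: $X_{t,0}$ is a biased walk with positive drift starting at distance $\Theta(\plannedwidth_m)$ from either vertical face; the same gambler's-ruin bound gives exponential decay in $\plannedwidth_m$ for reaching the left face, and for the right face one uses that by the time the walker has moved right by $\Theta(\plannedwidth_m)$ it has, with overwhelming probability, already risen past height $\plannedwidth_m/4$ (since the horizontal and vertical drifts are comparable — both are $\tfrac14\pm\varepsilon$-type — so the linear speeds are of the same order, and the vertical distance to the target, $\Theta(\plannedwidth_m)$, is smaller than the horizontal distance to the right face), reducing this case to a large-deviation estimate for the difference of the two coordinate clocks. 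As in the earlier proofs, one controls the ``lazy'' steps (steps in the orthogonal direction or a self-loop — there is no self-loop here, but the coordinate chosen fluctuates) by noting the number of genuine $X_{\cdot,1}$-moves up to time $t$ exceeds $c_2 t$ except on an exponentially small event, and the hitting time of height $\plannedwidth_m/4$ is at most $c_1\plannedwidth_m$ with exponentially small exceptional probability; summing the $O(\plannedwidth_m)$-many step estimates still leaves something exponentially small in $m$ because $\plannedwidth_m=m!^2$ grows super-polynomially.

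Putting the three bounds together gives $P_{v_m}^{\omega_\nearrow}(\tau_{\mathcal E_m^{\mathtt c}}<\tau_{\mathcal T_m^{\mathtt c}})\le C\exp(-c\,\plannedwidth_m)$ uniformly over the choice of $v_m\in\mathcal S_m^{\mathtt c}$ (the estimates only used the distances from $\mathcal S_m^{\mathtt c}$ to the faces, which are all $\Theta(\plannedwidth_m)$), and since $\sum_m\exp(-c\,m!^2)<\infty$ the summability follows. I expect no real obstacle here: the main subtlety is the right-face exit, where one must argue that the vertical coordinate outpaces the horizontal one so that the walker leaves through the top rather than the right side; this is handled by comparing the two linear speeds and a large-deviation bound, exactly the third bullet in the proof of \thref{escapen2}. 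As in that proof, since exact rates are irrelevant, I would not track constants and simply conclude summability.
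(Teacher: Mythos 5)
Your proposal matches the paper's approach: the paper explicitly omits the proof of this lemma, remarking only that the arguments are the same as for \thref{escapen1} and \thref{escapen2}, which is precisely the route you take (exit--face decomposition, gambler's ruin for the bottom and left faces, a speed comparison for the right face since the $e_0$ and $e_1$ drifts in $\omega_\nearrow$ are equal while the vertical distance $3\plannedwidth_m/16$ to $\mathcal T_m^{\mathtt c}$ is strictly less than the horizontal distance $\ge\plannedwidth_m/4$ to the right face). Your reading of the geometry --- that the entire right and bottom faces and the left portion of the top face lie in $\mathcal E_m^{\mathtt c}$ --- is also correct.
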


 \begin{lem}\thlabel{escapen4}
  Take some sequence $(n_m)_{m\ge5}$ such that $\frac{\plannedwidth_m}2\le n_m\le\plannedwidth_{m+2}^\alpha$, $m\ge5$. Also take a sequence of starting points $v_m\in\mathcal S_m^{\mathtt A}$, $m\ge5$. Define the environment by setting
 \begin{equation}
  \varpi^{\mathtt A}(u):=\begin{cases}
               \omega_{\searrow}&\text{if }u_1>0,\\
               \omega_{\nearrow}&\text{if }u_1\le0,\ u\in\Z^2.
             \end{cases}
 \end{equation}
 It now holds that $P_{v_m}^{\varpi^{\mathtt A}}(\tau_{\mathcal E_m^{\mathtt A}(n_m)}<\tau_{\mathcal T_m^{\mathtt A}(n_m)})$ is summable in $m$.
 \end{lem}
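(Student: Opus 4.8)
The plan is to observe that Lemma \ref{escapen4} is the image of \thnameref{escapen1} under the reflection $R$ that exchanges the two coordinate axes: reflecting $\varpi^{\mathtt a}$ sends $\omega_\nearrow$ to $\omega_\nearrow$ and $\omega_\nwarrow$ to $\omega_\searrow$, and turns the half--plane condition $u_0\le0$ into $u_1\le0$, so the reflected environment is exactly $\varpi^{\mathtt A}$; likewise $\mathcal B_m^{\mathtt A}(n)$, $\mathcal S_m^{\mathtt A}$, $\mathcal E_m^{\mathtt A}(n)$, $\mathcal T_m^{\mathtt A}(n)$ are, up to the harmless replacement of $-\plannedwidth_{m-1}/16$ by $-\plannedwidth_m/16$ in the ``backward'' boundary, the reflections of the corresponding objects in \thref{manysets}. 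Hence the proof of \thref{escapen1} applies after swapping the roles of the two coordinates; I would nonetheless recall its structure, since the geometry is slightly different.

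First I would split $X_t=(X_{t,0},X_{t,1})$. Both kernels $\omega_\searrow$ and $\omega_\nearrow$ occurring in $\varpi^{\mathtt A}$ give mass $\tfrac14+\varepsilon$ to $+e_0$ and $\tfrac14-\varepsilon$ to $-e_0$, so, uniformly in the current position, the ``forward'' coordinate $X_{\cdot,0}$ is stochastically minorized by a lazy nearest--neighbour walk on $\Z$ with uniform drift $2\varepsilon$ to the right. Consequently (i) the probability that $X_{\cdot,0}$ ever reaches $-\plannedwidth_m/16$ before escaping to $+\infty$ decays exponentially in $\plannedwidth_m$, and (ii) there is a non--random $c_1$ such that the probability that $X_{\cdot,0}$ fails to reach $n_m+1$ by time $c_1 n_m$ decays exponentially in $n_m$. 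Since the $e_1$--probabilities of $\varpi^{\mathtt A}$ are bounded away from $1$, there is a non--random $c_2\in(0,1)$ such that, up to time $c_1 n_m$, the walk takes at least $c_2c_1 n_m$ vertical steps, except on an event of exponentially small probability. Note that every starting point $v_m\in\mathcal S_m^{\mathtt A}$ has second coordinate $0$ and first coordinate at most $\plannedwidth_m/2\le n_m$, so starting from $v_m$ only shortens the horizontal crossing and affects none of these estimates.

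Next I would control the transverse coordinate: when $X_{t,1}>0$ the kernel $\omega_\searrow$ pushes south and when $X_{t,1}\le0$ the kernel $\omega_\nearrow$ pushes north, so $|X_{\cdot,1}|$ is stochastically dominated by a walk reflected at $0$ with a fixed negative drift. Its successive excursions away from $0$ are independent and each is dominated by a geometric variable with a fixed parameter, whose tail beyond a level $a$ is $\le q^a$ for a non--random $q<1$; and the number of such excursions up to a given time is crudely bounded by the number of vertical steps. Combining the two paragraphs, escaping through $\mathcal E_m^{\mathtt A}(n_m)$ forces, off the already discarded exponentially rare events, one of at most $c_2c_1 n_m$ independent geometric variables to exceed $\plannedwidth_m/16$, whence
$$P_{v_m}^{\varpi^{\mathtt A}}\big(\tau_{\mathcal E_m^{\mathtt A}(n_m)}<\tau_{\mathcal T_m^{\mathtt A}(n_m)}\big)\le (\text{exp.\ small in }m)+c_2c_1\, n_m\, q^{\plannedwidth_m/16}.$$
Since $n_m\le\plannedwidth_{m+2}^\alpha=(m+2)!^{2\alpha}$ and $\plannedwidth_m=m!^2$, and $m!^2$ grows far faster than $\log\!\big((m+2)!^{2\alpha}\big)$, the last term tends to $0$ super--fast and is summable; the exponentially small terms are summable a fortiori.

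The main obstacle is purely bookkeeping: making the minoration/domination statements precise (the lazy forward walk, reflection of $|X_{\cdot,1}|$ at $0$, the coupling with i.i.d.\ geometric excursion lengths) while keeping all constants independent of $m$ --- exactly as in \thref{escapen1}. As the statement only asks for summability and no rates are tracked, I would present these comparisons briefly and point to the argument already carried out there.
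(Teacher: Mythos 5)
Your proposal is correct and takes the same route the paper does: \thref{escapen4} is obtained from \thref{escapen1} by the coordinate swap $R$ (which sends $\omega_\nwarrow$ to $\omega_\searrow$ and fixes $\omega_\nearrow$), the boxes and boundary sets are the reflections of those in \thref{manysets} up to the harmless enlargement of the backward boundary to $-\plannedwidth_m/16$, and the forward-minorization / transverse-domination / geometric-excursion estimate reproduces the paper's proof of \thref{escapen1} faithfully; the paper itself simply remarks that the arguments for \thref{escapen3}--\thref{escapen6} ``are the same as in the two preceding proofs.'' One small slip: you write that the walk takes \emph{at least} $c_2c_1n_m$ vertical steps but then use $c_2c_1n_m$ as an \emph{upper} bound on the number of excursions --- the statement you want (and the one the paper uses) is that the number of transverse steps up to time $c_1n_m$ is \emph{at most} $c_2c_1n_m$ with overwhelming probability; this does not affect the conclusion, since even the trivial bound $c_1n_m$ gives super-exponential decay of $c_1n_m\,q^{\plannedwidth_m/16}$.
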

 \begin{lem}\thlabel{escapen5}
  Define an environment by setting, for $u\in\Z^2$,
  \begin{equation}
  \varpi^{\mathtt B}(u):=\begin{cases}
                \omega_{\searrow}&\text{if }u_0<0,\ u_1>0,\\
                \omega_{\nearrow}&\text{else},\\
              \end{cases}
  \end{equation}
  which engenders the random walk $(X_t)$ starting in $v$ under $P_v^{\varpi^{\mathtt B}}$, $v\in\Z^2$. Let $v_m\in\mathcal S_m^{\mathtt B}$ be an arbitrary sequence. It then holds that $P_{v_m}^{\varpi^{\mathtt B}}(\tau_{\mathcal E_m^{\mathtt B}}<\tau_{\mathcal T_m^{\mathtt B}})$ is summable in $m$.
 \end{lem}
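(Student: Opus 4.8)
The plan is to follow exactly the route used in the proofs of \thref{escapen1,escapen2,escapen4}: split $X_t=(X_{t,0},X_{t,1})$, enumerate the (here, four) ways the walk can reach $\mathcal E_m^{\mathtt B}$ before $\mathcal T_m^{\mathtt B}$, show that each has probability exponentially small in $\plannedwidth_m=m!^2$, and conclude summability by a union bound together with the superexponential growth of $\plannedwidth_m$. The geometry to keep in mind is that $\mathcal T_m^{\mathtt B}$ is the right face $\{u_0=\frac{\plannedwidth_m}{16}\}$ of the box (at heights between $-\frac{\plannedwidth_m}{16}+1$ and $\frac{3\plannedwidth_m}{16}$), while the whole top face lies in $\mathcal E_m^{\mathtt B}$; the walk starts on $\mathcal S_m^{\mathtt B}\subseteq\{u_0=0\}$ at a height at most $\frac{\plannedwidth_m}{16}$, so reaching $\mathcal T_m^{\mathtt B}$ before $\mathcal E_m^{\mathtt B}$ amounts to moving east by $\frac{\plannedwidth_m}{16}$ while keeping $X_{\cdot,1}$ inside $[-\frac{\plannedwidth_m}{16}+1,\frac{3\plannedwidth_m}{16}]$.

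First I would record the drifts of $\varpi^{\mathtt B}$. Both $\omega_\nearrow$ and $\omega_\searrow$ assign mass $\frac14+\varepsilon$ to $+e_0$ and $\frac14-\varepsilon$ to $-e_0$, so $X_{\cdot,0}$ is (stochastically) minorated by a walk on $\Z$ with uniform drift $2\varepsilon$; by the standard martingale and large-deviation estimates for biased walks already invoked in \thref{escapen1}, it reaches $\frac{\plannedwidth_m}{16}$ by some time $T_m=c\,\plannedwidth_m$ off an event of probability exponentially small in $\plannedwidth_m$, and the probability that it ever reaches $-\frac{\plannedwidth_m}8+1$ (escape to the west) is likewise exponentially small. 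This disposes of the western escape and fixes a deterministic time horizon $T_m$ on which the right face will have been reached.

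For the escape to the south (to $u_1\le-\frac{\plannedwidth_m}{16}$) I would reuse the excursion argument of \thref{escapen1}: on the half-plane $\{u_1\le0\}$ the environment is uniformly $\omega_\nearrow$, so each downward excursion of $X_{\cdot,1}$ below height $0$ is stochastically dominated by a geometric random variable, the number of such excursions before time $T_m$ is at most the number of steps, hence $O(\plannedwidth_m)$, and the probability that a single excursion exceeds $\frac{\plannedwidth_m}{16}$ is exponentially small in $\plannedwidth_m$; the union bound over $O(\plannedwidth_m)$ excursions still decays exponentially. The same estimates, together with the horizon $T_m$, bound the southern escape.

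The delicate case — and the one I expect to be the main obstacle — is the escape to the \emph{north}, i.e.\ $X_{\cdot,1}$ reaching $\frac{3\plannedwidth_m}{16}$ before $X_{\cdot,0}$ reaches $\frac{\plannedwidth_m}{16}$. Here one must exploit that the two coordinates drift at the \emph{same} rate $2\varepsilon$ (and that on the quadrant $\{u_0<0,u_1>0\}$ the drift on $X_{\cdot,1}$ even points the wrong way, which only helps). Starting from height $v_{m,1}\le\frac{\plannedwidth_m}{16}$, on the time horizon $T_m$ — which, by choosing $c$ just above $(32\varepsilon)^{-1}$ and discarding an exponentially small event, contains the crossing of the right face — the coordinate $X_{\cdot,1}$ has expected displacement at most $(1+o(1))\frac{\plannedwidth_m}{16}$ and, by a maximal inequality for the associated martingale, fluctuation of order $\sqrt{\plannedwidth_m/\varepsilon}$; hence $\sup_{t\le T_m}X_{t,1}\le\frac{\plannedwidth_m}{16}+\frac{\plannedwidth_m}{16}+O(\sqrt{\plannedwidth_m})<\frac{3\plannedwidth_m}{16}$, the buffer $\frac{\plannedwidth_m}{16}$ swamping the fluctuation term, again with failure probability exponentially small in $\plannedwidth_m$. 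Making this simultaneous control — crossing of the right face within $T_m$, while $-\frac{\plannedwidth_m}{16}<\inf_{t\le T_m}X_{t,1}$ and $\sup_{t\le T_m}X_{t,1}<\frac{3\plannedwidth_m}{16}$ — fully rigorous is the one step that needs real care; as in \thref{escapen1,escapen2,escapen4} I would not keep track of the exact exponential rates and would settle for summability in $m$.
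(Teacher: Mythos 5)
Your proposal is correct and follows essentially the route the paper intends: the paper omits this proof, pointing to \thref{escapen1,escapen2}, and indeed the present setting is \thref{escapen2} reflected in the diagonal $u_0=u_1$ (box, start, target, escape set and environment all transpose), so your case analysis --- uniform east drift controls $X_{\cdot,0}$, the excursion bound keeps $X_{\cdot,1}$ from dipping too low, and the equal-drift time comparison rules out the north escape --- is the intended argument. One small slip: you place the west escape at $u_0=-\frac{\plannedwidth_m}8+1$, but that quantity is the southern $u_1$-bound of $\mathcal B_m^{\mathtt B}$; the west face actually sits at $u_0=-\frac{\plannedwidth_m}{16}-1$, which is still $\Theta(\plannedwidth_m)$ from the starting column $u_0=0$, so your exponential estimate applies unchanged.
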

 \begin{lem}\thlabel{escapen6}
  $P_{v_m}^{\omega_{\nearrow}}(\tau_{\mathcal E_m^{\mathtt C}}<\tau_{\mathcal T_m^{\mathtt C}})$ is summable in $m$ for any arbitrary sequence $v_m\in\mathcal S_m^{\mathtt C}$.
 \end{lem}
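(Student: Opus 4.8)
The plan is to follow the scheme of the proofs of \thref{escapen1} and \thref{escapen2}: split the failure event $\{\tau_{\mathcal E_m^{\mathtt C}}<\tau_{\mathcal T_m^{\mathtt C}}\}$ into finitely many ``modes of escape'' and bound each of them by a quantity that is super-exponentially small in $m$ (hence summable), using only the gambler's-ruin estimate for a drifted one-dimensional nearest-neighbour walk together with Azuma's maximal inequality. The only features of the environment that will enter are that, under $P^{\omega_\nearrow}$ with $\omega_\nearrow=\omega_{+,+}$, each coordinate $(X_{t,i})_t$, $i\in\{0,1\}$, is --- after subtracting the deterministic drift $2\varepsilon t$ --- a martingale with increments bounded by $2$, and that the two drifts coincide and are strictly positive.

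First I would check that $\partial\mathcal B_m^{\mathtt C}$ is the disjoint union of $\mathcal E_m^{\mathtt C}$ and $\mathcal T_m^{\mathtt C}$, so that $\{\tau_{\mathcal E_m^{\mathtt C}}<\tau_{\mathcal T_m^{\mathtt C}}\}$ is exactly the event that the first visit of $X_\cdot$ to $\partial\mathcal B_m^{\mathtt C}$ lands in $\mathcal E_m^{\mathtt C}$; equivalently, $X_\cdot$ reaches the left wall $\{x=-1\}$, the bottom wall $\{y=-1\}$, or the top wall $\{y=\plannedwidth_{m+1}/2+3\plannedwidth_m/16+1\}$ of $\mathcal B_m^{\mathtt C}$ before reaching the right wall $\{x=\plannedwidth_{m+1}/4-1\}$, or it reaches the right wall first but at height $\le 3\plannedwidth_m/16-1$. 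The left-wall and bottom-wall modes are disposed of immediately: starting from $x$-coordinate $\plannedwidth_m/16\ge0$, resp.\ $y$-coordinate $\ge 3\plannedwidth_m/16$, the gambler's-ruin bound gives that $(X_{t,0})$, resp.\ $(X_{t,1})$, ever reaches $-1$ with probability at most $\rho^{\plannedwidth_m/16+1}$ for some $\rho=\rho(\varepsilon)<1$, and $\plannedwidth_m=m!^2$ makes this summable in $m$.

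The substantive part is the pair of modes involving the right and top walls, and here the calibration of a deterministic time horizon $T$ is the delicate point. I would choose $T$ so that $2\varepsilon T$ lies strictly between the horizontal span $\approx\plannedwidth_{m+1}/4$ and the vertical span $\approx\plannedwidth_{m+1}/2$ of $\mathcal B_m^{\mathtt C}$, at distance $\Theta(\plannedwidth_{m+1})$ from each --- this is possible precisely because $\mathcal B_m^{\mathtt C}$ is about twice as tall as it is wide (e.g.\ $2\varepsilon T=5\plannedwidth_{m+1}/16$). Azuma's maximal inequality applied to the two centred martingales on $[0,T]$ then shows that, off an event of probability $\le 4\exp(-c\varepsilon\plannedwidth_{m+1})$, one has $|X_{t,i}-v_{m,i}-2\varepsilon t|\le\plannedwidth_{m+1}/16$ for all $t\le T$ and $i\in\{0,1\}$. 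On the complementary good event: $X_{T,0}$ already exceeds $\plannedwidth_{m+1}/4-1$, so by the intermediate-value property of a nearest-neighbour path the right wall is reached at some $\tau_0\le T$; $X_{t,1}$ stays below $\plannedwidth_{m+1}/2+3\plannedwidth_m/16+1$ throughout $[0,T]\supseteq[0,\tau_0]$, so the top wall is not reached first; and since the net horizontal displacement at $\tau_0$ equals $\plannedwidth_{m+1}/4-1-\plannedwidth_m/16$, one gets $2\varepsilon\tau_0\ge\plannedwidth_{m+1}/8$, hence $X_{\tau_0,1}\ge v_{m,1}+2\varepsilon\tau_0-\plannedwidth_{m+1}/16\ge 3\plannedwidth_m/16$, i.e.\ the walk in fact leaves $\mathcal B_m^{\mathtt C}$ through $\mathcal T_m^{\mathtt C}$. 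Thus both of these modes are contained in the bad event.

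Summing the at most four contributions yields $P_{v_m}^{\omega_\nearrow}(\tau_{\mathcal E_m^{\mathtt C}}<\tau_{\mathcal T_m^{\mathtt C}})\le 2\rho^{\plannedwidth_m/16+1}+4\exp(-c\varepsilon\plannedwidth_{m+1})$, which is summable (indeed super-exponentially small) in $m$. I expect the only genuinely non-routine steps to be the disjointness check $\partial\mathcal B_m^{\mathtt C}=\mathcal E_m^{\mathtt C}\sqcup\mathcal T_m^{\mathtt C}$ and the calibration of $T$ against the aspect ratio of $\mathcal B_m^{\mathtt C}$ (together with ruling out the ``exit through the right wall at too low a height'' mode, which relies on the vertical coordinate necessarily drifting up by an amount comparable to the horizontal distance travelled, the two drifts being equal). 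Everything else is identical in spirit to \thref{escapen1,escapen2}, and since all relevant lengths are $\Theta(\plannedwidth_{m+1})=\Theta((m+1)!^2)$ there is no need to track constants.
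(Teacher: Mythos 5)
Your proof is correct and fills in the paper's deliberate omission (the paper states only that escapen3--escapen6 follow ``the same as in the two preceding proofs''); your Azuma-with-deterministic-horizon argument is a clean and precise version of the time-and-speed reasoning in the bullet points of the proof of \thref{escapen2}, which is the applicable template here (the excursion decomposition from \thref{escapen1} is unavailable because $\omega_\nearrow$ gives $X_{t,0}$ a positive rather than recalling drift). A small point: the disjointness $\partial\mathcal B_m^{\mathtt C}=\mathcal E_m^{\mathtt C}\sqcup\mathcal T_m^{\mathtt C}$ you flag as non-routine is in fact automatic, since the paper defines $\mathcal T_m^\dagger:=\partial\mathcal B_m^\dagger\setminus\mathcal E_m^\dagger$.
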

 The arguments needed for the proofs of these last four \thnameref{escapen3}ta are the same as in the two preceeding proofs, which is why we omit them here. 
\begin{figure}[tbh]
\begin{center}
\begin{tikzpicture}[scale=0.6]
\def\escapedecor{[red,decorate,decoration={zigzag,segment length=1.5mm,amplitude=0.5mm}]}
\def\uebergang[#1]#2{\begin{tikzpicture}[rotate=#1,xscale=#2, scale=0.5]\draw[<->](-0.19,0)--(0.75,0);\draw[<->](0,-0.19)--(0,0.75); \filldraw[fill=black](0,0)circle(0.05cm);\end{tikzpicture}}
\begin{scope}
    \def\oneeigthm{1.2}
    \def\tothealpha{14}
    \draw[<->](-\oneeigthm,0)+(-0.5,0)--(\tothealpha+0.5,0);
    \draw[<->](0,-\oneeigthm-0.5)--(0,\oneeigthm+0.5);
    \draw (0,0) node[anchor=north east]{\strut$0$};
    \draw[ultra thick](0,0)--(\oneeigthm*8,0) +(-1,0)node[anchor=north east]{$\mathcal S_m^{\mathtt A}$};
    \draw[thick](-\oneeigthm,0)node[anchor=north east]{\strut$-\frac{\plannedwidth_m}{16}$}+(0,0.2) -- +(0,-0.2);
    \draw[thick](8*\oneeigthm,0)node[anchor=north west]{\strut$\frac{\plannedwidth_m}2$}+(0,0.2) -- +(0,-0.2);
    \draw[thick](\tothealpha,0) node[anchor=north east]{\strut$n$}+(0,0.2) -- +(0,-0.2);
    \draw[thick](0,-\oneeigthm-0.1) node[anchor=north west]{\strut$-\frac{\plannedwidth_m}{16}+1$}(0,-\oneeigthm)+(0.2,0) -- +(-0.2,0);
    \draw[thick](0,\oneeigthm+0.1) node[anchor=south west]{\strut$\frac{\plannedwidth_m}{16}$}(0,-\oneeigthm)+(0.2,0) -- +(-0.2,0);
    \draw[very thin](-\oneeigthm,-\oneeigthm) rectangle (\tothealpha,\oneeigthm);
    \draw\escapedecor(-\oneeigthm-0.1,-\oneeigthm)--(-\oneeigthm-0.1,\oneeigthm);
    \draw\escapedecor(-\oneeigthm,\oneeigthm+0.1)--(\tothealpha,\oneeigthm+0.1);
    \draw\escapedecor(-\oneeigthm,-\oneeigthm-0.1)--(\tothealpha,-\oneeigthm-0.1);
    \draw[green](\tothealpha+0.1,-\oneeigthm)--(\tothealpha+0.1,\oneeigthm);
    \draw(\tothealpha,1)node[anchor=west]{$\mathcal T_m^{\mathtt A}(n)$};
    \draw(5,-\oneeigthm-0.1)node[anchor=north]{$\mathcal E_m^{\mathtt A}(n)$};
    \draw(4,0.0)node[anchor=south west]{\uebergang[180]{-1}};
    \draw(4,-1.2)node[anchor=south west]{\uebergang[270]{-1}};
\end{scope}
\begin{scope}[shift={(1,-12.5)}]
      \def\onen{3}
      \fill[fill=green!20!white](0,-\onen) -- (0,\onen) -- (\onen,2*\onen) -- (\onen,0) -- cycle;
      \draw[<->](-\onen,0)+(-0.5,0)--(\onen+0.5,0);
      \draw[<->](0,-2*\onen-0.5)--(0,3*\onen+0.5);
      \draw (0,0) node[anchor=north east]{\strut$0$};
      \draw[thick](-\onen,0) node[anchor=north east]{\strut$-\frac{\plannedwidth_m}{16}$}+(0,0.2) -- +(0,-0.2);
      \draw[thick](\onen,0) node[anchor=north west]{\strut$\frac{\plannedwidth_m}{16}-1$}+(0,0.2) -- +(0,-0.2);
      \draw[thick](0,-2*\onen) node[anchor=south east]{\strut$-\frac{\plannedwidth_m}8+1$}+(0.2,0) -- +(-0.2,0);
      \draw[thick](0,-\onen) node[anchor=east]{\strut$-\frac{\plannedwidth_m}{16}+1$}+(0.2,0) -- +(-0.2,0);
      \draw[thick](0,\onen) node[anchor=east]{\strut$\frac{\plannedwidth_m}{16}$}+(0.2,0) -- +(-0.2,0);
      \draw[thick](0,3*\onen) node[anchor=north east]{\strut$\frac{3\plannedwidth_m}{16}$}+(0.2,0) -- +(-0.2,0);
      \draw[very thin](-\onen,-2*\onen) rectangle (\onen,3*\onen);
      \draw[ultra thick](0,-\onen)--(0,\onen)node[anchor=north east]{};
      \draw[green](\onen.1,-\onen)--(\onen.1,3*\onen);
      \draw\escapedecor(\onen.1,-2*\onen)--(\onen+0.1,-\onen);
      \draw\escapedecor(-\onen,-2*\onen-0.1)--(\onen,-2*\onen-0.1);
      \draw\escapedecor(-\onen.1,-2*\onen)--(-\onen.1,3*\onen);
      \draw\escapedecor(-\onen,3*\onen+0.1)--(\onen,3*\onen+0.1); 
      \draw(0,1.5)node[anchor=east]{$\mathcal S_m^{\mathtt B}$};
      \draw(\onen.1,1.5)node[anchor=west]{$\mathcal T_m^{\mathtt B}$};
      \draw(-\onen-0.1,1.5)node[anchor=east]{$\mathcal E_m^{\mathtt B}$};
      \draw(1,2)node[anchor=north west]{\uebergang[0]{1}};
      \draw(1,-2)node[anchor=south west]{\uebergang[0]{1}};
      \draw(-2.5,2)node[anchor=north west]{\uebergang[180]{-1}};
      \draw(-2.5,-2)node[anchor=south west]{\uebergang[270]{-1}};
\end{scope}
\begin{scope}[shift={(9,-17.5)}]
       \def\onehalf{12}
       \def\onefourth{6}
       \def\minusoneeighth{0.5}
       \def\threeminusoneeighth{1.5}
       \fill[fill=green!20!white](\minusoneeighth,\minusoneeighth*3)--(\minusoneeighth,\minusoneeighth*7) -- (\onefourth,\minusoneeighth*6+\onefourth) -- (\onefourth,\minusoneeighth*2+\onefourth) -- cycle;
       \draw[->](-0.2,0)--(\onefourth.5,0);
       \draw[->](0,-.2)--(0,2*\onefourth+3*\minusoneeighth+0.5);
       \draw (0,0) node[anchor=north east]{\strut$0$};
       \draw[very thin](0,0) rectangle (\onefourth,2*\onefourth+3*\minusoneeighth);
       \draw[ultra thick](\minusoneeighth,\minusoneeighth*3)--(\minusoneeighth,\minusoneeighth*7)node[anchor=north west]{$\mathcal S_m^{\mathtt C}$};
       \draw\escapedecor(0,-0.1)--(\onefourth,-0.1);
       \draw\escapedecor(-0.1,0)--(-0.1,2*\onefourth+3*\minusoneeighth);
       \draw\escapedecor(0,2*\onefourth+3*\minusoneeighth+0.1)--(\onefourth,2*\onefourth+3*\minusoneeighth+0.1);
       \draw\escapedecor(\onefourth+0.1,0)--(\onefourth+0.1,\minusoneeighth*3);
       \draw[green](\onefourth+0.1,\minusoneeighth*3)--(\onefourth+0.1,2*\onefourth+3*\minusoneeighth);
       \draw[thick](\minusoneeighth,-0.1) node[anchor=north]{\strut$\frac{\plannedwidth_m}{16}$}(\minusoneeighth,0)+(0,0.2) -- +(0,-0.2);
       \draw[thick](-0.1,\minusoneeighth*3) node[anchor=east]{$\frac{3\plannedwidth_m}{16}$}(0,\minusoneeighth*3)+(0.2,0) -- +(-0.2,0);
       \draw[thick](-0.1,\minusoneeighth*7) node[anchor=east]{$\frac{7\plannedwidth_m}{16}-1$}(0,\minusoneeighth*7)+(0.2,0) -- +(-0.2,0);
       \draw[thick](\onefourth,-0.1) node[anchor=north]{\strut$\frac{\plannedwidth_{m+1}}4-2$}(\onefourth,0)+(0,0.2) -- +(0,-0.2);
       \draw[thick](-0.1,2*\onefourth+3*\minusoneeighth) node[anchor=east]{$\frac{\plannedwidth_{m+1}}2+\frac{3\plannedwidth_m}{16}$}(0,2*\onefourth+3*\minusoneeighth)+(0.2,0) -- +(-0.2,0);
       \draw(\onefourth+0.1,3)node[anchor=west]{$\mathcal T_m^{\mathtt C}$};
       \draw(-0.1,\onefourth)node[anchor=east]{$\mathcal E_m^{\mathtt C}$};
       \draw(3,3)node[]{\uebergang[0]{1}};
\end{scope}
\end{tikzpicture}
\caption{Escape and target sets used in \thref{escapen4,escapen5,escapen6}.}\label{escapeevents2}
\end{center}
\end{figure}
\subsection{Proof of the Theorem}
We prove \thref{Xtoinfty} by showing that the random walk has positive probability to hit a certain sequence of target sets leading to infinity in a prescribed order, while not hitting the succession of escape--sets we define at the same time. The sets will be based on the ones who have just been treated in the \thref{escapen1,escapen2,escapen3,escapen4,escapen5,escapen6}. 
\begin{dfn}\thlabel{origins}
 We will shift the sets defined in \thref{manysets} by the vectors
 \begin{align}
  \mathcal O_m^{\mathtt a}&:=\UR\Lane_{+,+}(B_{m-1}^1)+(\frac{\plannedwidth_m}4,-\frac{\plannedwidth_{m-1}}{16}+1),\\
  \mathcal O_m^{\mathtt b}&:=\UR\Lane_{+,+}(B_m^0)+e_1,\\
  \mathcal O_m^{\mathtt c}&:=\UR\Lane_{+,+}(B_m^0)+(-\frac{\plannedwidth_m}4+1,1),\\
  \mathcal O_m^{\mathtt A}&:=\UR\Lane_{+,+}(B_m^0)+(-\frac{\plannedwidth_m}{16}+1,\frac{\plannedwidth_m}4),\\
  \mathcal O_m^{\mathtt B}&:=\UR\Lane_{+,+}(B_m^1)+e_0,\\
  \mathcal O_m^{\mathtt C}&:=\UR\Lane_{+,+}(B_m^1)+(1,-\frac{\plannedwidth_m}4+1),m\ge5;
 \end{align}
 ``$\mathcal O$'' stands for the shifted ``Origin''. 

 Also define 
 \begin{align}
  n_m^{\mathtt a}&:=(\UR\Lane_{+,+}(B_m^0))_1-(\UR\Lane_{+,+}(B_{m-1}^1))_1+\frac{\plannedwidth_{m-1}}{16}-1,\\
  n_m^{\mathtt A}&:=(\UR\Lane_{+,+}(B_m^1))_0-(\UR\Lane_{+,+}(B_m^0))_0+\frac{\plannedwidth_m}{16}-1,\ m\ge5.
 \end{align}
\end{dfn}
The next lemma shows how each shifted target set coincides with the next shifted starting set. 
\begin{lem}\thlabel{targetequalsstart}
 \begin{align}
  \mathcal T_m^{\mathtt a}(n_m^{\mathtt a})+\mathcal O_m^{\mathtt a}&=\mathcal S_m^{\mathtt b}+\mathcal O_m^{\mathtt b},\\
  \mathcal T_m^{\mathtt b}+\mathcal O_m^{\mathtt b}&=\mathcal S_m^{\mathtt c}+\mathcal O_m^{\mathtt c},\\
  \mathcal T_m^{\mathtt c}+\mathcal O_m^{\mathtt c}&=\mathcal S_m^{\mathtt A}+\mathcal O_m^{\mathtt A},\\
  \mathcal T_m^{\mathtt A}(n_m^{\mathtt A})+\mathcal O_m^{\mathtt A}&=\mathcal S_m^{\mathtt B}+\mathcal O_m^{\mathtt B},\\
  \mathcal T_m^{\mathtt B}+\mathcal O_m^{\mathtt B}&=\mathcal S_m^{\mathtt C}+\mathcal O_m^{\mathtt C},\\
  \mathcal T_m^{\mathtt C}+\mathcal O_m^{\mathtt C}&=\mathcal S_{m+1}^{\mathtt a}+\mathcal O_{m+1}^{\mathtt a}.
 \end{align}
\end{lem}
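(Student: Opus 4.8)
The statement is a list of six set-identities asserting that, after shifting by the appropriate vectors $\mathcal O_m^\cdot$, the target set of one box-escape scenario coincides exactly with the start set of the next. The plan is to verify each identity directly by unwinding the definitions of $\mathcal S_m^\cdot,\mathcal T_m^\cdot,\mathcal O_m^\cdot$ from \thref{manysets}, the definition block containing $\mathcal B_m^{\mathtt b},\dots,\mathcal T_m^{\mathtt C}$, and \thref{origins}, together with \thref{evstreetsjoin} which pins down how the streets $B_m^0$ and $B_m^1$ are positioned relative to one another. The key point to keep in mind throughout is that every set involved is a segment of $\Z^2$ parallel to a coordinate axis (an interval times a point, by the $\Btwn$-formulas), so each identity reduces to checking that the fixed coordinate matches and that the two endpoints of the varying coordinate match.

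First I would record the abbreviations $w_m^0:=\UR\Lane_{+,+}^{\SG}(B_m^0)$ and $w_m^1:=\UR\Lane_{+,+}^{\SG}(B_m^1)$, and note from the definition of $\Lane_{+,+}$ (with $k=\SL{\levelof(B)}$) together with \thref{widthandlength} (width $\ge\plannedwidth_m$, hence the lanes have full width $\plannedwidth_m/4$) that $w_m^0$ is the upper right corner of a lane of horizontal extent $\plannedwidth_m/4$ lying against the lower face of the vertical street $B_m^0$, and symmetrically for $w_m^1$. Then I would take the six identities one at a time. For the first, $\mathcal T_m^{\mathtt a}(n_m^{\mathtt a})=\Btwn((-\frac{\plannedwidth_m}{16}+1,n_m^{\mathtt a}+1),(\frac{\plannedwidth_m}{16},n_m^{\mathtt a}+1))$ shifted by $\mathcal O_m^{\mathtt a}=w_{m-1}^1+(\frac{\plannedwidth_m}4,-\frac{\plannedwidth_{m-1}}{16}+1)$ must equal $\mathcal S_m^{\mathtt b}=\Btwn((-\frac{\plannedwidth_m}{16}+1)e_0,\frac{\plannedwidth_m}{16}e_0)$ shifted by $\mathcal O_m^{\mathtt b}=w_m^0+e_1$; since the $0$-coordinate intervals are literally identical ($\{-\frac{\plannedwidth_m}{16}+1,\dots,\frac{\plannedwidth_m}{16}\}$ in both), what has to be checked is the single scalar equation in the $1$-coordinate, and this is exactly what the definition $n_m^{\mathtt a}=(w_m^0)_1-(w_{m-1}^1)_1+\frac{\plannedwidth_{m-1}}{16}-1$ was rigged to produce; I would substitute and cancel. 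The remaining five work the same way: in each case one of the two coordinate-directions gives an identity of intervals by inspection, and the other gives a single linear equation that holds by the explicit choice of the $\mathcal O_m^\cdot$ (and, for the $\mathtt A$ case, the choice of $n_m^{\mathtt A}$, and for the $\mathtt C\to\mathtt a$ case at level $m+1$, the street-joining relation $\Srnd(\UR B_m^1+e_0)=B_{m+1}^0$ from \thref{evstreetsjoin}).

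The one place that needs genuine care rather than bookkeeping is the last identity, $\mathcal T_m^{\mathtt C}+\mathcal O_m^{\mathtt C}=\mathcal S_{m+1}^{\mathtt a}+\mathcal O_{m+1}^{\mathtt a}$, because it crosses from superlevel $m$ to superlevel $m+1$ and therefore mixes the two width-scales $\plannedwidth_m=m!^2$ and $\plannedwidth_{m+1}=(m+1)!^2$, and because it is the one that invokes how $B_{m+1}^0$ sits relative to $B_m^1$. Here I would use \thref{evstreetsjoin} to locate $\UR B_m^1$ (equivalently $w_m^1$, which differs from $\UR B_m^1$ only by the known lane-width offsets) and then \thref{widthandlength} to know that $B_{m+1}^0$ has width at least $\plannedwidth_{m+1}$, so that the lane $\Lane_{+,+}(B_{m+1}^0)$ and hence $w_{m+1}^0$ are where the formulas predict; then $\mathcal O_{m+1}^{\mathtt a}=w_m^1+(\frac{\plannedwidth_{m+1}}4,-\frac{\plannedwidth_m}{16}+1)$ and $\mathcal O_m^{\mathtt C}=w_m^1+(1,-\frac{\plannedwidth_m}4+1)$, combined with the explicit $\mathcal T_m^{\mathtt C}$ and $\mathcal S_{m+1}^{\mathtt a}$ from the definitions, collapses to an arithmetic identity among $\plannedwidth_m,\plannedwidth_{m+1}$ which I would verify by direct substitution. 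I expect this final case to be the main obstacle only in the bookkeeping sense — making sure every fraction of $\plannedwidth_m$ versus $\plannedwidth_{m+1}$ lands correctly — and otherwise the whole lemma is a routine, if lengthy, check that the offset vectors $\mathcal O_m^\cdot$ were defined consistently. Accordingly I would present the proof compactly: prove one representative identity in full detail (the first, or the last), and remark that the other five follow by the same direct substitution of the definitions from \thref{manysets}, \thref{origins}, and \thref{evstreetsjoin}.
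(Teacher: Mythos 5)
Your overall strategy---unwind the definitions from \thref{manysets} and \thref{origins} and substitute, proving one line in detail and declaring the rest similar---is exactly the paper's. But you have the role of \thref{evstreetsjoin} reversed, and your plan as written would leave a gap. For the first identity you claim ``the $0$-coordinate intervals are literally identical \ldots\ what has to be checked is the single scalar equation in the $1$-coordinate.'' The \emph{unshifted} intervals are identical, yes, but $\mathcal O_m^{\mathtt a}$ is anchored at $\UR\Lane_{+,+}(B_{m-1}^1)$ while $\mathcal O_m^{\mathtt b}$ is anchored at $\UR\Lane_{+,+}(B_m^0)$---two lane corners on two \emph{distinct} streets---so $(\mathcal O_m^{\mathtt a})_0=(\mathcal O_m^{\mathtt b})_0$ is itself a nontrivial geometric fact. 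It boils down to $b_0(B_m^0)=(\UR B_{m-1}^1)_0+1$, which is exactly what the second assertion of \thref{evstreetsjoin}, namely $\Srnd(\UR B_{m-1}^1+e_0)=B_m^0$, supplies once combined with the disjointness of streets; the paper uses it silently in the step that replaces $(\UR\Lane_{+,+}(B_{m-1}^1))_0+\frac{\plannedwidth_m}4$ by $(\UR\Lane_{+,+}(B_m^0))_0$. The same thing happens in the $\mathtt A\to\mathtt B$ line, which needs the first assertion $\Srnd(\UR B_m^0+e_1)=B_m^1$.

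Conversely, the last line $\mathcal T_m^{\mathtt C}+\mathcal O_m^{\mathtt C}=\mathcal S_{m+1}^{\mathtt a}+\mathcal O_{m+1}^{\mathtt a}$, which you single out as the one that ``invokes how $B_{m+1}^0$ sits relative to $B_m^1$,'' does not use \thref{evstreetsjoin} at all: by \thref{origins}, $\mathcal O_m^{\mathtt C}$ and $\mathcal O_{m+1}^{\mathtt a}$ are both offsets of the \emph{same} point $\UR\Lane_{+,+}(B_m^1)$ (you even wrote this out correctly in your formulas), so that identity is pure arithmetic in $\plannedwidth_m$ and $\plannedwidth_{m+1}$ and never references $B_{m+1}^0$ or $w_{m+1}^0$. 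So the two identities where both coordinates genuinely need checking and where \thref{evstreetsjoin} carries the load are $\mathtt a\to\mathtt b$ and $\mathtt A\to\mathtt B$, not $\mathtt C\to\mathtt a$. The fix is simple---verify both coordinates in every line, invoking \thref{evstreetsjoin} precisely when the two $\mathcal O$'s are anchored on different streets---but as stated your plan skips the one check that carries the geometric content.
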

\begin{proof}
 We prove the first line, the others being similar. 
  \begin{align}
  \MoveEqLeft\mathcal T_m^{\mathtt a}(n_m^{\mathtt a})+\mathcal O_m^{\mathtt a}\\
   &=\Btwn\big((-\frac{\plannedwidth_m}{16}+1,n_m^{\mathtt a}+1),(\frac{\plannedwidth_m}{16},n_m^{\mathtt a}+1)\big)+\UR\Lane_{+,+}(B_{m-1}^1)+(\frac{\plannedwidth_m}4,-\frac{\plannedwidth_{m-1}}{16}+1)\\
   &=\Btwn\big((-\frac{\plannedwidth_m}{16}+1,(\UR\Lane_{+,+}(B_m^0))_1-(\UR\Lane_{+,+}(B_{m-1}^1))_1+\frac{\plannedwidth_{m-1}}{16}),\\
   &\qquad\qquad\qquad\quad(\frac{\plannedwidth_m}{16},(\UR\Lane_{+,+}(B_m^0))_1-(\UR\Lane_{+,+}(B_{m-1}^1))_1+\frac{\plannedwidth_{m-1}}{16})\big)\\
   &\qquad\qquad\qquad\qquad\qquad\qquad\qquad\qquad\qquad+\UR\Lane_{+,+}(B_{m-1}^1)+(\frac{\plannedwidth_m}4,-\frac{\plannedwidth_{m-1}{16}}8+1)\\
   &=\Btwn\big(((\UR\Lane_{+,+}(B_{m-1}^1))_0+\frac{\plannedwidth_m}4-\frac{\plannedwidth_m}{16}+1,(\UR\Lane_{+,+}(B_m^0))_1+1),\\
   &\qquad\qquad\quad((\UR\Lane_{+,+}(B_{m-1}^1))_0+\frac{\plannedwidth_m}4+\frac{\plannedwidth_m}{16},(\UR\Lane_{+,+}(B_m^0))_1+1)\big)\\
   &=\Btwn\Big(\big((\UR\Lane_{+,+}(B_m^0))_0-\frac{\plannedwidth_m}{16}+1,(\UR\Lane_{+,+}(B_m^0))_1+1\big),\\
   &\qquad\qquad\qquad\big((\UR\Lane_{+,+}(B_m^0))_0+\frac{\plannedwidth_m}{16},(\UR\Lane_{+,+}(B_m^0))_1+1\big)\Big)\\
   &=\Btwn\big((-\frac{\plannedwidth_m}{16}+1)e_0,\frac{\plannedwidth_m}{16}e_0\big)+\UR\Lane_{+,+}(B_m^0)+e_1=\mathcal S_m^{\mathtt b}+\mathcal O_m^{\mathtt b}.
 \end{align}
\end{proof}
\begin{proof}[Proof of \thref{Xtoinfty}] 
 Out of convenience, we set 
 \begin{equation}
  \mathcal T_m^\dagger:=\mathcal T_m^\dagger(n_m^\dagger),\ \mathcal E_m^\dagger:=\mathcal E_m^\dagger(n_m^\dagger),\ \mathcal B_m^\dagger:=\mathcal B_m^\dagger(n_m^\dagger),\ \dagger\in\{\text{``$\mathtt a$'',``$\mathtt A$''}\},\ m\ge5,
 \end{equation}
 and $\ABC:=\{\text{``$\mathtt a$'',``$\mathtt b$'',``$\mathtt c$'',``$\mathtt A$'',``$\mathtt B$'',``$\mathtt C$''}\}$. Also define the initial target-- and escape--sets 
 \begin{equation}
  \mathcal T^0:=\mathcal S_{M+1}^{\mathtt a}\mathcal O_{M+1}^{\mathtt a}\text{ and }\mathcal E^0:=\big(\partial\Btwn(0,\UR\mathcal T^0-e_0)\big)\setminus\mathcal T^0.
 \end{equation}
 The event 
 \begin{equation}
  \{\tau_{\mathcal T^0}<\tau_{\mathcal E^0}\}\cap\bigcap_{m\ge M+1}\bigcap_{\dagger\in\ABC}\big\{\tau(\mathcal S_m^\dagger+\mathcal O_m^\dagger,\mathcal T_m^\dagger+\mathcal O_m^\dagger)<\tau(\mathcal S_m^\dagger+\mathcal O_m^\dagger,\mathcal E_m^\dagger+\mathcal O_m^\dagger)\big\}
 \end{equation}
 implies $X_t\cdot\vec1\to\infty$, $t\to\infty$: it describes the path of a random walk that hits a target set, from this target set moves to the next target set, and so on. As, roughly speaking, these target sets ``lead to infinity in the direction of the vector $\vec1=(1,1)$'', they help describing a path of a random walk the scalar product with $\vec 1$ of which is diverging to $+\infty$. A picture of a piece of such a path with the corresponding target sets is available in Figure \ref{pathandtargets}.
 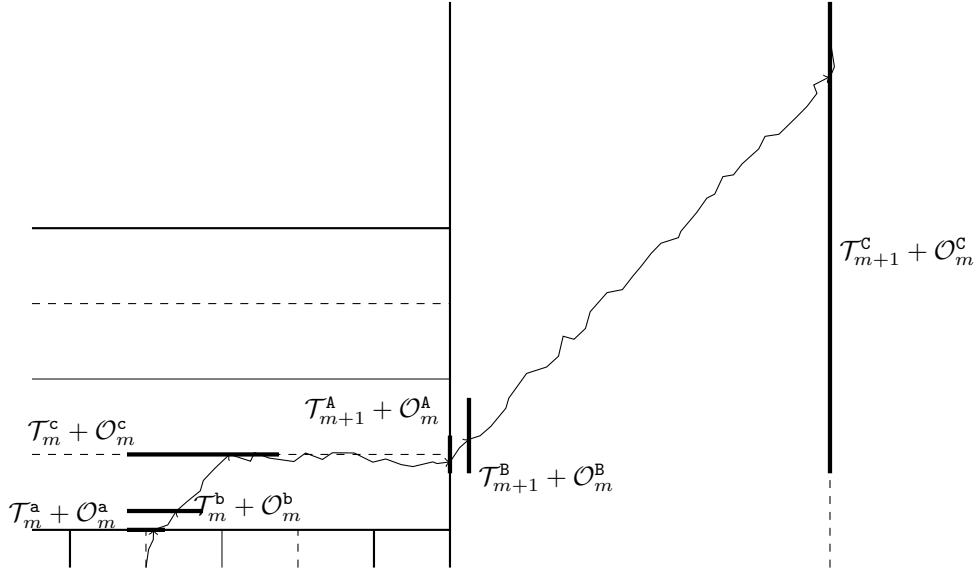
\begin{figure}[htb]
  \begin{center}
   \begin{tikzpicture}
    \foreach \shifter in {-1cm,3cm}\draw[xshift=\shifter,thick](0,-0.5)--(0,0);
    \foreach \shifter in {0cm,2cm}\draw[xshift=\shifter,dashed](0,-0.5)--(0,0);
    \foreach \shifter in {1cm}\draw[xshift=\shifter, very thin](0,-0.5)--(0,0);
    \foreach \shifter in {0cm,4cm}\draw[yshift=\shifter,thick](-1.5,0)--(4,0);
    \foreach \shifter in {2cm}\draw[yshift=\shifter, very thin](-1.5,0)--(4,0);
    \foreach \shifter in {1cm,3cm}\draw[yshift=\shifter,dashed] (-1.5,0)--(4,0);
    \foreach \shifter in {0cm}\draw[xshift=\shifter,thick](4,-0.5)--(4,7);
    \foreach \shifter in {5cm}\draw[xshift=\shifter,dashed](4,-0.5)--(4,7);
    \begin{scope}[decoration={random steps,segment length=2mm}]
     \draw[decorate,->](0,-0.5)--(0.1,0);
     \draw[decorate,->](0.1,0)--(0.4,0.25);
     \draw[decorate,->](0.4,0.25)--(1.1,1);
     \draw[decorate,->](1.1,1)--(4,0.9);
     \draw[decorate,->](4,0.9)--(4.25,1.2);
     \draw[decorate,->](4.25,1.2)--(9,6);
     \draw[decorate](9,6)--(9,7);
    \end{scope}
    \draw[ultra thick](-0.25,0)--(0.25,0)(-0.25,0.25)--(0.75,0.25)(-0.25,1)--(1.75,1)(4,0.75)--(4,1.25)(4.25,0.75)--(4.25,1.75)(9,0.75)--(9,7);
    \draw(-0.25,-0.1)node[anchor=south east]{$\mathcal T_m^{\mathtt a}+\mathcal O_m^{\mathtt a}$};
    \draw(0.5,0)node[anchor=south west]{$\mathcal T_m^{\mathtt b}+\mathcal O_m^{\mathtt b}$};
    \draw(0,1)node[anchor=south east]{$\mathcal T_m^{\mathtt c}+\mathcal O_m^{\mathtt c}$};
    \draw(4,1.25)node[anchor=south east]{$\mathcal T_{m+1}^{\mathtt A}+\mathcal O_m^{\mathtt A}$};
    \draw(4.25,1)node[anchor=north west]{$\mathcal T_{m+1}^{\mathtt B}+\mathcal O_m^{\mathtt B}$};
    \draw(9,4)node[anchor=north west]{$\mathcal T_{m+1}^{\mathtt C}+\mathcal O_m^{\mathtt C}$};
   \end{tikzpicture}
  \end{center}
  \caption{Target areas. The path has positive probability to hit them in that order.}\label{pathandtargets}
 \end{figure}

 With the help of \thref{targetequalsstart}, we can successively apply the strong Markov property for $X_\cdot$, and see that $P$--a.s.,
 \begin{align}
  \MoveEqLeft P_0^\omega(X_t\cdot\vec1\xrightarrow[t\to\infty]{}\infty)\\
  &\ge P_0^\omega\Big(\{\tau_{\mathcal T^0}<\tau_{\mathcal E^0}\}\cap\bigcap_{m\ge M+1}\bigcap_{\dagger\in\ABC}\big\{\tau(\mathcal S_m^\dagger+\mathcal O_m^\dagger,\mathcal T_m^\dagger+\mathcal O_m^\dagger)<\tau(\mathcal S_m^\dagger+\mathcal O_m^\dagger,\mathcal E_m^\dagger+\mathcal O_m^\dagger)\big\}\Big)\\
  &=P_0^\omega\big(\tau_{\mathcal T^0}<\tau_{\mathcal E^0}\big)\prod_{m\ge M+1}\smashoperator[r]{\prod_{\dagger\in\ABC}}P_0^\omega\Big(\tau\big(\mathcal S_m^\dagger+\mathcal O_m^\dagger,\mathcal T_m^\dagger+\mathcal O_m^\dagger\big)<\tau\big(\mathcal S_m^\dagger+\mathcal O_m^\dagger,\mathcal E_m^\dagger+\mathcal O_m^\dagger\big)\Big).
 \end{align}
 Because of the ellipticity of the random environment, and because $M$ from \eqref{dfnM} is $P$--a.s.\ finite, the first probability on the right hand side is strictly larger than $0$.

 The product being larger than $0$ is thus equivalent to 
 \begin{align}
  \sum_{\dagger\in\ABC}\smashoperator[r]{\sum_{m\ge M+1}}P_0^\omega\Big(\tau\big(\mathcal S_m^\dagger+\mathcal O_m^\dagger,\mathcal T_m^\dagger+\mathcal O_m^\dagger\big)>\tau\big(\mathcal S_m^\dagger+\mathcal O_m^\dagger,\mathcal E_m^\dagger+\mathcal O_m^\dagger\big)\Big)<\infty.
 \end{align}
 The case ``$=$'' cannot occur because the target-- and escape--sets are disjoint.
 Hence, what we need to show is the $P$--almost sure summability in $m$ of 
 \begin{equation}
  P_0^\omega\Big(\tau\big(\mathcal S_m^\dagger+\mathcal O_m^\dagger,\mathcal E_m^\dagger+\mathcal O_m^\dagger\big)<\tau\big(\mathcal S_m^\dagger+\mathcal O_m^\dagger,\mathcal T_m^\dagger+\mathcal O_m^\dagger\big)\Big),\ \dagger\in\ABC.
 \end{equation}
 Let us look at the case $\dagger=\text{``$\mathtt b$''}$. Note that 
 \begin{equation}
  X_t\in\mathcal B_m^{\mathtt b}+\mathcal O_m^{\mathtt b}\text{ for all }t\in\Big\{\tau\big(\mathcal S_m^{\mathtt b}+\mathcal O_m^{\mathtt b}\big),\dots,\big[\tau\big(\mathcal S_m^{\mathtt b}+\mathcal O_m^{\mathtt b},\mathcal T_m^{\mathtt b}+\mathcal O_m^{\mathtt b}\big)\wedge\tau\big(\mathcal S_m^{\mathtt b}+\mathcal O_m^{\mathtt b},\mathcal E_m^{\mathtt b}+\mathcal O_m^{\mathtt b}\big)\big]-1\Big\}.
 \end{equation}
 Also, $\omega(u)=(\theta_{\mathcal O_m^{\mathtt b}}\varpi^{\mathtt b})(u)$ for all $u\in\mathcal B_m^{\mathtt b}+\mathcal O_m^{\mathtt b}$, where $\varpi^{\mathtt b}$ is the one defined in \thref{escapen2}. This is true because of the placements of $\mathcal O_m^{\mathtt b}$, and \thref{widthandlength}. 

 So, the probability is the same as the one in \thref{escapen2}, which yields summability. 

 The other cases in $\ABC$ can be treated the same way using \thref{escapen1,escapen3,escapen4,escapen5,escapen6}; for ``$\mathtt a$'' and ``$\mathtt A$'', we need to remark that $(n_m^{\mathtt a})_m$ and $(n_m^{\mathtt A})_m$ satisfy the necessary conditions.
\end{proof} 


\begin{thebibliography}{BZZ06}

\bibitem[BZZ06]{BramsonZeitouniZerner06}
Maury Bramson, Ofer Zeitouni, and Martin~P.W. Zerner.
\newblock {Shortest spanning trees and a counterexample for random walks in
  random environments.}
\newblock {\em Ann. Probab.}, 34(3):821--856, 2006.

\bibitem[Guo]{guo}
Xiaoqin Guo.
\newblock {On the limiting velocity of random walks in mixing random
  environment.}
\newblock {Preprint.}

\bibitem[HM09]{HaggstromMester09}
Olle H{\"a}ggstr{\"o}m and P{\'e}ter Mester.
\newblock Some two-dimensional finite energy percolation processes.
\newblock {\em Electron. Commun. Probab.}, 14:42--54, 2009.

\bibitem[HS]{HolmesSalisbury11}
Mark Holmes and Thomas~S. Salisbury.
\newblock {Degenerate Random Walks in Random Environment.}
\newblock {Preprint.}

\bibitem[Kal81]{Kalikow81}
Steven~A. Kalikow.
\newblock Generalized random walk in a random environment.
\newblock {\em Ann. Probab.}, 9(5):753--768, 1981.

\bibitem[SZ99]{SznitmanZerner99}
Alain-Sol Sznitman and Martin Zerner.
\newblock A law of large numbers for random walks in random environment.
\newblock {\em Ann. Probab.}, 27(4):1851--1869, 1999.

\bibitem[Zer07]{Zerner07}
Martin~P.W. Zerner.
\newblock {The zero-one law for planar random walks in i.i.d. random
  environments revisited.}
\newblock {\em Electron. Commun. Prob.}, 12:326--335, 2007.

\bibitem[ZM01]{ZernerMerkl01}
Martin~P.W. Zerner and Franz Merkl.
\newblock {A zero-one law for planar random walks in random environment.}
\newblock {\em Ann. Probab.}, 29(4):1716--1732, 2001.

\end{thebibliography}
\end{document}